\newcommand{\ve}{\varepsilon}
\def \div {{\rm div}}
\newtheorem{thm}{Theorem}[section]
\newtheorem{rem}[thm]{Remark}
\newtheorem{prop}[thm]{Proposition}
\newtheorem{lem}[thm]{Lemma}
\newtheorem{cor}[thm]{Corollary}
\newtheorem{conj}[thm]{Conjecture}
\title{Stability of steady states and bifurcation to
	traveling waves in a free boundary model of cell motility}
\author{Leonid Berlyand\thanks{Department of Mathematics, Huck Institutes of Life   Sciences and Materials Research Institute at the  Penn State University,
		University Park, PA, 16802, USA.} \and Volodymyr Rybalko\thanks{B.Verkin Institute for Low Temperature Physics and Engineering of NASU, 
		47 Nauky ave, Khariv 61103, Ukraine.}}
\begin{document}

\maketitle


\begin{abstract}
{We introduce a two-dimensional coupled Hele-Shaw/ Keller-Segel type free boundary model for motility of eukaryotic cells on substrates.
	The key ingredients of this model are the Darcy law for overdamped motion of the cytoskeleton (active) gel and Hele-Shaw type boundary conditions (Young-Laplace equation for pressure and continuity of velocities).
		We first  show that  radially symmetric steady state solutions become unstable and bifurcate  to traveling wave  solutions. 
		Next we  establish linear and nonlinear stability of the steady states. 
		  We show that  linear stability analysis is inconclusive for both steady states and traveling waves. Therefore we  use invariance properties to prove nonlinear stability of steady states.}
\end{abstract}

\section{Introduction}



Motion  of living cells has been the subject of extensive  studies in biology, soft-matter physics and more recently in mathematics.   Living cells     are  primarily driven by cytoskeleton  gel dynamics. The study of  cytoskeleton  gels  led to a recent development of the so-called  
``Active gel physics", see \cite{ProJulJoa2015}.

The key element  of this motion is cell polarity, which enables cells to carry out specialized functions and therefore is a fundamental issue in cell biology.  Also motion of specific cells such as keratocytes  in the cornea   is of medical relevance as they are involved, e.g., in wound healing after eye surgery or injuries.    Moreover keratocytes are perfect for experiments and modeling since they are naturally found on 
flat surfaces, which allows capturing 
the main features of their motion by spatially two dimensional models. The typical
modes of motion of keratocytes in both  cornea and fishscales are rest (no movement at all) or steady motion with
fixed shape, speed, and direction \cite{Ker_etal2008,
BarLeeAllTheMog2015}. That is why it is important  to study the
steady states and traveling waves that describe resting cells  and 
steadily moving cells respectively.

The two leading  mechanisms of cell motion are  protrusion generated by polymerization of actin filaments (more precisely, filamentous 
actin or F-actin) and  contraction  due to myosin motors \cite{Ker_etal2008}.
  Our goal is to study  the contraction driven cell motion  when   polymerization is negligible since it is balanced by depolymerization (complementary work on polymerization without myosin contraction, see  \cite{EtcMeuVoi2017}, \cite{MeuPrivat2018}). To this end we  introduce and  investigate  a 2D model with free boundary that generalizes 1D  Keller-Segel type free boundary model from 
 \cite{RecPutTru2013}, \cite{RecPutTru2015}. 
 Even though the boundary in 1D is simply two points, our analysis shows that several key qualitative  properties established in  \cite{RecPutTru2013}, \cite{RecPutTru2015}  are also observed in 2D.  However, the transition from 1D  to 2D requires addressing new issues  such as modeling and analysis of evolution of the  domain shape. For instance, the  problem on a moving interval of variable length (1D domain with free boundary) is reduced to a problem on a fixed interval via a linear change of variable, whereas in 2D case 
such a reduction requires a much more sophisticated nonlinear change of variables.  

Two-dimensional active gel models with free boundary were introduced in, e.g., \cite{BarLeeAllTheMog2015},  \cite{CalJonJoanPro2008}, \cite{BlaCas2013}.   
The problems in  \cite{CalJonJoanPro2008}, \cite{BlaCas2013}
model the polymerization  driven cell motion   when   myosin contraction is dominated by polymerization, which naturally  complements present work.  Although the model from \cite{BlaCas2013}  looks similar to the classical Hele-Shaw model, the two are  different in some fundamental aspects such as  presence of persistent motion modeled by traveling wave solution. More recently  a 2D model of the intracellular dynamics with fixed cell shape as a disc was introduced and analyzed numerically analytically in \cite{EtcMeuVoi2019}. 

A free boundary 2D model  introduced  and analyzed numerically in   \cite{BarLeeAllTheMog2015} accounts for both polymerization  and myosin contraction. This model was studied analytically in \cite{BerFuhRyb2018} where the traveling wave solutions were established.  It was also shown in \cite{BerFuhRyb2018} that this model reduces   to  the Keller-Segel  system in a free boundary setting. This system  in fixed domains  appears in various chemotaxis models and it has been extensively studied in mathematical literature due to the finite time blow-up phenomenon caused by the cross-diffusion term (\cite{TaoWin2012},p.1903)  in dimensions 2 and higher. The fact that the minimal model of contraction  driven motility reduces to a Keller-Segel system with free boundaries was first realized  in \cite{RecPutTru2013} where the corresponding traveling wave solutions were analyzed in the simplest 1D setting.

 While in the model  \cite{BarLeeAllTheMog2015}
 the kinematic condition at the free boundary contains curvature, in present work we assume continuity of velocities of the gel and the membrane (boundary)  as in 1D  model  \cite{RecPutTru2013}, \cite{RecPutTru2015})  but adapt the Young-Laplace equation for the pressure on the boundary as usually done in Hele-Shaw model and has no analog in 1D.

Our objective is analysis of the coupled Hele-Shaw/Keller-Segel model. Specifically, we are interested in existence and stability of its special solutions such as steady states and traveling waves, which are important for understanding cell motility. While the existence of radially symmetric steady  states is straightforward, their nonlinear stability analysis is highly non-trivial. Indeed, we first perform the linear stability analysis around radial steady states and show that the linearized operator has zero eigenvalue of multiplicity 2. 
The corresponding two eigenvectors appear since these steady states are a continuum family parametrized by their centers (shift invariance) and radii. Thus, the linear stability analysis is inconclusive. For nonlinear stability we need to control the component of the solution corresponding to the both eigenvectors.  For the first eigenvector we use  factorization in shifts for the linearized problem, whereas for the second one we use conservation of total myosin mass in place a Lyapunov function, which is a standard tool in proof of nonlinear stability (it is known that  
establishing Lyapunov function in free boundary problems is quite difficult). Another challenge in the proof of nonlinear stability of steady states can be  described as follows. The problem with free boundary is reduced to a problem in a fixed domain (a disk). For classical Hele-Shaw problem, this is done by conformal maps since the pressure is harmonic and therefore the PDE is conformally invariant 
\cite{ConPug1993}, \cite{BlaCas2013}. However, the pressure in our problem, see \eqref{actflow_in_terms_of_phi}-\eqref{myosin_bc_I}, is not harmonic due to coupling with myosin density. Similar difficulty arises  in tumor growth free boundary problems, see, e.g.,  \cite{FrHu2006}, \cite{BazFri2003}, where it is dealt with by applying the Hanzawa transform. However, the Hanzawa 
transform can not be used in problem 
\eqref{actflow_in_terms_of_phi}--\eqref{myosin_bc_I} due to the Neumann condition  \eqref{myosin_bc_I}.
Indeed, this transform does not preserve normal derivative leading to a time dependent boundary condition in a parabolic equation 
which is hard to deal with.  
That is why we  construct another transform which preserves normal derivative but is more sophisticated. Reduction of the PDEs to the fixed disk, with the help of aforementioned transform leads to new nonlinear terms, see $f_i$ and $g_i$, $i=1,2$ in \eqref{NEWactflow_in_terms_of_phi}--\eqref{skorost'}. These terms contain high order derivatives  and one needs to establish optimal regularity and decay results for linearized problem to employ fixed point argument for existence of solutions and  their stability. 
To this end we establish global regularity properties  for our free boundary problems (for general geometric regularity results in free boundary problems see \cite{CafSal2005}). 

Finally, we note that free boundary problems in cell motility  are closely related to the free boundary problems in tumor growth models. The key differences  are that in the letter models the  area  of  domain undergoes significant changes and  there is no persistent motion (see, e.g.,  \cite{Fri2004},
\cite{PerVau2015}, and  \cite{MelRoc2017}).  

 




The paper is organized as follows. In Section \ref{Section_the_model} we introduce a 2D model of active gel that is a free boundary problem with Keller-Segel PDEs. 
In Section \ref{section_lin_an_stst} we  consider linearization  around  radially symmetric steady states and introduce a function  of geometrical and physical parameters (the  domain radius, adhesion strength and myosin density).  Theorem \ref{thm_linearizeddisk} establishes a critical value of this function that separates  stability and instability regimes. 
In Section \ref{section_bifurcation} we show that at this critical value bifurcation of the steady states occurs and traveling wave solutions appear, as described in Theorem \ref{biftwtheorem}. These solutions model persistent motion which is the signature feature of  cytoskeleton gels  motility. Section \ref{section_lin_stab_tw} is devoted to linear stability analysis of the traveling wave solutions which yields stability up to a slow center manifold. Finally, Theorem \ref{nonlinear_stability} in Section \ref{section_nonlin_stab_stst} establishes nonlinear stability of steady states for subcritical values of the parameters.

{\bf Acknowledgments}. VR is grateful to PSU Center for Mathematics of Living and Mimetic Matter, and  to PSU Center for Interdisciplinary Mathematics  for support of his  two stays at Penn State. His travel was also supported by NSF grant DMS-1405769. We thank  our colleagues  I. Aronson, J.-F. Joanny, N. Meunier, A. Mogilner, J. Prost  and L. Truskinovsky for useful discussions and suggestions on the model. We also express our gratitude to the  members of the L. Berlyand's PSU  research team, R. Creese, J. King, M. Potomkin,  and A. Saftsten, for careful reading and help in  the preparation of the manuscript.  

\section{The model}
\label{Section_the_model}
 
We consider a 2D model of motion of  an active gel drop which occupies a domain $\Omega(t)$ with free boundary. 
The flow of the acto-myosin network  inside the domain $\Omega(t)$ is described by the velocity field $u$. In the adhesion 
dominated regime (overdamped flow) \cite{CalJonJoanPro2008}
, \cite{BlaCas2013}
$u$ obeys  the Darcy's law 
\begin{equation}
\label{DarcyLaw}
-\nabla p=\zeta u \quad \text{in}\ \Omega(t),
\end{equation}
where $-p$ stands for the scalar stress ($p$ is the pressure) and $\zeta$  is the constant effective
adhesion drag coefficient. 
We consider compressible gel 
(the actomyosin network is a compressible fluid,  incompressible cytoplasm fluid
can be squeezed easily into the dorsal direction in the cell \cite{NicNovPulRumBraSleMog2017}). The  main modeling assumption of this 
work
is the following constitutive law 
for the scalar stress $-p$
\begin{equation}
\label{constitutiveEQ}
-p= \mu\div u +k m-p_{\rm h},
\end{equation}
where $\mu\div u$ is the hydrodynamic stress ($\mu$ being the effective bulk 
viscosity of the gel), the middle term 
$k m$ is the active component of the stress 
which is proportional to the density 
$m=m(x,y,t)>0$ 
of myosin motors with a constant contractility coefficient $k>0$,  
$p_{\rm h}$ is the constant 
 homeostatic pressure. 
 Throughout this work we assume that 
the effective bulk viscosity and the contractility coefficient $k$ in \eqref{constitutiveEQ} are scaled to $\mu=1$, 
$k=1$. We prescribe the following condition on the boundary
\begin{equation}\label{Young_Laplace_eq}
p+p_{\rm e}=\gamma \kappa \quad \text{on} \ \partial \Omega(t),
\end{equation}
known as the Young-Laplace equation, where $\kappa$ denotes the curvature, $\gamma>0$ 
is a constant coefficient and $p_{\rm e}$ is the effective traction which describes the mechanism of 
approximate conservation of the area due to the membrane-cortex tension. The traction $p_{\rm e}$ generalizes  the one-dimensional nonlocal spring condition introduced in \cite{RecPutTru2013}, \cite{RecPutTru2015}, see a more recent work \cite{PutRecTru2018} which also introduces   the cell volume regulating homeostatic pressure
\footnote{The author are grateful to L.Truskinovski for bringing 
\cite{PutRecTru2018} to their attention 
and helpful discussions on bifurcations 
during the preparation of the manuscript.},  and  we  similarly assume the simple linear dependence $p_{\rm e}=-k_{\rm e} |\Omega|$ of $p_{\rm e}$ on the area $|\Omega|$, where $k_{\rm e}$ is the  stiffness coefficient.
The evolution of motor density is described by the advection-diffusion equation
\begin{equation}
\label{myosin_equations}
\partial_t m=\Delta m -\div (u m) \quad \text{in} \ \Omega(t)
\end{equation}
 and no flux boundary condition in moving domain
\begin{equation}
\label{myosin_bc}
\partial_\nu m =((u\cdot \nu) - V_\nu)m \quad\text{on}\ \partial\Omega(t),
\end{equation}
$\nu$ stands for the outward pointing normal vector and $V_\nu$ is the normal velocity of the domain $\Omega(t)$. 
Finally, we assume continuity of velocities on the boundary 
\begin{equation}
\label{KinematicBC}
V_\nu=(u\cdot\nu),
\end{equation} 
so that \eqref{myosin_bc} becomes the standard Neumann condition. Combining    \eqref{DarcyLaw}--\eqref{KinematicBC} yields a closed set of equations that forms a model of cell motility  investigated in this work.   

It is convenient to introduce  the potential for the velocity field $u$  using \eqref{DarcyLaw}:
$$
u= \nabla \phi=- \nabla \frac{1}{\zeta} p
$$
and  rewrite problem \eqref{DarcyLaw}--\eqref{KinematicBC} in the form
\begin{equation}
\label{actflow_in_terms_of_phi}
\Delta \phi +m=\zeta \phi +p_{\rm eff}(|\Omega(t)|) \quad\text{in}\ \Omega(t),
\end{equation}
\begin{equation}\label{actin_bc_potential}
\zeta\phi=-\gamma \kappa\quad\text{on}\ \partial\Omega(t),
\end{equation}
\begin{equation}\label{actin_bc_normal}
V_\nu=\partial_\nu \phi \quad\text{on}\ \partial\Omega(t),
\end{equation}
\begin{equation}
\label{myosin_equations_I}
\partial_t m= \Delta m -\div (m \nabla \phi ), \quad \text{in} \ \Omega(t),
\end{equation}
\begin{equation}
\label{myosin_bc_I}
\partial_\nu m =0 \quad\text{on}\ \partial\Omega(t),
\end{equation}
where we introduce  the notation $p_{\rm eff}:=p_{\rm h}+p_{\rm e}=p_{\rm h}-k_{\rm e} |\Omega(t)|$. We  assume that the area  $|\Omega(t)|$ is  such that 
\begin{equation}
\label{gelswelling}
p_{\rm eff}=p_{\rm eff}(|\Omega(t)|) >0.
\end{equation}
Moreover,  we consider the stiffness coefficient $k_{\rm e}$ to be sufficiently large so that
  it penalizes changes of the area. For instance, it prevents from shrinking of $\Omega$ to a point or from infinite expanding. Precise lower bound  on  $ k_{\rm e}=-p_{\rm eff}^{\prime}$ is given in \eqref{areashrinkingprevention}, see also Remark \ref{shrinking_expansion}.

\begin{rem}\label{evolution}
We view problem \eqref{actflow_in_terms_of_phi}--\eqref{myosin_bc_I}  as an evolution problem with respect to two unknowns $m(x,y, t)$ and  $\Omega (t)$, while the potential $\phi(x,y,t)$ is considered as an additional 
unknown function defining evolution of the free boundary. Indeed, for given $\Omega (t)$ and $m(x,y,t)$	the function $\phi(x,y,t)$ is obtained as the unique solution of the elliptic problem 
\eqref{actflow_in_terms_of_phi}--\eqref{actin_bc_potential}, and its normal derivative $\partial_\nu \phi$ 
defines normal velocity of the domain due to \eqref{actin_bc_normal}. Problem \eqref{actflow_in_terms_of_phi}--\eqref{myosin_bc_I} is 
supplied with initial conditions for $m$ and $\Omega$ and it is natural not to include the unknown $\phi$ 
into the phase space of this evolution problem but rather in the definition of the operator governing the semi-group in this phase space that defines the evolution of  $m$ and $\Omega$.
\end{rem}

	
In what follows we assume for simplicity  that problem \eqref{actflow_in_terms_of_phi}--\eqref{myosin_bc_I} is symmetric with respect to 
$x$-axis. Specifically we assume symmetry of the initial data, domain  $\Omega(0)$ and  $m(x,y, t=0)$ which is preserved for 
$t >0$.

\section{Linear stability analysis of radially symmetric steady states}
\label{section_lin_an_stst}

Problem \eqref{actflow_in_terms_of_phi}--\eqref{myosin_bc_I} possesses a family of radially symmetric solutions with
both $\phi$ and $m$ being constant. For a given radius $R>0$ the constant solution, 
$\phi=\phi_0$  and  $m=m_0$, is obtained from \eqref{actin_bc_potential} and \eqref{actflow_in_terms_of_phi} in the domain $\Omega(t)=B_R$ and it is verified by the direct substitution ($B_R$ is the disk with radius $R$): 

\begin{align}\label{steady_state}
\Omega=B_R,\quad m_0:=-\gamma/ R+p_{\rm eff}(\pi R^2),
\\ 
\notag
\phi_0=-\gamma/(\zeta R).
\end{align}

It is convenient to use polar coordinate system $(r,\varphi)$  whose origin is moving with the 
domain, 
\begin{equation}\label{def_of_Omega}
\Omega(t)=\{(x=r\cos\varphi+X_c(t), y=r\sin\varphi); 0\leq r<R+\rho(\varphi,t)\},
\end{equation}
where $X_c(t)$ is an approximation of $\tilde{X_c}(t)$, the $x$ coordinate of the center of mass of $\partial \Omega$, and  $\rho (\varphi, t)$ satisfies the following orthogonality condition that eliminates infinitesimal  shifts
\begin{equation}
\label{centeredDomAin}
\int_{-\pi}^{\pi}\rho (\varphi, t)  \cos\varphi d\varphi =0, \quad \text{ for all }  t >0.
\end{equation}
Indeed, formula \eqref{centeredDomAin} is a linearization of the the $x$ coordinate of the center of mass  $\tilde{X}_c(t)$ of $\partial \Omega$: 
\begin{eqnarray}\label{center_of_mass_cond}
0&=&\frac{1}{|\partial \Omega| }\int\limits_{\partial \Omega} x\,d\sigma -\tilde{X}_c\nonumber
\\&=&\frac{1}{|\partial \Omega| }\int_{\partial \Omega}(x-\tilde{X}_c)\,\text{d}\sigma\nonumber\\
&=&\frac{1}{|\partial \Omega| }\int_{-\pi}^{\pi}(R+\rho (\varphi, t))  \cos\varphi \sqrt{(R+\rho(\varphi,t))^2+(\rho^\prime_\varphi)^2}d\varphi\nonumber\\
&=& \frac{1}{\pi }\int_{-\pi}^{\pi}\rho(\varphi,t) \,\cos(\varphi) \,d\varphi+O(\rho^2).
\label{center_of_mass_linearized_cond}
\end{eqnarray}
 Here $\sigma$ denotes the arc length. 
%

 Linearizing problem 
\eqref{actflow_in_terms_of_phi}--\eqref{myosin_bc_I}
around the radially symmetric steady state (for $m_0$ from \eqref{steady_state} and $\Omega(t)=B_R$) we get the following system
\begin{equation}
\label{RadSymLinearized1}
\partial_t \rho+\dot X_c\cos\varphi=\partial_r \phi \quad \text{on}\ \partial B_R, 
\end{equation}
\begin{equation}
\label{ur_e_dlyacrtin}
\Delta \phi + m=\zeta \phi+p_{\rm eff}^\prime(\pi R^2)R\int_{-\pi}^{\pi}\rho(\varphi)d\varphi \quad\text{in}\ B_R,
\end{equation}
\begin{equation}
\label{krizna_linearized}
\phi=\frac{\gamma}{R^2\zeta}(\rho^{\prime\prime}+\rho) \quad \text{on}\ \partial B_R,
\end{equation}
\begin{equation}
\label{ur_e_dlya_m}
\partial_t {m}=\Delta {m} - m_0 \Delta {\phi} \quad\text{in}\ B_R, \quad \partial_r m=0\quad \text{on}\ \partial B_R,\end{equation}
the integral term in \eqref{ur_e_dlyacrtin} appears due to linearization of the term $p_{\rm eff} (|\Omega|)$ in \eqref{actflow_in_terms_of_phi}, 
$\rho^{\prime\prime}$ denotes $\partial_\varphi^2 \rho$.

 In operator form
system \eqref{RadSymLinearized1}--\eqref{ur_e_dlya_m} reads 
\begin{equation*}
\frac{d}{dt}U=\mathcal{A} U, 
\end{equation*} 
where $U=(m,\rho)$ and $\mathcal{A}$ is the following operator
\begin{equation}
\mathcal{A}:\left[\begin{array}{c}m\\\rho\end{array}\right]
\mapsto \left[\begin{array}{c}\Delta m- m_0 \Delta \phi\\\partial_r\phi-\frac{\cos \varphi}{\pi}\int_{-\pi}^{\pi}\partial_r\phi \cos \tilde{\varphi}\,d\tilde{\varphi}\end{array}\right],
\label{operator1}
\end{equation}
where $\phi$ solves the time independent problem \eqref{ur_e_dlyacrtin}--\eqref{krizna_linearized} for given $m$ and $\rho$. This operator is considered on pairs $U=(m,\rho)$ such that $m\in H^2(B_R)$ and 
$\partial_r m=0$ on $\partial B_R$, $\rho\in H^4(-\pi,\pi)$  and  $\rho$ is an even $2\pi$-periodic function.   The integral term in \eqref{operator1} appears  when the orthogonality condition \eqref{centeredDomAin} is applied to \eqref{RadSymLinearized1}.
The study of well posedness of the linearized system \eqref{RadSymLinearized1}--\eqref{ur_e_dlya_m}  and its stability  
amounts to the spectral analysis of the operator $\mathcal{A}$.

Observe that due to radial symmetry of operator $\mathcal{A}$ as well as its symmetry with respect to $x$-axis, all eigenvectors of $\mathcal{A}$ are of the form $m=\hat{m}(r)\cos (n\varphi)$ and $\rho=\hat{\rho}\cos(n\varphi)$ for integer $n\geq 0$, and $\phi$, the solution of \eqref{ur_e_dlyacrtin}--\eqref{krizna_linearized}, is of the similar form: $\phi=\hat{\phi}(r)\cos (n\varphi)$. The eigenvalue problem for operator $\mathcal{A}$ is:
\begin{eqnarray}
\lambda m&=&\Delta m - m_0 \Delta \phi\text{ in }B_R,\label{eig1}\\
\lambda \rho&=&(1-\delta_{n1})\partial_r \phi \text{ on }\partial B_R,\label{eig2}\\
-\Delta \phi+\zeta \phi&=& -m- 2\pi p_{\rm eff}^\prime(\pi R^2)R\,\hat{\rho}\,\delta_{n0} \text{ in }B_R,\label{eig3}\\
\phi &=& -\dfrac{\gamma (n^2-1)}{R^2\zeta} \hat{\rho} \text{ on }\partial B_R,\label{eig4}\\
\partial_r m &=&0\text{ on }\partial B_R,\label{eig5}
\end{eqnarray}
where $\delta_{nk}$ is the Kronecker delta.

\begin{rem}\label{zero_ev}
(i) The operator $\mathcal {A}$ has 
zero eigenvalue with an eigenvector $(m, \rho(\varphi))=(0,\cos\varphi)$.
The eigenspace corresponding to eigenvalue 0 represents infinitesimal shifts of the reference solution $m=m_0,\rho=0$ and $\Omega=B_R$. To see this, note that if $\Omega=B_R+\varepsilon (1,0)$ (i.e., $\Omega$ is $B_R$ shifted by $\varepsilon$ along $x$-axis), then in view of \eqref{def_of_Omega} $\rho(\varphi)=\varepsilon  \cos \varphi + o(\varepsilon)$ for small $\varepsilon$. Moreover, since problem \eqref{actflow_in_terms_of_phi}--\eqref{myosin_bc_I} is translational invariant, then any shift of the solution is also a solution.  However, $(0,\varepsilon\cos\varphi)$ are eigenfunctions of operator $\mathcal{A}$ obtained from the linearization of the original problem, these eigenfunctions correspond to infinitesimal shifts, not exact shifts. 

(ii) Yet another zero eigenvalue of the operator $\mathcal{A}$ is obtained by taking derivative of the family of steady states \eqref{steady_state} with respect to the parameter $R$.  The corresponding eigenvector is  
$m=\gamma/R+2\pi p_{\rm eff}^\prime(\pi R^2)R$, $\rho =1$. 
\end{rem}
While the two aforementioned eigenvectors corresponding to zero eigenvalue are trivially obtained by taking derivatives of families of steady states  in the parameters, the following Lemma describes all other possible eigenvectors corresponding to the zero eigenvalue.

\begin{lem}
\label{och_polezn_lemma} 
 For $\zeta\geq m_0$ the operator $\mathcal{A}$ has zero eigenvalue corresponding to a nonconstant $m$ if and only if $m=m_0 (\phi_1(r)-r)\cos\varphi$ and $\phi_1(r)$
	solves
\begin{equation}
\label{BifCond1}
\frac{1}{r}(r\phi^\prime_1(r))^\prime-\frac{1}{r^2} \phi_1(r) +(m_0-\zeta)\phi_1(r) =m_0 r\quad 0\leq r<R,
\quad \phi_1(0)=0, \,\phi_1(R)=0.
\end{equation}	
and 
\begin{equation}
\label{BifCond1bis}
\phi_1^\prime(R) =1.
\end{equation}	
\end{lem}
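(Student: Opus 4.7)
The plan is to exploit the radial and reflection symmetries of $\mathcal A$: any eigenvector decomposes into a single Fourier mode
\begin{equation*}
m=\hat m(r)\cos(n\varphi),\quad \rho=\hat\rho\cos(n\varphi),\quad \phi=\hat\phi(r)\cos(n\varphi),
\end{equation*}
for some integer $n\geq 0$, and the claim reduces to showing that, under $\zeta\geq m_0$, only the mode $n=1$ can produce a zero eigenvalue with nonconstant $m$, and that this case is governed exactly by \eqref{BifCond1}--\eqref{BifCond1bis}. Setting $\lambda=0$ in \eqref{eig1} gives $\Delta(m-m_0\phi)=0$, and regularity of $m-m_0\phi$ at the origin forces
\begin{equation*}
\hat m(r)=m_0\hat\phi(r)+Cr^n\qquad\text{for some constant }C.
\end{equation*}

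For $n\geq 2$, \eqref{eig2} at $\lambda=0$ gives $\hat\phi'(R)=0$, and then \eqref{eig5} becomes $m_0\hat\phi'(R)+nCR^{n-1}=0$, which forces $C=0$. Substituting $\hat m=m_0\hat\phi$ back into \eqref{eig3} reduces it to the homogeneous problem $-\Delta\phi+(\zeta-m_0)\phi=0$ in $B_R$ with $\partial_\nu\phi=0$ on $\partial B_R$. Multiplying by $\phi$ and integrating by parts yields
\begin{equation*}
\int_{B_R}|\nabla\phi|^2+(\zeta-m_0)\int_{B_R}\phi^2=0,
\end{equation*}
and since $\phi$ has zero mean on $B_R$ (because $n\geq 1$), the hypothesis $\zeta\geq m_0$ forces $\phi\equiv 0$. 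The Dirichlet condition \eqref{eig4} with $n^2-1\neq 0$ then yields $\hat\rho=0$, whence $\hat m\equiv 0$. The mode $n=0$ is similar but simpler: a direct analysis of the radial elliptic problem with Neumann condition $\hat\phi'(R)=0$ (from \eqref{eig2}) shows that $\hat\phi$ must be constant, so $\hat m$ is constant and this mode contributes no nonconstant $m$.

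Only the mode $n=1$ remains. Here \eqref{eig2} is vacuous at $\lambda=0$ and \eqref{eig4} reduces to $\hat\phi(R)=0$ (since $n^2-1=0$). Inserting $\hat m=m_0\hat\phi+Cr$ into \eqref{eig3} and writing the Laplacian in polar coordinates gives
\begin{equation*}
\hat\phi''+\frac{1}{r}\hat\phi'-\frac{1}{r^2}\hat\phi+(m_0-\zeta)\hat\phi=-Cr,\qquad \hat\phi(0)=\hat\phi(R)=0.
\end{equation*}
A nontrivial eigenvector requires $C\neq 0$, and scaling invariance allows the normalization $C=-m_0$; setting $\phi_1:=\hat\phi$ yields exactly \eqref{BifCond1}. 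The remaining Neumann condition \eqref{eig5} now reads $\hat m'(R)=m_0\hat\phi'(R)+C=m_0(\phi_1'(R)-1)=0$, which is precisely \eqref{BifCond1bis}. Conversely, a direct substitution shows that any $\phi_1$ solving \eqref{BifCond1}--\eqref{BifCond1bis} produces, via $m=m_0(\phi_1(r)-r)\cos\varphi$ and $\phi=\phi_1(r)\cos\varphi$, a zero eigenvector of $\mathcal A$.

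The main obstacle is the mode $n\geq 2$, where one must rule out nontrivial solutions of the overdetermined problem for $\phi$ with both Dirichlet and Neumann data prescribed on $\partial B_R$. The coercivity estimate above is the only place the hypothesis $\zeta\geq m_0$ is used, and when it fails, additional zero modes at special radii $R$ can appear and seed the bifurcation of Theorem \ref{biftwtheorem}.
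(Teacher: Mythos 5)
Your proof is correct and follows essentially the same route as the paper's: decompose into Fourier modes, observe that $\Delta(m-m_0\phi)=0$ at $\lambda=0$ and use regularity at the origin to get $\hat m = m_0\hat\phi + Cr^n$, then rule out $n=0$ and $n\geq 2$ (via the coercivity of $-\Delta+(\zeta-m_0)$ with Neumann data) and read off \eqref{BifCond1}--\eqref{BifCond1bis} from the $n=1$ mode. You are a bit more explicit than the paper in two spots: you spell out why the constant $C$ vanishes for $n\geq 2$ (combining $\hat\phi'(R)=0$ from \eqref{eig2} with $\hat m'(R)=0$ from \eqref{eig5}), and you record the trivial converse direction of the ``if and only if''; otherwise the argument is the same.
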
	
\begin{proof} Set $\lambda =0 $ in \eqref{eig1}-\eqref{eig5} and consider all integer $n\geq 0$.

If $n=0$, then $m=\hat{m}(r)$, $\phi=\hat{\phi}(r)$ and $\rho=\hat{\rho}$, and \eqref{eig1}, \eqref{eig2} can be written as 
\begin{equation*}
\left\{\begin{array}{l}-\Delta (m-m_0\phi)=0,\\ ~~\partial_r(m-m_0\phi)|_{r=R}=0.\end{array}\right.
\end{equation*} 
This implies that $m-m_0\phi=C_1$. Substituting $m=C_1+m_0\phi$ in \eqref{eig1} one obtains
\begin{equation}\label{pr_n0}
\left\{\begin{array}{l}-\Delta \phi + (\zeta-m_0)\phi =C_2,\text{ in } B_R,\\~~\partial_r\phi=0, \text{ on }\partial B_R, \end{array}\right.
\end{equation}
where $C_2=C_1-2\pi p'_h(\pi R^2)R^3\zeta \gamma^{-1}$. Since $\zeta\geq m_0$ one deduces that $\phi$ is constant 
Then $m$ is also constant,  $m=C_1+m_0\phi$. 

If $n=1$, then \eqref{eig1} implies that $u(r):=\hat{m}(r)-m_0\hat{\phi}(r)$ satisfies the following equation:
\begin{equation*}
\frac{1}{r}(ru')'-\frac{1}{r^2}u=0,
\end{equation*}  	
therefore $u(r)=C_3r$. Thus, $\hat{m}=C_3r+m_0\hat{\phi}$. Substituting this representation for 
$\hat{m}$ into \eqref{eig3} we obtain,
\begin{equation}
\frac{1}{r}(r\hat{\phi}')'-\frac{1}{r^2}\hat{\phi}+(m_0-\zeta)\hat{\phi}=-C_3m_0r.
\end{equation}
From continuity of $\phi$ at the origin we obtain that $\hat{\phi}(0)=0$. From \eqref{eig5} we obtain that $\tilde{\phi}'(R)=-C_3$. Now taking $\phi_1(r):=\tilde{\phi}(r)/C_3$ we see that both \eqref{BifCond1} and \eqref{BifCond1bis} are satisfied.

If $n\geq 2$, we have 
$$
\frac{1}{r}(r\hat\phi^\prime(r))^\prime-\frac{n^2}{r^2} \hat\phi(r) +(m_0-\zeta)\hat\phi(r) =0\quad 0\leq r<R,
\quad \hat\phi(0)=0, \,\hat\phi^\prime(R)=0.
$$
The latter problem has only trivial solutions 
for $\zeta\geq m_0$. Then from \eqref{eig1} and \eqref{eig5} we deduce that $m=0$, while \eqref{eig4} yields $\hat\rho=0$.

Therefore, there exists a non-constant $m$, corresponding to the zero eigenvalue (that is, solution of \eqref{eig1}-\eqref{eig5} with 
$\lambda = 0$) only in the case $n=1$, and in this case $\tilde{m}=m_0(\phi_1(r)-r)$ with $\phi_1(r)$ solving both \eqref{BifCond1} and \eqref{BifCond1bis}.


\end{proof}

\begin{thm} 
\label{thm_linearizeddisk}
(Linear stability of steady states \eqref{steady_state}).  Assume that  the myosin density $m_0$ is bounded above by the third eigenvalue of the operator $-\Delta$ in $B_R$ with the Neumann boundary condition on $\partial B_R$, also assume that $\zeta>m_0$ and $p_{\rm eff}^\prime(\pi R^2)$ satisfies 
\begin{equation}
	\label{areashrinkingprevention}
 p_{\rm eff}^\prime(\pi R^2)\leq 
 -\Bigl(\gamma/R +2m_0 + \sqrt{2R\sqrt{\zeta}}m_0\Bigr)/(2\pi R^2). 
\end{equation}
Let $\phi_1$ be the solution of \eqref{BifCond1}.
Then 
\begin{itemize}
\item[(i)]  if $\phi_1^{\prime}(R)<1$, then the operator $\mathcal{A}$ has zero eigenvalue $\lambda=0$ of multiplicity two, other eigenvalues have negative real parts,
\item[(ii)]  if $\phi_1^{\prime}(R)=1$, then the operator $\mathcal{A}$ has zero eigenvalue $\lambda=0$ of multiplicity three, other eigenvalues have negative real parts, 
\item[(iii)]  if $\phi_1^{\prime}(R)>1$, then the operator $\mathcal{A}$ has a positive eigenvalue $\lambda>0$.
\end{itemize}
\end{thm}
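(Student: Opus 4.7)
The plan is to exploit the rotational symmetry of $\mathcal{A}$ and decompose each eigenvector into a Fourier mode $m=\hat m(r)\cos(n\varphi)$, $\phi=\hat\phi(r)\cos(n\varphi)$, $\rho=\hat\rho\cos(n\varphi)$ for integer $n\geq 0$, so that \eqref{eig1}--\eqref{eig5} reduces for every $n$ to a radial problem on $(0,R)$. I would treat $n\geq 2$, $n=0$ and $n=1$ separately.

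For $n\geq 2$ I would derive an energy identity by testing \eqref{eig1} and \eqref{eig3} against carefully chosen combinations of $\overline{\hat m}$ and $\overline{\hat\phi}$, integrating by parts on $B_R$, and using the kinematic condition \eqref{eig2} together with the boundary data \eqref{eig4}--\eqref{eig5}. The hypothesis that $m_0$ lies below the third Neumann eigenvalue of $-\Delta$ on $B_R$, which after Fourier separation coincides with the lowest Neumann eigenvalue of $-\Delta$ in the $n\geq 2$ angular sector, combined with $\zeta>m_0$, makes the resulting quadratic form coercive and forces $\mathrm{Re}\,\lambda<0$. For $n=0$, equations \eqref{eig1} and \eqref{eig5} show that $\hat m-m_0\hat\phi$ is harmonic with vanishing Neumann trace, hence constant; this reduces the problem to a self-adjoint spectral problem in $\hat\phi$ alone, and the quantitative bound \eqref{areashrinkingprevention} together with $\zeta>m_0$ pushes all its eigenvalues strictly below $0$ except the one generated by differentiating the radial steady-state family \eqref{steady_state} in $R$, which produces the zero eigenvalue of Remark \ref{zero_ev}(ii).

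The heart of the proof is mode $n=1$, where the factor $(1-\delta_{n1})$ in \eqref{eig2} removes the boundary equation for $\rho$: either $\lambda=0$, giving the shift eigenvector $(0,\cos\varphi)$ of Remark \ref{zero_ev}(i), or $\hat\rho=0$. On the second branch the problem decouples from $\rho$ and becomes
\begin{equation*}
\lambda\hat m=\Delta_1\hat m-m_0\Delta_1\hat\phi,\quad -\Delta_1\hat\phi+\zeta\hat\phi=-\hat m,\quad \hat\phi(R)=0,\quad \hat m'(R)=0,
\end{equation*}
with $\Delta_1=\partial_r^2+r^{-1}\partial_r-r^{-2}$. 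By Lemma \ref{och_polezn_lemma} this branch admits $\lambda=0$ exactly when $\phi_1'(R)=1$, which gives case (ii). To separate cases (i) and (iii), I would treat $\phi_1'(R)-1$ as a bifurcation parameter and follow the relevant real eigenvalue through the critical threshold by a Lyapunov--Schmidt expansion, using the explicit kernel element $\tilde m=m_0(\phi_1(r)-r)$ from the Lemma as the leading direction and verifying $\mathrm{sign}\,\lambda=\mathrm{sign}(\phi_1'(R)-1)$ by a transversality computation.

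The main obstacle is this $n=1$ bifurcation analysis: one needs both the transversal crossing of the real eigenvalue at the threshold and the exclusion of complex eigenvalues of the decoupled $n=1$ system with positive real part. The latter I would establish by an energy estimate adapted to the mixed Dirichlet/Neumann data above, using $\zeta>m_0$ to control $\hat m$ by $\hat\phi$ and to confine all eigenvalues to a compact subset of $\mathbb{C}$; elliptic regularity then gives compactness of the resolvent of $\mathcal{A}$ and hence a purely discrete spectrum, which combined with the mode-by-mode analysis yields the full picture (i)--(iii).
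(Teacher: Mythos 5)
Your decomposition by Fourier mode and your treatment of $n\ge 2$ and $n=0$ match the paper's proof closely (energy identities from testing with complex conjugates, Proposition~\ref{lem:ineq_for_m} for $n\ge 2$, and for $n=0$ the change of unknown $\tilde\phi$ plus the quantitative bound coming from the minimization constant $Q$ and condition~\eqref{areashrinkingprevention}). The $n=1$ reduction you write down --- factor $(1-\delta_{n1})$ forcing $\lambda=0$ or $\hat\rho=0$, and the decoupled problem with $\hat\phi(R)=0$, $\hat m'(R)=0$ --- is also correct.

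The genuine gap is in how you propose to distinguish cases (i) and (iii). A Lyapunov--Schmidt expansion around the critical threshold and a transversality computation can only tell you $\mathrm{sign}\,\lambda=\mathrm{sign}(\phi_1'(R)-1)$ \emph{locally}, i.e.\ for parameters in a neighborhood of those where $\phi_1'(R)=1$. The theorem, however, asserts the global dichotomy for \emph{all} $(R,m_0,\zeta)$ with $\phi_1'(R)<1$ resp.\ $>1$. The paper bypasses this by a variational argument: introducing the quadratic form
\begin{equation*}
F_\zeta[m]=\int_{B_R}|\nabla m|^2\,dxdy-m_0\int_{B_R}m^2\,dxdy+m_0\zeta\int_{B_R}|\nabla\phi|^2\,dxdy+m_0\zeta^2\int_{B_R}\phi^2\,dxdy
\end{equation*}
on the $n=1$ sector, with $\phi$ the Dirichlet solution of $\Delta\phi+m=\zeta\phi$, and observing that the $n=1$ eigenvalues of $\mathcal A$ coincide with those of the self-adjoint operator generated by $-F_\zeta$ (hence are real --- this is also how the paper handles your "exclusion of complex eigenvalues" worry cleanly, rather than by a separate energy estimate). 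Then for (iii) one simply evaluates $F_\zeta$ on the test function $m=m_0(\phi_1(r)-r)\cos\varphi$ and finds $F_\zeta[m]=\pi R^2 m_0^2(1-\phi_1'(R))<0$, giving a positive eigenvalue unconditionally. For (i) the paper argues by contradiction using monotonicity of $F_\zeta[m]$ in $\zeta$ together with a comparison/maximum-principle argument for the ODE~\eqref{BifCond1}: if $\min F_\zeta<0$ then there is $\hat\zeta>\zeta$ with $\min F_{\hat\zeta}=0$, hence (by Lemma~\ref{och_polezn_lemma}) $\hat\phi_1'(R)=1$, and a comparison of solutions of~\eqref{BifCond1} at $\zeta$ and $\hat\zeta$ then forces $\phi_1'(R)\ge 1$. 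Without some such global mechanism (variational characterization, monotonicity, or comparison), your bifurcation-theoretic plan does not reach the statement as formulated.
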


 \begin{rem} It is well known that if linearized operator has zero eigenvalue, then linear spectral analysis 
is inconclusive for stability/instability of the underlying nonlinear system. As explained in Remark \ref{zero_ev}, operator $A$ always has  zero eigenvalue  with at least two eigenvectors (corresponding to infinitesimal shifts and the derivative of the family of steady states with respect to the radius.  In Theorem \ref{nonlinear_stability}, we  establish stability  in  the  case (i)  in Theorem \ref{thm_linearizeddisk} by showing that the  first eigenvector can be eliminated thanks to invariance of the problem  \eqref{actflow_in_terms_of_phi}--\eqref{myosin_bc_I} with respect to shifts and 
 projection of the solution of \eqref{actflow_in_terms_of_phi}--\eqref{myosin_bc_I} on the second eigenvector can be controlled due to conservation of myosin. In the case (iii) in Theorem \ref{thm_linearizeddisk} the linearized system is unstable implying instability of nonlinear system \eqref{actflow_in_terms_of_phi}--\eqref{myosin_bc_I}. 
\end{rem}

 \begin{rem}\label{shrinking_expansion}
 Zero (radially symmetric) mode  is responsible for  the expansion and shrinking of the domain.  The condition  \eqref{areashrinkingprevention} assures that  real part of the corresponding eigenvalue is negative, leading to stability with respect to infinitesimal  expansion and  shrinking.	 
 	\end{rem}



\begin{proof}  Thanks to radial symmetry of the problem (and our assumption about symmetry with respect to the $x$-axis) 
eigenvectors of $\mathcal{A}$ have the form $m=\hat m_n(r)\cos n\varphi$, $\rho=\hat\rho_n\cos n\varphi$. Consider first the case 
$n\geq 2$. In this case \eqref{ur_e_dlyacrtin} takes form $\Delta \phi =\zeta \phi-m$, then we have 
\begin{equation*}
\lambda m=\Delta m +m_0 m-m_0 \zeta \phi.
\end{equation*}
Multply this equation by the complex conjugate $\overline {m}$  of $m$ and integrate over $B_R$ to find 
\begin{equation}
\label{Vychisleniya_0}
\lambda \int_{B_R} |m|^2dxdy=-\int_{B_R}|\nabla m|^2dxdy +m_0 \int_{B_R} |m|^2dxdy
-m_0\zeta \int_{B_R} \phi \overline {m}dxdy.
\end{equation}
Now multiply the equation $\overline{m}=\zeta\overline{\phi}-\Delta \overline{\phi}$ by $m_0\zeta\phi$ and integrate over $B_R$ to obtain the following representation for the last term in \eqref{Vychisleniya_0}:
\begin{equation*}
m_0\zeta \int_{B_R} \phi \overline {m}dxdy=m_0\zeta^2 \int_{B_R} |\phi|^2 dxdy+m_0\zeta\int_{B_R}|\nabla \phi|^2dxdy-m_0\zeta\int \phi {\partial_r \overline{\phi}} d\sigma
\end{equation*}
Since $\partial_r \overline{\phi}=\overline{\lambda}\overline{\rho}$ and by virtue of \eqref{krizna_linearized}
$\overline \rho=\frac{R^2\zeta}{\gamma (1-n^2)}\overline\phi$,  equation \eqref{Vychisleniya_0} rewrites as
\begin{align*}
\lambda \int_{B_R} |m|^2dxdy +\overline{\lambda} \frac{m_0 R^2\zeta^2}{\gamma(n^2-1)}\int |\phi|^2d\sigma=&-\int_{B_R}|\nabla m|^2dxdy+m_0 \int_{B_R} |m|^2dxdy\\ 
&- m_0\zeta \int_{B_R} |\nabla \phi|^2dxdy-m_0\zeta^2 \int_{B_R} |\phi|^2dxdy.
\end{align*} 
Note that for $n\geq 2$ the function $\hat m_n(r)\cos n\varphi$ is orthogonal to the first and second eigenfunctions
of the operator $-\Delta$  in $B_R$ with the Neumann condition on $\partial B_R$, recall also that $m_0$ is bounded by the third eigenvalue. Then by Proposition~\ref{lem:ineq_for_m} we have  
\begin{equation*}
\int_{B_R}|\nabla m|^2dxdy-m_0 \int_{B_R} |m|^2dxdy\geq 0,
\end{equation*}
so that real part of $\lambda$ is negative.

Consider now the case $n=0$ which corresponds to radially symmetric eigenfunctions. Taking the derivative of steady states 
with respect to the parameter $R$ we obtain an eigenvector corresponding to zero eigenvalue. Let us show that other radially symmetric eigenvectors correspond to eigenvalues with negative real parts. It is convenient to change the unknown 
$\tilde\phi:= \phi+2\pi R\rho p_{\rm eff}^\prime(\pi R^2)/\zeta$, then in view of \eqref{krizna_linearized} we have 
$\tilde\phi=\rho ( \gamma/ R^2+2\pi Rp_{\rm eff}^\prime(\pi R^2))/\zeta$ which in turn leads to the boundary condition 
\begin{equation*}
\partial_r \tilde \phi= \frac{\lambda\zeta}{\gamma/R^2+2\pi R p_{\rm eff}^\prime(\pi R^2)}\tilde\phi.
\end{equation*}
Then arguing as above we obtain the following relation 
\begin{equation}
\label{DlinnayaOtsenkaNoPonyatnaya}
\begin{aligned}
\lambda \int_{B_R} |m|^2dxdy -\overline{\lambda} \frac{m_0 \zeta^2}{\gamma/R^2+2\pi R p_{\rm eff}^\prime(\pi R^2)}&\int_{\partial B_R} |\tilde\phi|^2d\sigma=-\int_{B_R}|\nabla m|^2dxdy+m_0 \int_{B_R} |m|^2dxdy\\ 
&- m_0\zeta \int_{B_R} |\nabla \tilde \phi|^2dxdy-m_0\zeta^2 \int_{B_R} |\tilde \phi|^2dxdy.
\end{aligned} 
\end{equation}
By Proposition \ref{lem:ineq_for_m} we have 
\begin{equation}
\label{odnavspomog_otsenKa}
\int_{B_R}|\nabla m|^2dxdy-m_0 \int_{B_R} |m|^2dxdy\geq -m_0 \pi R^2 |\langle m\rangle|^2
\end{equation}
because of the radial symmetry of $m$, where $\langle m\rangle$ denotes the mean value of $m$,
 $\langle m\rangle :=\frac{1}{\pi R^2}\int_{B_R} m dxdy$. Therefore real part of $\lambda$ is negative if 
$\langle m\rangle=0$. Thus we can normalize the eigenvector by setting 
\begin{equation}
\label{meanmyosinravnoodyn}
\langle m\rangle=1.
\end{equation} 
Assume also that $\lambda\not=0$. Then integrating the equation $\lambda m=\Delta m-m_0 \Delta \tilde\phi $ we find
\begin{equation*}
\langle m\rangle :=\frac{1}{\pi R^2}\int_{B_R} m dxdy=-\frac{m_0}{\lambda \pi R^2}\int_{\partial B_R} \partial_r \tilde\phi d\sigma=-\frac{2m_0 \zeta}{{\gamma}/R+2\pi R^2 p_{\rm eff}^\prime(\pi R^2)}\tilde\phi (R).
\end{equation*}
Integrating also the equation $\Delta \tilde \phi+m=\zeta\tilde \phi$ we have
\begin{equation}
\label{srednyayatildephi}
\zeta\langle\tilde \phi\rangle=\langle m \rangle +\frac{1}{ \pi R^2}\int_{\partial B_R} \partial_r \tilde\phi d\sigma=(1-\lambda/m_0)\langle m \rangle.
\end{equation}
It follows from \eqref{DlinnayaOtsenkaNoPonyatnaya}--\eqref{meanmyosinravnoodyn} that real part of $\lambda$ is negative 
if  we prove that 
\begin{equation}
\label{nashaSuperTsel}
m_0\pi R^2 -m_0\zeta^2\pi R^2|\langle\tilde \phi\rangle|^2- m_0\zeta \int_{B_R} |\nabla \tilde \phi|^2dxdy-m_0\zeta^2 \int_{B_R} |\tilde \phi-\langle\tilde\phi\rangle|^2dxdy<0.
\end{equation}
By \eqref{srednyayatildephi} and \eqref{meanmyosinravnoodyn} the second term in \eqref{nashaSuperTsel} equals 
$-m_0\pi R^2 |1-\lambda/m_0|^2$, while the last term admits the following lower bound
\begin{equation}
- m_0\zeta \int_{B_R} |\nabla \tilde \phi|^2dxdy-m_0\zeta^2 \int_{B_R} |\tilde \phi-\langle\tilde\phi\rangle|^2dxdy\leq 
-m_0\zeta Q|\tilde\phi(R)-\langle\tilde \phi\rangle|^2,
\end{equation}
where $Q$ is given by
\begin{equation}
\label{miniminiminimizat}
Q=\inf
\left\{\int_{B_R} |\nabla w|^2dxdy+\zeta\int_{B_R} |w|^2dxdy; \langle w \rangle=0, \,w(R)=1\right\}.
\end{equation}
Thus \eqref{nashaSuperTsel} is satisfied if the inequality 
$$
\pi R^2 |1-\lambda/m_0|^2+\frac{Q}{4m_0^2\zeta} |-\gamma/R-2\pi R^2 p_{\rm eff}^\prime(\pi R^2)-2m_0+2\lambda m_0|^2>\pi R^2
$$
holds for every $\lambda>0$, and this is true, in particular, if $-2\pi R^2 p_{\rm eff}^\prime(\pi R^2)\geq \gamma/R+2m_0+ 2\sqrt{\pi\zeta}R m_0/\sqrt{Q}$. The solution $Q$ of the minimization problem  \eqref{miniminiminimizat} is given by 
$$
Q=2\pi\zeta R^2\frac{ I_1(\sqrt{\zeta} R)}{R\sqrt{\zeta} I_2(\sqrt{\zeta} R)}.
$$
where $I_{1}$, $I_2$ are the modified Bessel functions of the first kind. Then using the bound 
$
Q\geq 2\pi \sqrt{\zeta} R
$
we arrive at the inequality from the hypothesis of the Theorem, $-2\pi R^2 p_{\rm eff}^\prime(\pi R^2)\geq \gamma/R+2m_0+ \sqrt{2R\sqrt{\zeta}}m_0$. Finally, if the eigenvalue $\lambda$ is zero, then \eqref{srednyayatildephi} yields 
$\zeta \langle \tilde\phi\rangle=\langle m\rangle$. We use this relation  in \eqref{odnavspomog_otsenKa} and substitute the result 
into \eqref{DlinnayaOtsenkaNoPonyatnaya} to find that $\tilde \phi$ is constant. This implies that $m$ is constant as well so that
this eigenfunction coinsides  with that obtained by taking derivative of steady states in the parameter $R$.

Consider now the case $n=1$. Introduce the space of functions $K_1=\{m\in H^1(B_R);\, m=\hat m(r) \cos\varphi\}$  and consider the quadratic form 
\begin{equation}
\label{formaquadratic}
F_\zeta [m]=\int_{B_R} |\nabla m|^2dxdy -m_0\int_{B_R} m^2dxdy+m_0\zeta\int_{B_R}|\nabla\phi|^2dxdy+m_0\zeta^2 \int_{B_R} \phi^2dxdy,
\end{equation}
where $\phi$ is the unique solution of the equation $\Delta \phi+m=\zeta\phi$ with the Dirichlet boundary condition 
$\phi=0$ on $\partial \Omega$.  Minimizing the Rayleigh quotient $F_\zeta [m]/\int_{B_R} m^2 dxdy$ on $K_1$  we obtain an eigenvalue $\lambda =-\min F_\zeta [m]/\int_{B_R} m^2 dxdy$. Indeed, a minimizer $m$ satisfies
$-\Delta m-m_0 m+m_0\zeta\phi=-\lambda m$ in $B_{R}$ and $\partial_r m=0$ on $\partial B_R$. Thus the pair $m$
and $\rho=0$ is an eigenvector corresponding to the eigenvalue $\lambda$. 

 Now to prove (iii) calculate 
$F_\zeta[m]$ with $m:=m_0(\phi_1(r)-r)\cos\varphi$. In this case $\phi=\phi_1(r)\cos\varphi$ and we have, integrating by parts,
\begin{equation*}
F_\zeta[m]=\int_{\partial B_R}m \partial_r m d\sigma +\int_{B_R}(-\Delta m+m_0 m+\zeta m_0\phi_1)m dxdy=\pi R^2 
m_0^2 (1-\phi_1^\prime(R))<0.
\end{equation*}
Thus the operator $\mathcal{A}$ has a positive eigenvalue.

 To prove  (i) observe that $-\min F_\zeta [m]/\int_{B_R} m^2 dxdy$ provides the exact upper bound for real parts of eigenvalues other than zero eigenvalue  which corresponds to infinitisimal  shifts (in fact one can see that eigenvalues for $n=1$ 
just coincide with those of the selfadjoint operator generated by the form $-F_\zeta [m]$).  Assume, by contradiction,  that  $F_\zeta [m]<0$ for some $m\in K_1$. Observe that 
$F_\zeta[m]$ continuously increases in $\zeta$ and $F_\zeta[m]\to+\infty$ as $\zeta\to+\infty$. Indeed, let $\hat\zeta>\zeta$ let  $\phi$ and $\hat\phi$  solve $\Delta \phi+m=\zeta \phi$ in $B_R$, $\phi=0$ on $\partial B_R$ 
and $\Delta\hat \phi+m=\hat\zeta  \hat\phi$ in $B_R$, $\hat\phi=0$ on $\partial B_R$, correspondingly. Introduce 
$\tilde \phi=\hat\zeta \hat\phi/\zeta$, then 
\begin{align*}
-\frac{\hat\zeta}{\zeta}\left(\int_{B_R}|\nabla \hat \phi|^2dxdy+\hat \zeta \int\hat \phi^2dxdy\right) &=-\frac{\zeta}{\hat \zeta}\int_{B_R}|\nabla \tilde \phi|^2dxdy-\zeta \int\tilde \phi^2dxdy\\
&=
\inf_{\overline{\overline{\phi}}}\left(\frac{\zeta}{\hat \zeta}\int_{B_R}|\nabla \overline{\overline{\phi}}|^2dxdy+\zeta \int|\overline{\overline{\phi}}|^2dxdy -2\int_{B_R}m\overline{\overline{\phi}} dxdy\right)\\
&\leq
\inf_{\overline{\overline{\phi}}}\left(\int_{B_R}|\nabla \overline{\overline{\phi}}|^2dxdy+\zeta \int |\overline{\overline{\phi}}|^2dxdy -2\int_{B_R}m\overline{\overline{\phi}} dxdy\right)\\
&=-\int_{B_R}|\nabla \phi|^2dxdy-\zeta \int  \phi^2dxdy.
\end{align*}
Next we show that there exists $\hat{\zeta}>\zeta$ such that 
\begin{equation*}
\min\limits_{m\in K_1}F_{\hat\zeta}[m]/\int_{B_R}m^2 \, dxdy = 0.
\end{equation*}
Assume by contradiction that there exists a sequence $\zeta_k\to \infty$ and $m_k\in K_1$ such that $\|m_k\|_{L^2(B_R)}=1$ and $F_{\zeta_k}[m_k]~<~0$. Then 
\begin{equation}\label{estimate_with_m_0}
\int_{B_R}|\nabla m_k|^2 \,dxdy+m_0\zeta\int_{B_R}\left(|\nabla \phi_k|^2+ \zeta \phi^2_k\right) \, dxdy<m_0,
\end{equation}
where $\Delta \phi_k+m_k=\zeta_k \phi_k$ in $B_R$, $\phi_k=0$ on $\partial B_R$. 
Observe that $\zeta_k\phi_k - m_k\rightharpoonup 0$ weakly in $L^2(B_R)$. Indeed, multiply equation $\Delta \phi_k+m_k=\zeta_k \phi_k$ by a test function $v\in H^1(B_R)$
\begin{equation*}
\langle \nabla \phi_k, \nabla v \rangle + \langle m_k-\zeta_k\phi_k,v \rangle =0 
\end{equation*}
and pass to the limit as $k\to \infty$ (note that $\|\nabla \phi_k\|<1/\sqrt{\zeta_k}$ by \eqref{estimate_with_m_0}). Thus, $m_k-\zeta_k \phi_k \rightharpoonup 0$ weakly  in  $L^2(B_R)$. On the other hand, due to \eqref{estimate_with_m_0}, $m_k$ is bounded in $H^1(B_R)$, so that there exists $m^{*}\in H^1(B_R)$ such that, up to a subsequence,  $m_k\to m_*$ strongly in $L^2(B_R)$, and thus 
$\liminf\limits_{k\to \infty} (\|\zeta_k \phi_k\|^2_{L^2(B_R)}-\|m_k\|^2_{L^2(B_R)} )\geq 0$. Then $F_{\zeta_k}[m_k]<0$ implies  that 
\begin{equation}
\int_{B_R}|\nabla m_k|^2 \, dxdy + m_0\left\{\int_{B_R}|\zeta_k\phi_k|^2\,dxdy - \int_{B_R}|m_k|^2 \, dxdy\right\}<0.
\end{equation}
By passing to the limit $k\to \infty$ we obtain that $\nabla m^*=0$ and thus $m^*\equiv \text{const}$, which obviously contradicts $\langle m_k \rangle =0$ (we consider case $n=1$) and $\|m_k\|_{L^2(B_R)}=1$.

Thus, $\min F_{\hat\zeta}[m]/\int_{B_R} m^2 dxdy=0$ for some $\hat \zeta >\zeta$.
Then by Lemma \ref{och_polezn_lemma}
the solution of 
\begin{equation}
\label{BifCond1aux}
\frac{1}{r}(r\hat\phi^\prime_1(r))^\prime-\frac{1}{r^2} \hat\phi_1(r) +(m_0-\hat\zeta)\hat \phi_1(r) =m_0 r\quad 0\leq r<R,
\quad\hat \phi_1(0)=0, \,\hat\phi_1(R)=0.
\end{equation}	
satisfies
\begin{equation}
\label{BifCond1bisaux}
\hat\phi_1^\prime(R) =1.
\end{equation}	
But  $-\frac{1}{r}(r(\hat\phi^\prime_1(r)-\phi^\prime_1(r)))^\prime+\frac{1}{r^2}( \hat\phi_1(r)-\phi_1(r)) +(\zeta-m_0)
(\hat \phi_1(r)-\phi_1(r)) =(\zeta-\hat\zeta)\hat\phi_1 >0$  for $0\leq r<R$, and $\hat \phi_1(0)-\phi_1(0)=\hat\phi_1(R)-\phi_1(R)=0$. By the maximum principle $\hat\phi_1(r)-\phi_1(r)> 0$ for $0< r<R$, therefore  
 $\hat\phi^\prime_1(R)\leq \phi^\prime_1(R)$, i.e. $\phi^\prime(R)\geq 1$, contradiction.

Finally (ii) follows by the uniqueness of the solution of \eqref{BifCond1}.
\end{proof}


\section{Bifurcation of traveling waves from the family  of steady states}
\label{section_bifurcation}

In this Section we show that zero eigenvalue corresponding to eigenvector described in Lemma \ref{och_polezn_lemma} leads to a 
bifurcation of traveling wave solutions from the family of radially symmetric steady states \eqref{steady_state} parametrized by $R$. 
This bifurcation is determined by three parameters: the size of the cell $R$,  and adhesion strength $\zeta$ which are independent parameters and  the myosin density $m_0$.  Due to zero force balance in the steady state, surface tension (determined by  curvature $R^{-1})$),  myosin contraction (determined by myosyn density $m_0$),  and  homeostatic pressure $p_{\rm eff} (\pi R^2)$  are in equilibrium, which provides the dependence between $m_0$ and $R$ given by the second equation in \eqref{steady_state}.  
It is convenient to choose $R$ as the bifurcation parameter in the bifurcation conditions \eqref{BifCond1}-\eqref{BifCond1bis}. 

Consider traveling wave  solutions moving with velocity $V>0$ in $x$-direction. Substitute the traveling wave ansatz 
\begin{equation}
\label{tw_ansatz}
m=m(x-Vt,y),\ \phi=\phi(x-Vt,y),\ \Omega(t)=\Omega+(Vt,0)
\end{equation}
 to \eqref{actflow_in_terms_of_phi}--\eqref{myosin_bc_I}  to derive stationary free boundary problem for the unknowns $\phi$ and $\Omega$ 
%
\begin{equation}
\label{tw_Liouvtypeeq}
\Delta \phi +\Lambda \frac{e^{\phi-Vx}}{\frac{1}{|\Omega|}\int_\Omega e^{\phi-Vx}dxdy} =\zeta \phi +p_{\rm eff}(|\Omega|)\quad \text{in}\ \Omega, \quad \partial_\nu(\phi-Vx)=0\quad \text{on}\ \partial\Omega,
\end{equation}
\begin{equation}
\label{addit_cond_tw}
\quad \zeta\phi=-\gamma \kappa\quad \text{on}\ \partial\Omega. 
\end{equation}
Indeed, \eqref{myosin_equations_I} yields $-V\partial_x m= \Delta m -\div (m \nabla \phi )$ in $\Omega$  while 
$\partial_\nu \phi=V \nu_x$ on $\partial \Omega$, then, taking into account the boundary condition $\partial_\nu m=0$, we see
that 
\begin{equation}
\label{myosin_density}
m=\Lambda e^{\phi-Vx}\Bigl/\Bigr.{\frac{1}{|\Omega|}\int_\Omega e^{\phi-Vx}dxdy}.
\end{equation}
 Here unknown positive constant    $\Lambda$ is a part of the solution (cf. spectral parameter). Integrating  \eqref{myosin_density} over $\Omega$ one sees that $\Lambda$ is the average myosin density.  For  convenience of  the analysis, we will   use the single parameter  $R$ related to the radius of the disk in steady states, via
setting $\Lambda=\Lambda(R) := p_{\rm eff}(\pi R^2)-\gamma/ R $ (c.f. \eqref{steady_state}).

%

\begin{thm} (bifurcation of traveling waves)
\label{biftwtheorem}
 Let $R_0$ be such that the solution of \eqref{BifCond1} with $R=R_0$ and 
$m_0=\Lambda(R_0)= p_{\rm eff}(\pi R_0^2)-\gamma/ R_0$ satisfies 
\eqref{BifCond1bis}. Assume also that $m_0<\zeta$, 
$p_{\rm eff}^\prime(\pi R^2_0)\leq -\gamma/(2\pi R^3_0)$ and 
\begin{equation}
\label{transversality_expl}
\frac{d}{dR}\left(\frac{\zeta I_1(R\sqrt{\zeta-\Lambda(R)})}{(\zeta-\Lambda(R))^{3/2}I_1^{\prime}(\sqrt{\zeta-\Lambda(R)})}-\frac{R\Lambda(R)}{\zeta-\Lambda(R)}\right)\Bigl.\Bigr|_{R=R_0}\not=0 . 
\end{equation}
Then there exists a family of solutions of  \eqref{tw_Liouvtypeeq}--\eqref{addit_cond_tw} 
parametrized by the velocity $V$. Moreover  if $|V|\leq V_0$ (for some $V_0>0$) then these solutions (both the function $\phi$ and the domain $\Omega$)  are smooth, depend smoothly on the parameter $V$.  When $V=0$ the solution is the radial steady state
$\Omega=B_{R_0}$, $m=m_0= p_{\rm eff}(\pi R_0^2)-\gamma/ R_0$.
\end{thm}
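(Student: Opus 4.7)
The plan is to set up Crandall--Rabinowitz bifurcation for the free-boundary traveling-wave system \eqref{tw_Liouvtypeeq}--\eqref{addit_cond_tw}, parametrized by the velocity $V$, with bifurcation from the 1-parameter family of radial steady states at $V = 0$. First I would reduce to the fixed disk $B_{R_0}$ by writing the boundary as $r = R_0 + \rho(\varphi)$ and applying the normal-derivative-preserving diffeomorphism discussed in the introduction, so that the unknowns become the pair $(\rho, \phi)$ on $B_{R_0}$ together with an auxiliary scalar $R$ (the effective radius of the background state along the branch). The system then takes the form $F(\rho, \phi, R, V) = 0$, where $F$ collects the transformed interior equation for $\phi$, the Dirichlet condition \eqref{addit_cond_tw}, and the kinematic condition $\partial_\nu(\phi - Vx) = 0$ on $\partial B_{R_0}$; the radial steady state $(\rho, \phi) = (0, \phi_0)$ at $R = R_0$, $V = 0$ is then a trivial solution of $F = 0$. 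I would work in Sobolev (or H\"older) spaces of functions even in $y$, reflecting the built-in $x$-axis symmetry, and enforce the centering condition \eqref{centeredDomAin} on $\rho$ to eliminate the $x$-shift direction.

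Next, linearize $F$ around $V = 0$, $R = R_0$, $\rho = 0$. By Lemma \ref{och_polezn_lemma}, at $\phi_1'(R_0) = 1$ the kernel on the $\cos\varphi$ sector is spanned, modulo shifts, by the bifurcation eigenvector with $m = m_0(\phi_1(r) - r)\cos\varphi$; the centering condition removes the shift eigenvector $(0, \cos\varphi)$, while treating $R$ as an unknown along the branch absorbs the $n = 0$ direction arising from $\partial_R$ of the steady-state family. Thus the kernel of the reduced linearization is exactly one-dimensional, and the range is closed of codimension one since the linearized operator is a compact perturbation of an invertible elliptic operator (using $m_0 < \zeta$ and the upper bound on $p_{\rm eff}^\prime(\pi R_0^2)$ to ensure the Fredholm structure, in line with the computation in Theorem \ref{thm_linearizeddisk}).

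The transversality step is to verify that $\partial_V D_{(\rho,\phi,R)} F$ applied to the kernel element does not lie in the range of $D_{(\rho,\phi,R)} F$. A direct computation using the explicit Bessel representation for the solution of \eqref{BifCond1} shows that the quantity inside the derivative in \eqref{transversality_expl} equals $\phi_1^\prime(R) - 1$ (up to a nonvanishing factor), so the hypothesis \eqref{transversality_expl} is precisely the Crandall--Rabinowitz transversality condition. Once the three ingredients (simple kernel, transversality, smoothness of $F$) are checked, the theorem produces a smooth branch $V \mapsto (\rho(V), \phi(V), R(V))$ for $|V| \leq V_0$ with $(\rho(0), \phi(0), R(0)) = (0, \phi_0, R_0)$, tangent to the bifurcation eigenvector at $V = 0$; smoothness of $\Omega(V)$ as a perturbation of $B_{R_0}$ and of the corresponding $m$ through \eqref{myosin_density} follows from the regularity of $\rho$ and standard elliptic regularity applied to the pulled-back system.

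The principal obstacle is the reduction to the fixed disk: pulling back the free-boundary PDEs under the nonlinear diffeomorphism produces high-order nonlinear terms (the $f_i, g_i$ mentioned in the introduction), and one must check that the resulting $F$ is $C^k$ between the chosen function spaces so that C--R applies with the desired smoothness of the branch. A subtle companion issue is the correct bookkeeping of the two ``trivial'' zero directions identified in Remark \ref{zero_ev}: the shift is killed by \eqref{centeredDomAin}, while the $R$-family is absorbed by promoting $R$ to an unknown. Only after both reductions is the effective kernel one-dimensional, so that \eqref{transversality_expl} yields a genuine bifurcation rather than a tangential crossing of branches of radial steady states.
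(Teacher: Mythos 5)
Your proposal correctly identifies Crandall--Rabinowitz and the right transversality quantity, but the functional setup you describe inverts the roles of unknown and parameter and so does not meet the hypotheses of the theorem. The paper fixes $R$ as the bifurcation parameter and takes $(\rho, V)\in C^{2,\alpha}_{\rm per}(-\pi,\pi)\times\mathbb{R}$ as unknowns: for each $(\rho,V,R)$ the interior problem \eqref{tw_Liouvtypeeq} is uniquely solvable, $\phi$ is slaved to these data, and the system reduces to the boundary operator equation $\mathcal{B}(\rho,V;R)=0$ with the trivial branch $\mathcal{B}(0,0;R)=0$ (the radial steady states). The kernel of $\mathcal{L}=\partial_{(\rho,V)}\mathcal{B}$ at $R_0$ is the one-dimensional direction $(\rho,V)=(0,1)$, not any $m$-valued eigenvector -- $m$ is already eliminated via \eqref{myosin_density}, and Lemma~\ref{och_polezn_lemma} is used only to characterize when $\tilde\phi_1(R_0,R_0)=0$, i.e.\ when $\mathcal{L}$ fails to be invertible. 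Transversality then reads $\partial_R\mathcal{L}|_{R_0}(0,1)\notin\mathrm{Range}\,\mathcal{L}$, which is \eqref{transversality_expl}.

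By contrast you declare $(\rho,\phi,R)$ to be the unknowns and take $V$ as the bifurcation parameter (your transversality check is $\partial_V D_{(\rho,\phi,R)}F$ applied to the kernel). This cannot work: Crandall--Rabinowitz requires a trivial branch $F(x_0,V)=0$ for all $V$ near $0$, and none exists -- a circular domain with $V\neq 0$ is not a traveling wave, since the Neumann condition $\partial_\nu(\phi-Vx)=0$ on a circle is inconsistent with the rest of the data. It is precisely because the kernel of $\mathcal{L}$ is the $V$-direction that the bifurcating branch comes out parametrized by $V$; hence $V$ must be one of the unknowns, not the parameter. Two smaller points: the normal-derivative-preserving diffeomorphism you invoke from the introduction is needed only for the time-dependent problem in Section~\ref{section_nonlin_stab_stst}, not for the stationary TW problem; and keeping $\phi$ as a separate unknown on the disk adds nothing, since $\phi$ is uniquely determined by solving the elliptic interior problem on the perturbed domain for each $(\rho,V,R)$.
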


%

\begin{proof} 
As above we consider $\Omega$ in polar coordinates, $\Omega =\{0 \leq r< R +\rho(\varphi)\}$. Since $\zeta > \Lambda(R_0)$, for sufficienly small $\rho$, $V$ and $R$ sufficiently close to $R_0$ there is a unique solution  
$\Phi=\Phi(x,y; V,R, \rho)$ of \eqref{tw_Liouvtypeeq}.  It depends on three parameters:
 the  scalar parameter $V$ (the prescribed velocity), the radius $R$ via the parametrization of the domain and 
$\Lambda=\Lambda(R)$, and the functional parameter
$\rho$ that describes the shape of the domain $\Omega$ or, more precisely, its deviation from the disk $B_R$. As above we assume the symmetry of the domain with respect to the $x$-axis whose shapes are described by even functions $\rho$.
 
  The condition \eqref{addit_cond_tw} on the unknown boundary, described by $\rho(\varphi)$,   rewrites as 
\begin{equation}
\label{TW_equationontheboundary}
-\gamma\frac{(R+\rho)^2 + 2(\rho^\prime)^2-\rho^{\prime\prime}(R+\rho)}{((R+\rho)^2+(\rho^\prime)^2)^{3/2}}=\zeta\Phi ((R+\rho (\varphi))\cos\varphi, (R+\rho (\varphi))\sin\varphi, V, R, \rho).
\end{equation}
As before, to get rid of infinitesimal shifts we require \eqref{centeredDomAin}. Then introducing the 
function $\mathcal{B}$ which maps from $\mathcal{X}=C^{2,\alpha}_{\rm per}(-\pi,\pi)\times \mathbb{R}\times \mathbb{R}$ to 
$\mathcal{Y}=C^{0,\alpha}_{\rm per}(-\pi,\pi)\times \mathbb{R}$: 
\begin{equation}
\label{operator_equation}
\mathcal{B}(\rho, V;R):=\left(\gamma\frac{(R+\rho)^2 + 2(\rho^\prime)^2-\rho^{\prime\prime}(R+\rho)}
{\zeta((R+\rho)^2+(\rho^\prime)^2)^{3/2}}+\Phi, \int_{-\pi}^\pi \rho\cos\varphi d\varphi\right),
\end{equation}
we rewrite problem \eqref{tw_Liouvtypeeq}--\eqref{addit_cond_tw} in the form
\begin{equation}
\label{FuncAnTWequation}
\mathcal{B}(\rho,V;R)=0. 
\end{equation}
Next we apply the Crandall-Rabinowitz bifurcation theorem \cite{CraRab1971} (Theorem 1.7),
which guarantees bifurcation of new smooth branch of solutions provided that
%
\begin{itemize}
 		\item[(i)] $\mathcal{B}((\rho,V);R)=0$ for all $R$ in a neighborhood of $R_0$;
 		\item[(ii)] there exist continuous  $\partial_{(\rho,V)}\mathcal{B}$, $\partial_R\mathcal{B}$, and $\partial^2_{(\rho,V),R}\mathcal{B}$ in a neighborhood of $(\rho,V)=0$, $V=V_0$;  
 		\item[(iii)] $\mathrm{Null}(\partial_{(\rho,V)}\mathcal{B})$ and $\mathcal{Y}\backslash \mathrm{Range}(\partial_{(\rho,V)}\mathcal{B})$  at $(\rho,V)=0$, $R=R_0$ are one-dimensional;
 		\item[(iv)]  $\partial^2_{(\rho,V),R}\mathcal{B}(\rho,V)\notin  \mathrm{Range}(\partial_{(\rho,V)}\mathcal{B})$ at $(0,R=R_0)$ 
for all 
$(\rho,V)\in \mathrm{Null}(\partial_{(\rho,V)}\mathcal{B})$.
 	\end{itemize}
Condition (i) is satisfied.  Condition (ii) can be verified as in \cite{BerFuhRyb2018}.

 	To verify (iii), we begin by calculating $\mathcal{L}:=\partial_{(\rho,V)}\mathcal{B}$ at $0$.
 Linearizing  \eqref{operator_equation} around $\rho=0$, $V=0$ we get
\begin{equation}\label{linearization}
\mathcal{L}: (\rho, V)\mapsto 
\left(-\frac{\gamma}{R^2\zeta} (\rho^{\prime\prime}+\rho)+V\partial_V \Phi(R\cos\varphi,R\sin\varphi; 0,R,0)+\langle \partial_\rho \Phi,  \rho \rangle|_{V=0, \rho=0},
\int_{-\pi}^{\pi} \rho(\varphi) \cos \varphi d\varphi
\right)
\end{equation}
Here $\langle \partial_\rho \Phi,  \rho \rangle|_{V=0, \rho=0}$ denotes the Gateaux derivative of  $\Phi$ at $V=0$ and $\rho=0$.  
We have $\langle \partial_\rho \Phi,  \rho \rangle|_{V=0, \rho=0}=-\frac{R}{\zeta}p_{\rm eff}^\prime(\pi R^2)\int_{-\pi}^\pi\rho d\varphi$ and $\partial_V \Phi(R\cos\varphi,R\sin\varphi; 0,R,0)=\tilde\phi_1(R,R)\cos\varphi$, where $\tilde \phi_1(r,R)$ solves
\begin{equation}
\label{bicond_po_drugomu}
\frac{1}{r}(r\tilde\phi^\prime_1(r,R))^\prime-\frac{1}{r^2} \tilde\phi_1(r,R) +(\Lambda(R)-\zeta)\tilde\phi_1(r,R) =\Lambda(R) r\quad 0\leq r<R,
\quad \tilde\phi_1(0,R)=0, \,\tilde\phi_1^\prime (r,R)|_{r:=R}=1.
\end{equation}
Note that if $\tilde \phi_1(R,R)\not =0$ then operator $\mathcal{L}$ has a bounded inverse. In the case $\tilde \phi_1(R,R)=0$ for $R=R_0$
(when operator $\mathcal{A}$ has an eigenvector with non-constant density $m$, see Lemma \ref{och_polezn_lemma}) the kernel of the operator $\mathcal{L}$  is one-dimensional $(\rho=0, V=1)$ and its range consists of all the pairs $(f,C)$ such that 
$\int_{-\pi}^{\pi}f(\varphi)\cos\varphi d\varphi=0$. Thus, condition (iii) holds.

It remains to verify (iv). To this end, we check if  
$\partial_R \mathcal{L}|_{R:=R_0} (0,1)$ does not belong to the range of the opeartor $\mathcal{L}$ (transversality condition), where
$$
\partial_R \mathcal{L}|_{R:=R_0}: (\rho, V)\mapsto 
\left(\frac{2\gamma}{R^3_0\zeta} (\rho^{\prime\prime}+\rho)+V\frac{d}{d R}\tilde \phi_1(R,R)\Bigl.\Bigr|_{R=R_0}\cos\varphi
+\overline{C}(R_0)\int_{-\pi}^{\pi}\rho d\varphi
,0\right),
$$
where $\overline{C}(R_0)=-\frac{1}{\zeta}\left(p_{\rm eff}^\prime(\pi R_0^2)+2\pi R_0^2 p_{\rm eff}^{\prime\prime}(\pi R_0^2)\right)$. 
Thus the transversality condition reads
\begin{equation}
\label{transversalitycondition}
\Bigl.\frac{d}{d R}
\tilde\phi_1(R,R)\Bigr|_{r:=R_0}\not =0.
\end{equation} 
In order to check this condition  we change variable in \eqref{bicond_po_drugomu} by introducing $\psi(r,R):=\tilde\phi_1(Rr,R)$, 
this leads to the problem in the unit disk:
\begin{equation}
\frac{1}{r}(r\psi^\prime(r,R))^\prime-\frac{1}{r^2} \psi(r,R) +R^2(\Lambda(R)-\zeta)\psi(r,R) =R^3\Lambda(R) r\quad 0\leq r<1,
\quad \psi(0,R)=0, \,\psi^\prime(r,R)|_{r:=1}=R.
\end{equation}
The solution of this problem is given by 
$$
\psi(r,R)=-\frac{R\Lambda(R)}{\zeta-\Lambda(R)}r+\frac{\zeta I_1(R\sqrt{\zeta-\Lambda(R)}r)}
{(\zeta-\Lambda(R))^{3/2}I_1^{\prime}(\sqrt{\zeta-\Lambda(R)})},
$$
so that condition \eqref{transversalitycondition} writes as \eqref{transversality_expl}.
\end{proof}

\begin{rem} Introduce the following  function 
\begin{equation}
\label{FunktsiyaF}
F(R):=\frac{\zeta I_1(R\sqrt{\zeta-\Lambda(R)})}{(\zeta-\Lambda(R))^{3/2}I_1^{\prime}(\sqrt{\zeta-\Lambda(R)})}-\frac{R\Lambda(R)}{\zeta-\Lambda(R)}.
\end{equation}
 Then the condition \eqref{BifCond1bis} that selects $R$ in \eqref{BifCond1}
 (which is also the necessary bifurcation condition, cf. Theorem \ref{thm_linearizeddisk}, item (ii)) 
and the transversality condition \eqref{transversality_expl}  write as follows 
\begin{equation}
\label{VseVterminahF}
F(R_0)=0, \  F^\prime(R_0)\not =0.
\end{equation}
\end{rem}

Finally, we demonstrate qualitative agreement of our analytical results with  experimental results from \cite{VerSviBor1999} (crescent shape and concentration of myosin at the rear) by computing numerically the shape  and the distribution of myosin in the cell for traveling wave solutions with small velocities $V$.   Solutions are obtained via asymptotic expansions in small velocities $V$, similarly to Appendix in \cite{BerFuhRyb2018}, by substituting ansatz  $\phi=\phi_0+V \phi_1+V^2 \phi_2+....$, $\Omega=\left\{0\leq r \leq R_0+ V \rho_1(\varphi) + V^2 \rho_2(\varphi)+...\right\}$, $\Lambda=\Lambda_0+V \Lambda_1+V^2\Lambda_2...$ into \eqref{tw_Liouvtypeeq}-\eqref{addit_cond_tw}. Results are depicted in Figure~\ref{fig:john}.  
\begin{figure}[h]
	\begin{center}
		\includegraphics[width=0.85\textwidth]{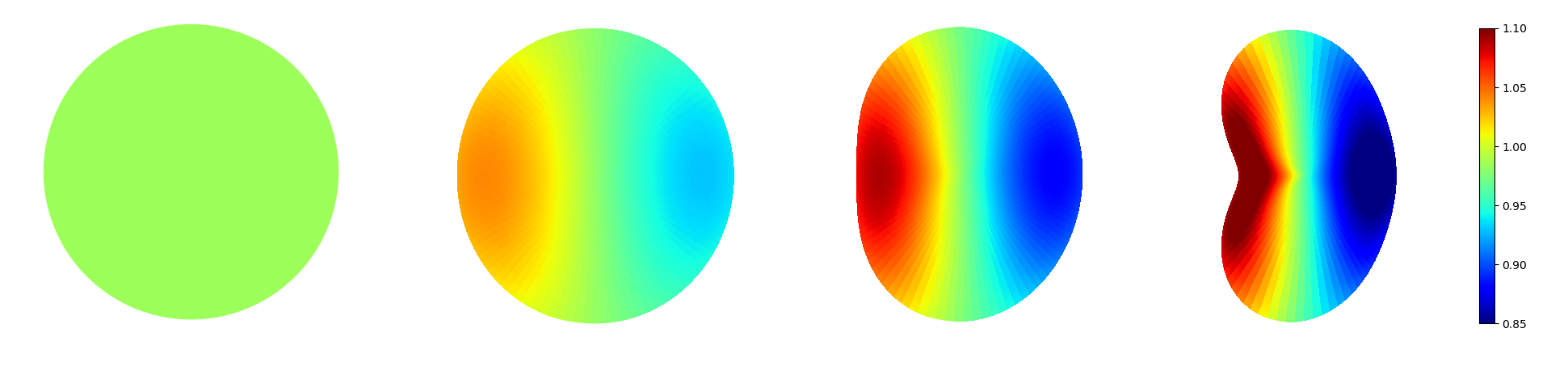}
		\caption{
			Approximate shape of traveling wave solutions for $m_0=3$, $\zeta=4$, $\gamma=0.03$, $V=0,\, 0.1,\,0.2,\,0.3$ 
			bifurcated from the radial steady
			state with $R_0=0.501$, which is a bifurcation value computed from  \eqref{BifCond1aux}--\eqref{BifCond1bisaux}.  The value $V=0$ corresponds to the circular shape, the higher $V$ is, the more pronounced  the crescent shape becomes.  The colors represent myosin density $m$: blue is for 
			lower $m$ and red is for higher $m$. \label{fig:john}
		}
	\end{center}
\end{figure}


\section{Linear stability analysis of traveling wave solutions}
\label{section_lin_stab_tw}

In this section we study linear stability of traveling wave solutions. We begin by writing down the system obtained after 
linearization of  \eqref{actflow_in_terms_of_phi}--\eqref{myosin_bc_I} around a traveling wave solution (cf. system  \eqref{RadSymLinearized1}--\eqref{ur_e_dlya_m} obtained by linearization of   \eqref{actflow_in_terms_of_phi}--\eqref{myosin_bc_I} around radial steady states).
The latter solution  is described by 
the domain $\Omega_{\rm tw}=\{0\leq \rho<R_0+\rho_{\rm tw}\}$, the potential $\phi=\Phi$ 
solving \eqref{tw_Liouvtypeeq}--\eqref{addit_cond_tw}, the myosin density $\tilde \Lambda e^{\Phi -Vx}$ with 
$\tilde\Lambda:=\Lambda |\Omega_{\rm tw}| /\int_{\Omega_{\rm tw}}  e^{\Phi -Vx}dxdy $, 
 and scalar velocity $V$ (the traveling wave solution is moving translationally in the $x$-direction).
 As before we assume the symmetry with respect to the $x$-axis of both traveling wave solution and its perturbations.
 Rewrite  \eqref{actflow_in_terms_of_phi}--\eqref{myosin_bc_I} in the system of coordinates moving with the traveling wave solution, i.e. introducing $x_{new}:=x_{old}-Vt$, and linearize around this solution, we have
\begin{equation}
\label{TwLinearized1}
\begin{aligned}
\frac{ (R_0+\rho_{\rm tw}) }{\sqrt{ (\rho_{\rm tw}^\prime)^2+(R_0+\rho_{\rm tw})^2}}\partial_t \rho=
\frac{\partial \phi}{\partial \nu}&+\rho\partial^2_{r\nu}({\Phi-Vx})+
\frac{\rho^\prime \sin \varphi +\rho\cos \varphi}{\sqrt{(\rho_{tw}^\prime)^2+(R_0+\rho_{tw})^2}}\frac{\partial}{\partial x}{(\Phi-Vx)}\\
&+\frac{-\rho^\prime \cos \varphi +\rho\sin \varphi}{\sqrt{(\rho_{tw}^\prime)^2+(R_0+\rho_{tw})^2}}\frac{\partial}{\partial y}{(\Phi-Vx)}\quad \text{on}\ \partial \Omega_{\rm tw}
\end{aligned}
\end{equation}
\begin{equation}
\label{Twnonovoeur_e}
\Delta \phi + m=\zeta \phi+p_{\rm eff}^\prime(\pi R^2)R\int_{-\pi}^{\pi}
(R_0+\rho_{tw})
\rho(\varphi)d\varphi \quad\text{in}\ \Omega_{\rm tw},
\end{equation}
\begin{equation}
\label{Twnovayakrizna_linearized}
\zeta(\phi+\rho\partial_r \Phi) =\kappa_{\rm tw}^\prime(\rho)\quad\text{on}\ \partial\Omega_{\rm tw},
\end{equation}
where
\begin{equation*}
\begin{aligned}
\kappa_{\rm tw}^\prime(\rho)=&
\frac{2(\rho_{\rm tw}+R)\rho-4\rho_{\rm tw}^\prime\rho^\prime-(\rho_{\rm tw}+R)\rho^\prime-\rho\rho_{\rm tw}^\prime }
{((\rho_{\rm tw}+R)^2+(\rho^\prime_{\rm tw})^2)^{3/2}}
\\
&-3
\frac{\rho(\rho_{\rm tw}+R)+\rho^\prime \rho_{\rm tw}^\prime}
{((\rho_{\rm tw}+R)^2+(\rho^\prime_{\rm tw})^2)^{5/2}} ((R+\rho_{\rm tw})^2+2(\rho_{\rm tw}^\prime)^2-(R+\rho_{\rm tw})\rho_{\rm tw}^\prime)
\end{aligned}
\end{equation*}

\begin{equation}
\label{Twmodified_ur_e_dlya_m}
\partial_t m=\Delta {m}+V\partial_x m -\div( \tilde \Lambda e^{\Phi-Vx} \nabla {\phi})
-\div(m\nabla \Phi) \quad\text{in}\  \Omega_{\rm tw}, 
\end{equation}

\begin{equation}
\label{Twposled_dynamic}
\rho\tilde \Lambda \partial^2_{r\nu}(e^{\Phi-Vx}) 
+\partial_\nu m+
\frac{\rho^\prime \sin \varphi +\rho\cos \varphi}{\sqrt{(\rho_{tw}^\prime)^2+(R+\rho_{tw})^2}}\frac{\partial}{\partial x}e^{\Phi-Vx}+\frac{-\rho^\prime \cos \varphi +\rho\sin \varphi}{\sqrt{(\rho_{tw}^\prime)^2+(R+\rho_{tw})^2}}\frac{\partial}{\partial y}e^{\Phi-Vx}=0\quad \text{on}\ \partial \Omega_{\rm tw}.
\end{equation}
This naturally leads to the following definition of the linearized operator:
\begin{equation}
\mathcal{A}_{\rm tw}:\left[\begin{array}{c}m\\\rho\end{array}\right]
\mapsto \left[\begin{array}{c}
\Delta {m}+V\partial_x m -\div( \tilde \Lambda e^{\Phi-Vx} \nabla {\phi})
-\div(m\nabla \Phi),\\
\text{right hand side of \eqref{TwLinearized1} }\times \frac{\sqrt{ (\rho_{\rm tw}^\prime)^2+(R_0+\rho_{\rm tw})^2}}
{R_0+\rho_{\rm tw}}\end{array}\right]
\label{Twoperator1}
\end{equation}

\begin{lem}
\label{3eigenvectors}
Let $\Phi=\Phi(x,y,V)$ and $\Omega_{\rm tw}=\{0<r<R_0+\rho_{tw}(\varphi, V)\}$ be solutions of \eqref{tw_Liouvtypeeq}--\eqref{addit_cond_tw} for $V\in (-V_0, V_0)$, and set $\tilde \Lambda:=\Lambda |\Omega_{\rm tw}| /\int_{\Omega_{\rm tw}}  e^{\Phi -Vx}dxdy$. Then the operator \eqref{Twoperator1} has zero eigenvalue of the  algebraic multiplicity (at least) three. The corresponding  eigenvectors are:

(i) the eigenvector  generated by infinitesimal shifts
\begin{equation}
\label{tw_shifts}
m_1=\tilde\Lambda \partial_x  e^{\Phi-Vx}, \quad \rho_1= \cos\varphi+\rho^\prime_{\rm tw}(\varphi)\frac{\sin\varphi}{R_0+\rho_{\rm tw}(\varphi)},
\end{equation}

(ii) the eigenvector linearly independent of  \eqref{tw_shifts} and emerging due to the total myosin mass conservation 
property,

(iii) there is also a  generalized eigenvector 
 \begin{equation}
 \label{TwderivativeinV}
 m_2= \partial_V (\tilde\Lambda e^{\Phi-Vx}), \quad \rho_2= \partial_V \rho_{\rm tw},
 \end{equation}
\end{lem}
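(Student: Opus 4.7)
The plan is to establish each of the three items in turn: (i) from translation invariance of the full system, (iii) from differentiation along the $V$-parametrized family of traveling waves, and (ii) from total-mass conservation via a Fredholm-duality argument.

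For (i), I would invoke the translation invariance of \eqref{actflow_in_terms_of_phi}--\eqref{myosin_bc_I}: the shifted family $(m^*(x-s,y),\Omega_{\rm tw}+(s,0))$ is again a TW solution, and in the reference comoving frame at velocity $V$ it becomes a one-parameter family of stationary solutions. Differentiating at $s=0$ produces an element of $\ker\mathcal{A}_{\rm tw}$. The only non-trivial bookkeeping is the nonlinear re-parametrization of the shifted boundary $\{\mathbf{r}_{\rm tw}(\tilde\varphi)+(s,0)\}$ back to the polar chart $r=R_0+\rho^s(\varphi)$ about the reference origin; a short Taylor expansion in $s$ recovers the explicit formula $\rho_1=\cos\varphi+\rho'_{\rm tw}\sin\varphi/(R_0+\rho_{\rm tw})$, while the $m$-derivative gives $m_1=\tilde\Lambda\,\partial_xe^{\Phi-Vx}$ (up to an overall sign).

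For (iii), I would use the bifurcation branch $(m_V,\rho_V)$ provided by Theorem~\ref{biftwtheorem}. In the comoving frame of the reference TW at velocity $V$, the TW at velocity $V+\epsilon$ drifts at speed $\epsilon$ in the $x$-direction, so its deviation from the reference at time $t$ is $\epsilon\big((m_2,\rho_2)-t(m_1,\rho_1)\big)+O(\epsilon^2)$. Since this is an exact small-amplitude solution of the nonlinear system, it must solve the linearized evolution $\partial_tU=\mathcal{A}_{\rm tw}U$; matching the $t$-independent and $t$-linear terms yields $\mathcal{A}_{\rm tw}(m_1,\rho_1)=0$ (consistent with (i)) together with the Jordan-chain relation $\mathcal{A}_{\rm tw}(m_2,\rho_2)=-(m_1,\rho_1)$, identifying $(m_2,\rho_2)$ as a generalized eigenvector above $(m_1,\rho_1)$.

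For (ii), I would exploit the first integral $M(t):=\int_{\Omega(t)}m\,dxdy$, whose time-independence follows from \eqref{myosin_equations}, \eqref{myosin_bc}, \eqref{KinematicBC} and the Reynolds transport theorem. Linearizing $M$ about the TW gives the continuous functional
\[
M_{\rm lin}(\tilde m,\tilde\rho)=\int_{\Omega_{\rm tw}}\tilde m\,dxdy+\int_{\partial\Omega_{\rm tw}}m^*\,\tilde\rho\,J\,d\varphi,
\]
and the conservation law becomes $M_{\rm lin}\circ\mathcal{A}_{\rm tw}=0$, i.e.\ $M_{\rm lin}\in\ker\mathcal{A}_{\rm tw}^*$. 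Two pairings are then crucial: translation invariance forces $M_{\rm lin}(m_1,\rho_1)=0$ (shifted TWs carry the same mass), while $M_{\rm lin}(m_2,\rho_2)=\Lambda\,\partial_V|\Omega_V|\neq 0$ for $V\neq 0$ by the symmetry-driven expansion $|\Omega_V|=\pi R_0^2+cV^2+O(V^4)$ with $c\neq 0$ (obtainable from the asymptotic expansion for small $V$ used for Figure~\ref{fig:john}). In the Jordan basis built from (i)--(iii) the ``top'' adjoint null vector $f_1$ (dual to $(m_1,\rho_1)$) pairs nontrivially with $(m_1,\rho_1)$ and trivially with $(m_2,\rho_2)$, whence $M_{\rm lin}$ and $f_1$ are linearly independent in $\ker\mathcal{A}_{\rm tw}^*$. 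By Fredholm index zero of $\mathcal{A}_{\rm tw}$ this forces $\dim\ker\mathcal{A}_{\rm tw}\geq 2$, producing the second true eigenvector and bringing the algebraic multiplicity of the zero eigenvalue to at least three.

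The main obstacle is the Fredholm/index-zero property of $\mathcal{A}_{\rm tw}$ in the appropriate function spaces (pairs $(m,\rho)$ with the Neumann boundary condition on $m$ and even $2\pi$-periodic $\rho$), which must be verified from the parabolic--elliptic--boundary structure of the operator and is somewhat delicate because of the coupling of the interior myosin diffusion with the curvature-driven boundary evolution; a secondary subtlety is the non-degeneracy $\partial_V|\Omega_V|\neq 0$ along the branch, which reduces to the transversality condition \eqref{transversality_expl} from the bifurcation analysis.
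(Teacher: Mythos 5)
Parts (i) and (iii) of your proposal are essentially the paper's approach: the paper verifies, by direct calculation after differentiating the translated family and the $V$-parametrized branch, that $\mathcal{A}_{\rm tw}(m_1,\rho_1)=0$ and $\mathcal{A}_{\rm tw}(m_2,\rho_2)=(m_1,\rho_1)$ (your sign convention differs immaterially).

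Part (ii) contains a genuine error in the duality bookkeeping. You introduce an ``adjoint null vector'' $f_1\in\ker\mathcal{A}_{\rm tw}^\ast$ that supposedly pairs nontrivially with $(m_1,\rho_1)$ and trivially with $(m_2,\rho_2)$. But the Jordan relation from (iii) places $(m_1,\rho_1)=\pm\mathcal{A}_{\rm tw}(m_2,\rho_2)$ in $\mathrm{Range}\,\mathcal{A}_{\rm tw}$, and every element of $\ker\mathcal{A}_{\rm tw}^\ast$ annihilates $\mathrm{Range}\,\mathcal{A}_{\rm tw}$; hence no element of $\ker\mathcal{A}_{\rm tw}^\ast$ can pair nontrivially with $(m_1,\rho_1)$, and the functional $f_1$ you posit (the member of the dual Jordan basis with $\langle f_1,(m_1,\rho_1)\rangle=1$, $\langle f_1,(m_2,\rho_2)\rangle=0$) is in fact \emph{not} in $\ker\mathcal{A}_{\rm tw}^\ast$ (it satisfies $\mathcal{A}_{\rm tw}^\ast f_1=f_2\ne0$). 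Consequently the linear independence of $M_{\rm lin}$ and $f_1$ ``in $\ker\mathcal{A}_{\rm tw}^\ast$'' does not hold, and the step to $\dim\ker\mathcal{A}_{\rm tw}^\ast\geq 2$ collapses. Worse, the pairing $M_{\rm lin}(m_2,\rho_2)\neq 0$ that you compute is precisely the signature of the \emph{bottom} of the dual Jordan chain over the single block $\{(m_1,\rho_1),(m_2,\rho_2)\}$; it is therefore consistent with the generalized zero-eigenspace having dimension exactly two, so that inequality alone cannot produce a third dimension. The paper's argument instead hinges on the orthogonality $M_{\rm lin}(m_1,\rho_1)=\int_{\Omega_{\rm tw}}\tilde\Lambda\partial_x e^{\Phi-Vx}\,dxdy+\int_{\partial\Omega_{\rm tw}}\tilde\Lambda e^{\Phi-Vx}\nu_x\,ds=0$, followed by a Fredholm-alternative step asserting that the adjoint kernel element $(m^\ast,\rho^\ast)=(1,\tilde\Lambda e^{\Phi-Vx}(R_0+\rho_{\rm tw}))$ is not annihilated by all of $\ker\mathcal{A}_{\rm tw}$ (equivalently $(m^\ast,\rho^\ast)\notin\mathrm{Range}\,\mathcal{A}_{\rm tw}^\ast$), yielding a kernel vector $(m_3,\rho_3)$ on which $M_{\rm lin}$ is nonzero, hence independent of $(m_1,\rho_1)$. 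That is the step you would need to establish directly to close your argument; what you wrote does not supply it.
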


\begin{proof} It is verified by straightforward  calculations that the pair $(m_1,\rho_1)$ given by \eqref{tw_shifts} satisfies 
$\mathcal{A}_{\rm tw}(m_1,\rho_1)=0$ and  $(m_2,\rho_2)$ given by \eqref{TwderivativeinV}  satisfies $\mathcal{A}_{\rm tw}(m_2,\rho_2)=(m_1,\rho_1)$.  Next we observe that  every solution of problem \eqref{TwLinearized1}--\eqref{Twposled_dynamic} satisfies the following  linearized version of the mass conservation property:
\begin{equation}
\label{Linearized_mass_preservation}
M(t):=\int_{\Omega_{\rm tw}} m dxdy +\int_{-\pi}^\pi (R+\rho_{\rm tw})\rho\tilde \Lambda e^{\Phi-Vx}d\varphi\ \ 
\text{is independent of}\ t,
\end{equation}
To explain \eqref{Linearized_mass_preservation}, we write a linear perturbation of the traveling wave solution as 
\begin{equation*}
m_\ve=\tilde\Lambda e^{\Phi-Vx}+\ve m, \quad \Omega_\ve=\{0\leq \rho<R_0+\rho_{\rm tw}+\ve\rho\}
 \end{equation*}
and note that 
$$
\begin{aligned}
\int_{\Omega_\ve} m_\ve dxdy -  \int_{\Omega_{tw}} \tilde\Lambda e^{\Phi-Vx} dxdy
&=
\ve\left[ \int_{\Omega_{\rm tw}} m dxdy +\lim_{\ve\to 0}\frac{1}{\ve}
\left\{\int_{\Omega_\ve}-\int_{\Omega_{\rm tw}}\right\}
\tilde \Lambda e^{\Phi-Vx}dxdy
\right]+O(\ve^2)\\
&=\ve M(t)+O(\ve^2).
\end{aligned}
$$ 
The property $\frac{d}{dt}M(t)=0$ is obtained by integrating \eqref{Twmodified_ur_e_dlya_m} over $\Omega_{\rm tw}$ and using \eqref{TwLinearized1}, \eqref{Twposled_dynamic}. In terms of the operator $\mathcal{A}_{\rm tw}$ this implies that 
the adjoint operator $\mathcal{A}_{\rm tw}^\ast$ has the eigenvector $m^\ast =1$, $\rho^\ast=\tilde\Lambda e^{\Phi-Vx}(R+\rho_{\rm tw})$. On the other hand it is not difficult to check that the Fredholm alternative can be applied to the operator 
$\mathcal{A}_{\rm tw}^\ast$ so that there is an eigenvector $(m_3,\rho_3)$ of the operator $\mathcal{A}_{\rm tw}$ which is not orthogonal to the eigenvector $(m^\ast,\rho^\ast)$ of $\mathcal{A}^\ast_{\rm tw}$ defined above. Next we note that 
$$
\int_{\Omega_{\rm tw}} m_1 m^\ast dxdy +\int_{-\pi}^\pi \rho_1 \rho^\ast d\varphi =
\int_{\Omega_{\rm tw}} \tilde\Lambda \partial_x  e^{\Phi-Vx} dxdy+\int_{\partial \Omega_{\rm tw}}\tilde\Lambda  e^{\Phi-Vx}\nu_x ds=0.
$$
Thus $(m_i,\rho_i)$, $i=1,2,3$ are linearly independent. 
\end{proof}

 For $V=0$ the structure of the spectrum of the operator 
$\mathcal{A}_{\rm tw}$ is described by Theorem \ref{thm_linearizeddisk}: it has zero eigenvalue of multiplicity three 
while other eigenvalues have negative real part. Next using Lemma \ref{3eigenvectors} by a perturbation argument we see that the   structure  of  the  spectrum  for small  but nonzero $V$ is essentially the same as for $V=0$.


\begin{thm}
Let $R_0$, $m_0(R_0):=-\gamma/ R_{0}+p_{\rm eff}(\pi R^2_{0})$ and $\zeta$  be as in Theorem \ref{thm_linearizeddisk},  i.e. 
$m_0$ does not exceed  the third eigenvalue of the operator $-\Delta$ in $B_{R_0}$ with the Neumann boundary condition on 
$\partial B_{R_0}$,  $\zeta\geq m_0$ and $p_{\rm eff}^\prime(\pi R^2_0)$ satisfies \eqref{areashrinkingprevention}.
 Assume also, that the bifurcation and transversality conditions \eqref{VseVterminahF} are satisfied. Then the linearized operator 
$\mathcal{A}_{\rm tw}$ around traveling waves  with sufficiently small velocities $V$ (described in Theorem \ref{biftwtheorem})  has zero eigenvalue with multiplicity three (see Lemma \ref{3eigenvectors}), other eigenvalues have negative real parts.
\end{thm}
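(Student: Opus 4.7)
The plan is to treat $\mathcal{A}_{\rm tw}$ as a smooth one-parameter family of (unbounded) operators indexed by $V$ and to invoke standard analytic/continuous perturbation theory of linear operators in the spirit of Kato around the reference value $V=0$, where $\mathcal{A}_{\rm tw}$ coincides with the operator $\mathcal{A}$ of Section~\ref{section_lin_an_stst}.

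\textbf{Step 1: The unperturbed operator.} When $V=0$, Theorem~\ref{biftwtheorem} gives $\Omega_{\rm tw}=B_{R_0}$, $\rho_{\rm tw}\equiv 0$, $\Phi\equiv\phi_0$ constant, and $\tilde\Lambda=m_0(R_0)$, so $\mathcal{A}_{\rm tw}|_{V=0}=\mathcal{A}$ at the radial steady state with $R=R_0$. The bifurcation condition \eqref{VseVterminahF}, i.e. $F(R_0)=0$, is exactly $\phi_1^\prime(R_0)=1$, so Theorem~\ref{thm_linearizeddisk}(ii) applies: $0$ is an eigenvalue of algebraic multiplicity three, and all remaining eigenvalues lie in a half-plane $\{\mathrm{Re}\,\lambda\leq -c_0\}$ for some $c_0>0$.

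\textbf{Step 2: Reduction to a fixed domain.} Before invoking perturbation theory, I would pull $\mathcal{A}_{\rm tw}$ back to the fixed disk $B_{R_0}$ via a $V$-dependent diffeomorphism $\Psi_V:B_{R_0}\to\Omega_{\rm tw}$ that preserves normal derivatives on the boundary (the transform alluded to in the introduction, which replaces the Hanzawa map and is needed precisely so that the Neumann condition \eqref{myosin_bc_I}, hence the boundary condition in \eqref{Twposled_dynamic}, remains homogeneous after pull-back). Since $\Phi$ and $\rho_{\rm tw}$ depend smoothly on $V$ by Theorem~\ref{biftwtheorem}, $\Psi_V$ is smooth in $V$, $\Psi_0=\mathrm{id}$, and the pulled-back operator $\widetilde{\mathcal A}_{\rm tw}(V)$ acts on the fixed phase space $\{(m,\rho):m\in H^2(B_{R_0}),\,\partial_r m=0\ \text{on}\ \partial B_{R_0},\,\rho\in H^4_{\rm even}(-\pi,\pi)\}$ with coefficients depending smoothly on $V$.

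\textbf{Step 3: Spectral perturbation.} On this fixed space, $\widetilde{\mathcal A}_{\rm tw}(V)$ is sectorial with compact resolvent (its principal part is a second-order uniformly elliptic operator coupled with a fourth-order operator in the angular variable on a compact one-dimensional boundary, both with compact inverses), so its spectrum is discrete for every $V$ near $0$. By Kato's theorem on stability of finite systems of eigenvalues, there exist $V_0>0$ and $\delta>0$ such that for $|V|<V_0$ the total spectral projection $P(V)$ onto eigenvalues in the disk $\{|\lambda|<\delta\}$ has constant rank equal to $3$, while the remaining spectrum stays in $\{\mathrm{Re}\,\lambda\leq -c_0/2\}$. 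Lemma~\ref{3eigenvectors} produces, for every small $V\neq 0$, three explicit linearly independent (generalized) eigenvectors at $\lambda=0$; their persistence shows that the entire spectral mass in $\{|\lambda|<\delta\}$ is concentrated at $0$, so $0$ is an eigenvalue of algebraic multiplicity exactly $3$ for $|V|<V_0$.

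\textbf{Main obstacle.} The hardest point is Step~2: constructing a pull-back diffeomorphism $\Psi_V$ that is smooth in $V$, maps Sobolev spaces bijectively, and, crucially, preserves the Neumann character of the boundary condition on $m$ so that the domain of $\widetilde{\mathcal A}_{\rm tw}(V)$ is independent of $V$. Without a normal-derivative-preserving transform, the boundary condition in \eqref{Twposled_dynamic} would become $V$-dependent and one could not apply Kato's theory in a fixed functional setting. Once this is done the remainder is a direct application of continuity of the discrete spectrum under smooth perturbations together with the dimension count provided by Lemma~\ref{3eigenvectors}.
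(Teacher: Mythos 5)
Your proposal follows the same line the paper itself indicates: at $V=0$ the operator $\mathcal{A}_{\rm tw}$ reduces to $\mathcal{A}$ and the bifurcation condition $F(R_0)=0$ places it in case (ii) of Theorem~\ref{thm_linearizeddisk} (zero eigenvalue of multiplicity three, all other eigenvalues with negative real part), and for small nonzero $V$ the three generalized eigenvectors from Lemma~\ref{3eigenvectors} together with a spectral perturbation argument carry this structure over. The paper treats this as a two-sentence remark, while you have fleshed out the technical underpinning (normal-derivative-preserving pull-back to a fixed disk, Kato perturbation of the discrete spectrum) correctly and in the right spirit.
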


Even though linear stability analysis of traveling waves  is inconclusive,  a valuable insight can be obtained from numerical simulations of the bifurcation from steady states to traveling waves. To identify the 
type of the bifurcation we expand solutions of the free boundary problem \eqref{tw_Liouvtypeeq}-\eqref{addit_cond_tw}    
for traveling waves in powers of the (small) 
velocities $V$. In particular, computing two terms of the expansions 
\begin{equation}
\label{ExPaNsIoNs}
\begin{aligned}
&\Omega=\{0\leq r<R_0+V^2\rho_2(\varphi)+\dots\},\quad \Phi=\phi_0+V \phi_1(r)\cos\varphi
+V^2\phi_2(r,\varphi)+\dots,
 \\
&\tilde{\Lambda}=m_0+V^2 e^{\phi_0}\tilde\Lambda_2+\dots,
\end{aligned}
\end{equation} 
where 
$\tilde{\Lambda}=|\Omega|\Lambda/\int_\Omega e^{\Phi-Vx}dxdy
$, amounts to solving problem 
\eqref{BifCond1}--\eqref{BifCond1bis} for 
the function $\phi_1(r)$ and also solving the following equation
\begin{equation}
\label{phi2}
\Delta\phi_2+m_0
\left(\phi_2+\frac{1}{2}(\phi_1-x)^2
\right)
=\zeta\phi_2-e^{\phi_0}\tilde{\Lambda}_2
+p_{\rm eff}^\prime(\pi R^2)R\int_0^{2\pi}\rho_2 d\varphi \quad\text{in}\ B_{R_0}
\end{equation}
with two boundary conditions
\begin{equation}
\label{vbc2}
\partial_r\phi_2=0\quad\text{on}\ \partial B_{R_0},
\end{equation}
and
\begin{equation}
\label{cbc2}
\zeta\phi_2=\frac{\gamma}{R^2}(\rho_2+\partial^2_\varphi\rho_2)
\quad\text{on}\ \partial B_{R_0}.
\end{equation}
Equation \eqref{phi2} and boundary conditions \eqref{vbc2}, \eqref{cbc2} are obtained by 
 substituting the expansions \eqref{ExPaNsIoNs} into 
\eqref{tw_Liouvtypeeq}-\eqref{addit_cond_tw} and collecting the terms of the order $V^2$, while the unknown constant 
$\tilde{\Lambda}_2$ is determined via the solvability condition which appears when considering terms of the order $V^3$.  
Next we present  the plot of the bifurcation picture based on numerics for first two terms in expansions  \eqref{ExPaNsIoNs}.  Since  the total myosin mass $M$ is an invariant 
for \eqref{actflow_in_terms_of_phi}--\eqref{myosin_bc_I} ($M$ is conserved in time) it is natural to choose it as the bifurcation parameter instead of $R$ when investigating the bifurcation. Figure \ref{fig:john1} depicts  dependence of the velocity of traveling waves on the 
total myosin mass 
$M=\tilde{\Lambda}\int_\Omega e^{\Phi-Vx}dxdy$, 
(including steady states when $V=0$). 
Observe that an increase of $M$ in the neighborhood of the bifurcation point $M_{cr}$  leads to a transition from stable to unstable steady states. Moreover, in some range of parameters
$R_0$ and $\gamma$ 
the myosin mass $M$ of the traveling wave solution first decreases with
the velocity $V$ then the graph bends and $M$  starts to increase. 
%
This numerical results \footnote{Numerical calculations depicted on figures Fig.\ref{fig:john} and Fig.\ref{fig:john1} were carried out by PSU students J.King and A.Safsten \textcolor{red}
	{who were supported from  }}
  suggest the following conjecture on stability/instability of the traveling waves, which reveals subcritical pitchfork bifurcation and will be rigorously proved in the upcoming work. 
  
\begin{figure}[h]
	\begin{center}
		\includegraphics[width=0.85\textwidth]{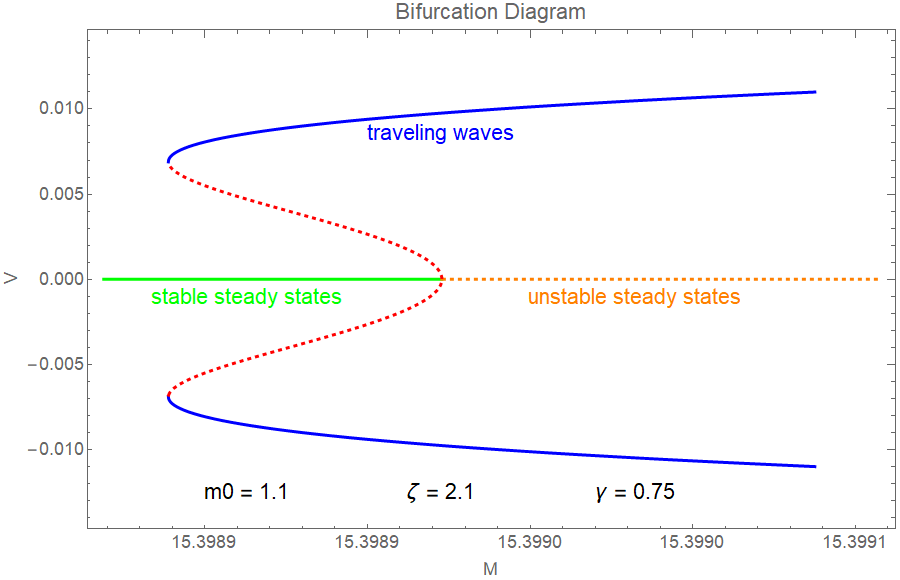}
		\caption{
			Bifurcation diagram,   $m_0=1.1$, $\zeta=2.1$, $\gamma=0.75$. \label{fig:john1}
		}
	\end{center}
\end{figure}

\begin{conj} 
In the range of the parameters such that the total myosin mass $M$ of traveling wave solutions  decreases in $V$
for small $V$, these solutions are nonlinearly 
unstable. 
\end{conj}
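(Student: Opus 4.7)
The plan is to reduce the infinite-dimensional dynamics to a finite-dimensional center manifold and then invoke classical subcritical pitchfork instability. By the Theorem just above, for small wave speed $V$ the linearization $\mathcal{A}_{\rm tw}$ has a three-dimensional generalized kernel (spanned by the shift, mass-conservation and $\partial_V$ eigenvectors of Lemma \ref{3eigenvectors}), while the rest of its spectrum lies in the open left half-plane. Using the fixed-domain reformulation and the parabolic regularity estimates developed for Theorem \ref{nonlinear_stability}, one constructs a smooth local center manifold $\mathcal{M}_{\rm c}$ of dimension three, invariant under the nonlinear semiflow of \eqref{actflow_in_terms_of_phi}--\eqref{myosin_bc_I}; standard invariant-manifold theory then reduces the nonlinear stability question, modulo exponentially decaying transients in the stable subspace, to the finite-dimensional reduced flow on $\mathcal{M}_{\rm c}$.

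Next I would factor out the two trivial directions. Translational invariance yields a shift-quotient, and the conserved mass $M$ foliates $\mathcal{M}_{\rm c}$ by the level sets $\{M=M_0\}$; on each such level set modulo shifts, the reduced dynamics is one-dimensional. Introducing a smooth coordinate $A$ on this quotient, chosen so that $A=0$ is the radial steady state of mass $M_0$ and $A=A^*(M_0)$ corresponds to the traveling wave, the reflection symmetry $x\to -x$ (which flips $A\to -A$) forces the reduced flow into the pitchfork normal form
\begin{equation*}
\dot A \;=\; \alpha\,(M_0 - M_{\rm cr})\,A \;+\; \beta\,A^3 \;+\; O(A^5),
\end{equation*}
where $M_{\rm cr}$ is the bifurcation mass, $\alpha\neq 0$ by the transversality condition \eqref{transversality_expl}, and one may normalize $\alpha>0$. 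Matching the fixed point $A^*$ of this normal form with the traveling-wave expansion \eqref{ExPaNsIoNs} gives $A^*(V)=c_1 V+O(V^3)$ with $c_1\neq 0$, and hence
\begin{equation*}
M(V) - M_{\rm cr} \;=\; -\,\frac{\beta}{\alpha}\,c_1^2\,V^2 \;+\; O(V^4),
\end{equation*}
so the hypothesis ``$M$ decreasing in $V$ for small $V>0$'' translates precisely into $\beta/\alpha>0$, i.e.\ the subcritical case.

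In this subcritical regime the linearization of the normal form at $A^*$ equals $-2\alpha(M_0-M_{\rm cr})>0$, so the traveling wave is a hyperbolic unstable fixed point of the reduced ODE; the standard persistence of unstable manifolds on $\mathcal{M}_{\rm c}$ then produces initial data arbitrarily close to the traveling wave whose orbits leave a fixed neighborhood of the traveling-wave group orbit in finite time, giving nonlinear instability for the full problem. The main obstacle will be the rigorous construction of $\mathcal{M}_{\rm c}$: Vanderbauwhede-type center manifold theorems require a smooth semiflow on a Banach space with a spectral gap, which in the present free-boundary setting must be supplied by the normal-derivative-preserving transform of Section \ref{section_nonlin_stab_stst} together with parabolic Schauder estimates uniform in the wave parameter $V$, and which must moreover be compatible with the translational and mass-conservation structures that are essential to the reduction. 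A secondary task is to verify that the sign of $\beta$ extracted from the Lyapunov--Schmidt reduction coincides with $-\,\mathrm{sign}(M''(0))$, which amounts to tracking cubic interactions of $\phi_1$ from \eqref{BifCond1} and $\phi_2$ from \eqref{phi2}--\eqref{cbc2} through the bifurcation expansion.
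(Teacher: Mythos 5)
The paper gives \emph{no} proof of this statement: it is explicitly presented as a conjecture, motivated by the asymptotic expansions \eqref{ExPaNsIoNs} and the numerics in Fig.~\ref{fig:john1}, with the authors writing that it ``will be rigorously proved in the upcoming work.'' So there is no paper proof to compare against; I can only evaluate your plan on its own merits.

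Your outline is the natural route (center manifold, quotient by translations, foliation by the conserved mass $M$, reflection symmetry forcing a pitchfork normal form, and then hyperbolicity of the nontrivial branch in the subcritical case), and the conclusion you draw from it is consistent with the conjecture. But two of the steps you flag as ``obstacles'' are in fact the entire difficulty, and as written they remain gaps. First, the center-manifold construction: the paper's nonlinear machinery (Lemma~\ref{Dlinnaya_i_pechalnaya_lemma}, Corollary~\ref{cor_pro_l_infty_bounds}, the transform \eqref{ne_hanzawa}, the fixed point in the space $Y$ of \eqref{prostir}) is tailored to proving \emph{decay toward the steady state}, not to building an invariant finite-dimensional manifold; the nonlinear terms $f_1,f_2,g_1,g_2$ lose derivatives, so a Vanderbauwhede- or Haragus--Iooss-type theorem does not apply off the shelf, and you would need a maximal-regularity framework (or a quasilinear center-manifold theorem in the spirit of the Hanzawa-transform literature) that the paper does not supply and you do not construct. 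Second, the identification of ``$M$ decreasing in $V$'' with ``$\beta/\alpha>0$'' is asserted by matching $A^*(V)$ with the expansion \eqref{ExPaNsIoNs}; but the relation $M(V)-M_{\rm cr}=-(\beta/\alpha)c_1^2V^2+O(V^4)$ presupposes a specific normalization of the unfolding parameter (here $M_0-M_{\rm cr}$), and verifying it requires carrying the Lyapunov--Schmidt computation through the cubic order using $\phi_1$ from \eqref{BifCond1} and $\phi_2$ from \eqref{phi2}--\eqref{cbc2} \emph{and} checking that the reduced dynamical coefficient agrees with the static bifurcation coefficient. You name this task but do not carry it out, and without it the implication from the hypothesis of the conjecture to the sign condition on $\beta$ is not established. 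In short: correct strategy, but the two acknowledged obstacles are exactly where the proof lives, and neither is closed.
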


This Conjecture explains the nature of symmetry breaking at the onset of motion via instability of traveling for small velocities. Such a discontinuous transition from the rest to the 
steady motion has been observed in experiments and in direct numerical simulations of free boundary and phase field models 
\cite{BarLeeAllTheMog2015},\cite{ZieAra2015}. Note that the
plots of the asymptotic solutions of the model \eqref{actflow_in_terms_of_phi}--\eqref{myosin_bc_I} are obtained via rigorous analysis based on the Crandall-Rabinowitz bifurcation theorem and spectral analysis. In the one-dimensional model \cite{RecPutTru2013},\cite{RecPutTru2015} which is generalized in this work the supercritical pitchfork bifurcation to traveling waves is observed. This underscores the difference between the 2D model and its 1D prototype. Note that more sophisticated models in 1D also capture
subcritical bifurcation \cite{PutRecTru2018}.

\section{Nonlinear stability of radially symmetric steady states}
\label{section_nonlin_stab_stst}

As shown in Section \ref{section_lin_an_stst}, the linearized operator around radially symmetric steady states always has zero eigenvalue and therefore linear stability analysis is inconclusive for the nonlinear stability problem. Although Lyapunov function is not known in this problem,  we show that the invariant 
\begin{equation} \label{invariant}
\int_{\Omega(t)}m(x,y,t)dxdy
\end{equation}
 (total myosin mass) replaces Lyapunov function in the proof of nonlinear stability. This invariant corresponds to the eigenvector described in (ii) Remark \ref{zero_ev}  
 in the following sense. If the nonlinear problem has  such invariant, then the corresponding linearized problem also  has  analogous  invariant obtained by linearization of \eqref{invariant} in $\Omega(t)$ and $m(x,y,t)$. This linear invariant is the eigenvector of the adjoint linearized operator. Recall that the linearized operator has  another eigenvector   (see  (i) in Remark  \ref{zero_ev}) due to translational invariance of the problem.  In the stability analysis  below this eigenvector is taken into  account by the appropriate choice of the moving frame. 

Consider a radially symmetric  steady state with $R=R_0$ from the family \eqref{steady_state} and  assume that  $R_0$ 
is  such that   the following conditions hold 

(i)
\begin{equation}
\label{treteeigenval}
m_0:=-\gamma/ R_{0}+p_{\rm eff}(\pi R^2_{0}) \leq \lambda_3,  \quad \zeta>m_0, 
\end{equation}
where  $\lambda_3$ is the third eigenvalue of the operator $-\Delta$ in $B_{R_0}$ with the Neumann boundary condition on $\partial B_{R_0}$.

(ii)
 the hydrostatic pressure $p_{\rm eff}$ satisfies 
\begin{equation}
\label{hydrostat}
p_{\rm eff}^\prime(\pi R^2_0)< 
-\Bigl(\gamma/R_0 +2m_0 \Bigr)/(2\pi R^2_0), 
\end{equation}

(iii) \begin{equation}
\label{glavnyicondition}
\phi_1^{\prime}(R_0)<1,
\end{equation}
where  $\phi_1$ is the solution of 
\eqref{BifCond1} with $R=R_0$ (cf. Theorem \ref{thm_linearizeddisk}(i)).

\begin{thm}
\label{nonlinear_stability}
Let  radially symmetric steady  state \eqref{steady_state} with $R=R_0$ satisfy conditions  \eqref{treteeigenval}-\eqref{glavnyicondition}, then this steady state is stable in the following sense. 
	For any $\ve >0$ there exists $\delta (\ve) >0$ such that if the initial data 
	satisfies
	\begin{align}
	\label{firstvozmusch}
     &\Omega(0)=\{0\leq r<R_0+\delta \rho(\varphi)\} \quad \text{with}\ \|\rho_0\|_{H^{4}(-\pi,\pi)}<1,\\
     \label{secondvozmusch}
     &\|m(x,y,0)-m_0\|_{H^2(\Omega(0))}<\delta,
	\end{align}
$\frac{\partial}{\partial \nu} m\bigl|_{t=0}\bigr.=0$ on $\partial \Omega(0)$, and $\int_{\Omega(0)} m(x,y,0)dxdy=m_0 \pi R_0^2$,
then the solution $\Omega(t)$, $m(x,y,t)$ exists for all $t>0$ and satisfies
\begin{align}
\label{firstrezultat}
&\Omega(t)=\ve (X_c(t),0)+ \{0\leq r<R_0+\ve \rho(\varphi,t)\} \quad \text{with}\ \|\rho(\,\cdot,\,t)\|_{H^{4}(-\pi.\pi)}<1, 
|X_c(t)|\leq C,\\
\label{secondrezultat}
&\|m(x,y,t)-m_0\|_{H^2( \Omega(t))}<\ve,
\end{align}
where $\ve (X_c(t), 0) $ is shifted location of the linearized center of mass of $\partial \Omega(t)$ defined in \eqref{centeredDomAin}.
Moreover $ \|m(x,y,t)-m_0\|_{H^2( \Omega(t))}\to 0$, $ \|\rho(\,\cdot,\,t)\|_{H^{4}(-\pi.\pi)}\to 0$ as $t\to\infty$.
\end{thm}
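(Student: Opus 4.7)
My plan is to reduce the free boundary evolution to a quasilinear system on the fixed disk $B_{R_0}$ via the normal-derivative-preserving change of variables advertised in the introduction, and then to establish stability by combining exponential decay of the linearized semigroup $e^{t\mathcal{A}}$ on a codimension-two invariant subspace with a contraction argument for the nonlinear remainder. The two zero eigenvectors of $\mathcal{A}$ identified in Remark \ref{zero_ev} are handled by two distinct invariances of the original problem \eqref{actflow_in_terms_of_phi}--\eqref{myosin_bc_I}: translational invariance kills the $(0,\cos\varphi)$ direction, while conservation of the total myosin mass $\int_{\Omega(t)} m\,dxdy$ kills the second zero mode coming from the radius parameter. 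Since no Lyapunov function is available, the conserved mass plays that role in the control of one of the two neutral directions.

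\textbf{Handling the two zero modes.} First, I write the domain in the moving frame via \eqref{def_of_Omega} and impose the centering condition \eqref{centeredDomAin}; that condition determines $\dot X_c(t)$ uniquely as the $\cos\varphi$-Fourier coefficient of the boundary velocity $\partial_\nu\phi$, exactly as appears in the second component of \eqref{operator1}, and projects the evolution onto the complement of the shift eigenvector. The boundedness $|X_c(t)|\leq C$ required by \eqref{firstrezultat} will then follow from the integrability in time of $\dot X_c(t)$ via the decay proved below. Second, the hypothesis $\int_{\Omega(0)}m\,dxdy=m_0\pi R_0^2$, together with the exact conservation law $\frac{d}{dt}\int_{\Omega(t)}m\,dxdy=0$ (which follows from \eqref{myosin_equations_I}--\eqref{myosin_bc_I} and \eqref{KinematicBC}), enforces a nonlinear scalar constraint whose linearization at the equilibrium is precisely orthogonality to the left eigenvector of $\mathcal{A}$ associated with the radius mode of Remark \ref{zero_ev}(ii). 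On the resulting codimension-two invariant subspace, Theorem \ref{thm_linearizeddisk}(i) combined with \eqref{treteeigenval}--\eqref{glavnyicondition} guarantees that all spectral points have strictly negative real part, yielding a linear estimate of the form
\begin{equation*}
\|e^{t\mathcal{A}}U_0\|_{H^2(B_{R_0})\times H^{4}(-\pi,\pi)}\leq Ce^{-\omega t}\|U_0\|_{H^2(B_{R_0})\times H^{4}(-\pi,\pi)}
\end{equation*}
for some $\omega>0$, together with analogous smoothing estimates in the maximal-regularity scale.

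\textbf{Reduction to the fixed disk and fixed point.} Pulling the equations back to $B_{R_0}$ through the chosen transform produces a system of the schematic form $\partial_t U=\mathcal{A}U+N(U)$, with $U=(m-m_0,\rho)$ and $N$ collecting nonlinear terms of the type $f_i,g_i$ alluded to in the introduction; a Taylor expansion at $U=0$ shows $\|N(U)\|\lesssim\|U\|^2$ in the appropriate norms. Writing Duhamel's formula
\begin{equation*}
U(t)=e^{t\mathcal{A}}U_0+\int_0^t e^{(t-s)\mathcal{A}}N(U)(s)\,ds
\end{equation*}
restricted to the codimension-two constraint manifold described above, I will set up a contraction mapping on a small ball of radius $\ve$ in a space such as $L^\infty_t(H^2\times H^{4})\cap L^2_t(H^{3}\times H^{9/2})$ with an exponential weight $e^{\omega' t}$, $0<\omega'<\omega$. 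Smallness of $\delta$ in \eqref{firstvozmusch}--\eqref{secondvozmusch} then yields global existence, the bounds \eqref{firstrezultat}--\eqref{secondrezultat}, and decay to zero as $t\to\infty$.

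\textbf{Main obstacle.} The hardest step is matching the exponential decay of $e^{t\mathcal{A}}$ on the codimension-two subspace against the high order of differentiation of $\rho$ that enters the coefficients of the pulled-back equations: although the new transform is designed to preserve the Neumann condition on $m$, the price paid is that third- and fourth-order angular derivatives of $\rho$ appear in the bulk, so one must establish an optimal (maximal) regularity theory for the linearized parabolic/elliptic system with dynamic boundary condition in precisely the Sobolev scale in which $N$ is quadratically bounded. A secondary difficulty is the simultaneous enforcement of the two nonlinear projections (shift and mass) throughout the iteration; this requires parametrizing the constraint manifold near the equilibrium and verifying that the implicit corrections to $X_c(t)$ and to the effective radius remain of the same order as $U$, so as not to spoil the linear decay.
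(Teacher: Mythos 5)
Your overall plan — reduce to the fixed disk with a normal-derivative preserving transform, exploit the linearized semigroup's decay modulo zero modes, handle the shift mode by a moving frame and the radius mode by mass conservation, and close with a fixed point/Duhamel argument — matches the paper's architecture. The meaningful divergence is in how you neutralize the radius zero mode and in the functional setting for the contraction, and this is where I believe you have a real gap.

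You propose to restrict Duhamel's formula to ``the codimension-two constraint manifold'' and then run a contraction in a norm carrying an exponential weight $e^{\omega' t}$. This presupposes that the nonlinear remainder $N(U)$, evaluated on (a parametrization of) the constraint manifold, produces no component along the neutral eigenvector $U_1$ of Remark \ref{zero_ev}(ii); otherwise $\int_0^t e^{(t-s)\mathcal{A}}N(U)(s)\,ds$ contains a slowly varying $U_1$-contribution that cannot be absorbed into an exponentially weighted ball. But the mass invariant $\int_{\Omega(t)}m\,dxdy$ is a \emph{nonlinear} functional of $(m,\Omega)$ and its level set is tangent, not equal, to the linear codimension-two subspace on which $e^{t\mathcal{A}}$ decays. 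Even if the solution stays on the nonlinear mass level set, the pulled-back nonlinearity $(\tilde f_2,\tilde g_2)$ does in general have a nonzero projection onto $U_1$, which is why Lemma \ref{Dlinnaya_i_pechalnaya_lemma}(iii) carries a constant growing like $(1+T^2)$ rather than an exponential rate, and why Corollary \ref{cor_pro_l_infty_bounds} gives only a uniform $L^\infty_t$ bound, not decay. The paper therefore does not use exponentially weighted norms at all in the fixed point step; it solves on each fixed interval $[0,T]$, obtains $\|G_\ve(\mu,\varrho)\|_Y\leq \ve C(1+T) I_{0,\ve}$ in \eqref{okonchat_pobeda}, then evaluates the decomposition $(m_\ve,\rho_\ve)=C_{0,\ve}U_1/\Pi+\tilde U_\ve+(\mu_\ve,\varrho_\ve)$ at a carefully chosen time $T^*\sim \theta^{-1}\log(1/\ve)$, and only \emph{at that late time} invokes the nonlinear mass conservation (together with the explicit Jacobian of the transform \eqref{ne_hanzawa}) to extract the cancellation $C_{0,\ve}=O(\ve I_{0,\ve}\log(1/\ve))$. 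This produces a contraction by a factor $\sqrt{\ve}$ over intervals of length $\sim\log(1/\ve)$, and exponential decay is obtained by iterating this discrete step, not by a weighted Duhamel estimate. Your secondary remark about ``parametrizing the constraint manifold'' gestures at the right issue, but you would need to actually carry out the analogue of this $T^*$-iteration (or prove that $N(U)$ projected onto $U_1$ is truly quadratic in the manifold parametrization uniformly in $T$) before your scheme closes.

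A smaller technical caveat: the decay estimate you assert, namely $\|e^{t\mathcal{A}}U_0\|_{H^2\times H^4}\leq Ce^{-\omega t}\|U_0\|_{H^2\times H^4}$ for all $t\geq 0$, is stronger than what the paper actually establishes. Lemma \ref{Dlinnaya_i_pechalnaya_lemma}(i) gives decay in $L^2\times H^1$ for all $t$, and the improved estimate \eqref{exp_decay_bis} in $H^2\times H^4$ only for $t>1$ (parabolic smoothing takes time); for short times one only has the uniform bound of Corollary \ref{cor_pro_l_infty_bounds} in $H^1\times H^4$. Also the maximal regularity scale that makes the quadratic nonlinearity (containing $\rho^{\prime\prime}$ inside the curvature and second derivatives of $\phi$ in $f_1,f_2$) close is $L^2_t H^2\times L^2_t H^{11/2}$ with $\partial_t\in L^2_t(L^2\times H^{5/2})$ as in \eqref{prostir}, not the $H^3\times H^{9/2}$ scale you suggest; without identifying the precise exponents it is not clear your iteration closes.
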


\begin{proof} One can show by using Hille-Yosida theorem that the operator $\mathcal{A}$ is a generator of the $C_0$-semigroup 
$e^{\mathcal{A}t}U$ in the space $(m,\rho)=:U\in L^2(B_{R_0})\times (H^{1}_{\rm per}(-\pi,\pi)\setminus\{\cos\varphi\})$. 
As in Theorem \ref{thm_linearizeddisk}
introduce $\tilde\phi:= \phi+ {R_0}\frac{ p_{\rm eff}^\prime(\pi {R_0}^2)}{\zeta}\int_{-\pi}^{\pi}\rho d\varphi$. 
The operator $\lambda I-\mathcal {A}$ is defined for $\lambda>0$ 
via the bilinear form on $H^1(B_{R_0})\times(H^{7/2}_{\rm per}(-\pi,\pi)\setminus \{\cos\varphi\})$
$$
\begin{aligned}
(f,g)=(\lambda I -\mathcal{A})(m,\rho)\ \Longleftrightarrow\ &\int_{B_{R_0}}\nabla m\cdot \nabla \mu dxdy+(\lambda -m_0)\int_{B_{R_0}} m\mu dxdy\\
& +  \int_{B_{R_0}}\nabla \tilde\phi\cdot \nabla \tilde\psi dxdy+\zeta \int_{B_{R_0}}\tilde \phi \tilde \psi dxdy+\frac{\lambda \gamma}{R_0 \zeta}\int_{-\pi}^{\pi}(\rho^\prime\varrho^\prime-\tilde\rho\tilde\varrho) d\varphi
\\
&+ \zeta m_0\int_{B_{R_0}}\tilde \phi \mu dxdy
-\int_{B_{R_0}}m\tilde \psi dxdy\\
&
=\int_{B_{R_0}}f\mu dxdy+\frac{\lambda \gamma}{R_0 \zeta}\int_{-\pi}^{\pi}(g^\prime\tilde\varrho^\prime-g\tilde\varrho) d\varphi, \\  &\quad\forall \mu\in H^1(B_{R_0}), \ 
\varrho \in H^{7/2}_{\rm per}(-\pi,\pi)\setminus \{\cos\varphi\},
\end{aligned}
$$
where $\tilde \rho=\rho+{R_0^3}\frac{ p_{\rm eff}^\prime(\pi {R_0}^2)}{\gamma}\int_{-\pi}^{\pi}\rho d\varphi$,
$\tilde \varrho=\varrho+{R_0^3}\frac{ p_{\rm eff}^\prime(\pi {R_0}^2)}{\gamma}\int_{-\pi}^{\pi}\varrho d\varphi$, 
$\tilde\phi= \phi+ {R_0}\frac{ p_{\rm eff}^\prime(\pi {R_0}^2)}{\zeta}\int_{-\pi}^{\pi}\rho d\varphi$ and $\phi$ solves \eqref{ur_e_dlyacrtin}--\eqref{krizna_linearized}, and $\tilde\psi= \psi+ {R_0}\frac{ p_{\rm eff}^\prime(\pi {R_0}^2)}{\zeta}\int_{-\pi}^{\pi}\varrho d\varphi$, $\psi$ solving \eqref{ur_e_dlyacrtin}--\eqref{krizna_linearized} with $\mu$ in place of $m$ and 
$\varrho$ in place of $\rho$.

In order to proceed with the proof of nonlinear stability in Theorem~\ref{nonlinear_stability} we first show the regularity and exponential decay of the semigroup $e^{\mathcal{A}t}$ generated by linearized operator $\mathcal{A}$.
\begin{lem}
\label{Dlinnaya_i_pechalnaya_lemma}
 (regularity and decay properties  of solutions of the linearized problem \eqref{RadSymLinearized1}--\eqref{ur_e_dlya_m}) 
Under the conditions of Theorem \ref{nonlinear_stability}, the semigroup $e^{\mathcal{A}t}$ (where $\mathcal{A}$ is defined in \eqref{operator1}) has the following properties:
\begin{itemize}
\item[(i)] (decay property) For any initial data $U(0)=(m(x,y,0),\rho(\varphi,0))\in L^2(B_{R_0})\times (H^{1}_{\rm per}(-\pi,\pi)\setminus\{\cos\varphi\})$ the solution $U(t)=e^{\mathcal{A}t}U(0)$ of system \eqref{RadSymLinearized1}--\eqref{ur_e_dlya_m} is represented as 
\begin{equation}
\label{NashaTsel'}
\begin{aligned}
U(t)=c U_1/\Pi +\tilde U(t),
\end{aligned}
\end{equation}
where $c=\int_{B_{R_0}}m(\tilde x,\tilde y,0) d\tilde xd\tilde y+m_0\int_{-\pi}^\pi \rho(\tilde \varphi,0) {R_0} d\tilde \varphi$, 
$U_1=((\gamma/{R_0}+
2\pi p_{\rm eff}^\prime(\pi {R_0}^2){R_0}, 1)$,  is the eigenvector of the operator $\mathcal{A}$ corresponding to zero eigenvalue, see  Remark \ref{zero_ev},
$\Pi=\pi {R_0}^2 (\gamma/R_0+
2\pi p_{\rm eff}^\prime(\pi {R_0}^2){R_0})+2\pi {R_0} m_0$, and
\begin{equation}\label{exp_decay}
\begin{aligned}
\|\tilde U(t)\|_{ L^2(B_{R_0})\times (H^{1}(-\pi,\pi))}\leq C e^{-\theta t}  \| U(0)\|_{ L^2(B_{R_0})\times (H^{1}(-\pi,\pi))}
\end{aligned}
\end{equation}
with some constants $\theta>0$, $C$. Moreover, for $t>1$ estimate \eqref{exp_decay} improves to 
\begin{equation}\label{exp_decay_bis}
\begin{aligned}
\|\tilde U(t)\|_{ H^2(B_{R_0})\times (H^{4}(-\pi,\pi))}\leq C_1 e^{-\theta t}  \| U(0)\|_{ L^2(B_{R_0})\times (H^{1}(-\pi,\pi))}.
\end{aligned}
\end{equation}

\item[(ii)] (regularization) If $U(0)\in H^2_N(B_{R_0})\times( H^4_{\rm per}(-\pi,\pi)\setminus\{\cos\varphi\})$ then representation \eqref{NashaTsel'} holds with 
$\tilde U\in L^2((0,+\infty);  H^2_N(B_{R_0})\times 
(H^{11/2}_{\rm per}(-\pi,\pi)\setminus\{\cos\varphi\})$ and 
$$
\int_0^\infty \|\tilde U(t)\|^2_{ H^2(B_{R_0})\times 
H^{11/2}(-\pi,\pi)}dt+
\int_0^\infty \|\frac{d}{dt} \tilde U(t)\|^2_{ L^2(B_{R_0})\times 
H^{5/2}(-\pi,\pi)}dt
\leq C  \|\tilde U(0)\|^2_{ H^2(B_{R_0})\times  H^{4}(-\pi,\pi)}.
$$
\item[(iii)] For any $T>0$ and $F(t)\in L^2((0,T);  L^2(B_{R_0})\times 
(H^{5/2}_{\rm per}(-\pi,\pi)\setminus\{\cos\varphi\}))$ the solution $U(t)=\int_0^te^{A(t-\tau)}F(\tau)d\tau$ of the 
Cauchy problem  $\frac{d}{dt}U(t)=\mathcal{A}U(t)+F(t)$, $U(0)=0$ belongs to 
$$
L^2((0,T); H^2_{N}(B_{R_0})\times 
H^{11/2}(-\pi,\pi))\cap H^1((0,T); L^2(B_{R_0})\times 
H^{5/2}(-\pi,\pi))
$$ 
 and satisfies 
\begin{equation*}
\begin{aligned}
\int_0^T \| U(t)\|^2_{ H^2(B_{R_0})\times 
H^{11/2}(-\pi,\pi)}dt& + \int_0^T\Bigl\|\frac{d}{dt} U(t)\Bigr\|^2_{ L^2(B_{R_0})\times 
H^{5/2}(-\pi,\pi)}dt\\
&
\leq C(1+ T ^2) \int_0^T \| F(t)\|^2_{ L^2(B_{R_0})\times 
H^{5/2}(-\pi,\pi)}dt,\quad \forall 0\leq \tau\leq T,
\end{aligned}
\end{equation*}
where $C$ is independent of $T$.
\end{itemize}
\end{lem}

\begin{rem}
\label{remarkadlinnaya}
		Statement (i) establishes exponential stability of the linearized problem \eqref{RadSymLinearized1}--\eqref{ur_e_dlya_m} up to the constant eigenvector $U_1$. Here constant $c$ is the linearized total myosin mass. Indeed, if $m_0+\ve m_\ve$ is a perturbation of the steady state myosin density, then the total myosin mass expands as 
	\begin{eqnarray}
	&&\int_{\Omega_\ve}(m_0+ \ve m_\ve(x,y,0))dxdy\nonumber\\
	&&\hspace{40pt}=\int_{B_R}m_0 dxdy +\ve \left(\int_{B_R} m_\ve dxdy +\frac{1}{\ve}\left(\int_{\Omega_\ve\setminus B_R}-\int_{B_R\setminus \Omega_\ve}\right)m_0 dxdy  \right)+ O(\ve^2)\nonumber\\
	&&\hspace{40pt}=\int_{B_R}m_0 dxdy + \ve \left(\mathop{\underbrace{\int_{B_R} m_\ve dxdy +m_0R_0\int_{-\pi}^{\pi}\rho(\tilde\varphi,0)d\tilde\varphi}}_{\mathrm{linearized}\,\mathrm{total}\,\mathrm{myosin}\,\mathrm{mass}} \right)+ O(\ve^2). \nonumber
	\end{eqnarray}
	Constant $\Pi$ is chosen such that if one substitutes $U(t)\equiv U_1$ into \eqref{NashaTsel'}, then \eqref{NashaTsel'} becomes a trivial equality $U_1=cU_1/\Pi$ with $c=c(U_1)=\Pi$. 
	Constants $c$ and $\Pi$ can also be written as a projection in terms of dot-products:
 	\begin{equation}
	c=(U(0)\cdot\left[\begin{array}{c}1\\m_0\end{array}\right])_{L^2(B_R\times \partial B_R)}\quad\text{ and }\quad
	\Pi=(U_1\cdot\left[\begin{array}{c}1\\m_0\end{array}\right])_{L^2(B_R\times \partial B_R)}. 
	\end{equation}
	Representation \eqref{NashaTsel'} combined with the estimate \eqref{exp_decay} show that  time-dependent part $\tilde{U}(t)$ of the solution $U(t)$ is exponentially decaying in time, that is \eqref{exp_decay} establishes contraction property of the corresponding semi-group for sufficiently large time.

	 Statement (ii) establishes stability and regularity in stronger norms provided that initial conditions are sufficiently smooth. To explain the powers in (ii), note that $m$ belongs at least in $H^2(B_R)$ (so that LHS of \eqref{ur_e_dlya_m} in $L^2(B_R)$). Then from \eqref{ur_e_dlyacrtin} it follows that $\nabla \phi \in H^{5/2}(\partial B_R)$. Next, if one differentiates \eqref{krizna_linearized} in $\varphi$, then it follows that $\rho\in H^{3+5/2=11/2}(\partial B_R)$.

	  Statement (iii) is about the linearized problem if inhomogeneity ${F}(t)$ is added. This result is needed to extend stability of linearized problem to the nonlinear one by representing original problem $U_t=\mathcal{L}(U)$ as $U_t=\mathcal{A}U  +F(t)$ with nonlinearity $F(t)=\mathcal{L}(U)-\mathcal{A}U$.   
	\end{rem}

\begin{proof}
We employ Fourier analysis, representing $U=e^{\mathcal{A}t} U(0)$ as
\begin{equation}
\label{Fourier_ryd1}
U=\sum_{n=0}^\infty (\hat m_n(r,t), \hat \rho_n(t))\cos n\varphi,
\end{equation} 
then each pair $(\hat m_n(r,t), \hat \rho_n(t))\cos n\varphi$ satisfies system \eqref{RadSymLinearized1}--\eqref{ur_e_dlya_m} 
with $\phi=\hat\phi_n(r,t) \cos n\varphi$ solving for $n\geq 1$ the equation $\Delta (\hat\phi_n\cos n\varphi)+\hat m_n\cos n\varphi=
\zeta\hat\phi_n\cos n\varphi$ with the boundary condition $\hat \phi_n=\frac{\gamma}{R^2\zeta}(1-n^2)\hat\rho_n(t) \cos n\varphi$ on  $\partial B_R$. In the case $n=0$ it is convenient to seek $\phi$ in the form   $\phi =\tilde\phi_0(r,t)-2\pi {R_0}\hat \rho_0(t) p_{\rm eff}^\prime(\pi {R_0}^2)/\zeta$ then $\Delta \tilde\phi_0 +\hat m_0=\zeta \tilde\phi_0$ in $B_R$ and 
\begin{equation}
\label{vspomog_rvo1}
\tilde\phi_0(R_0,t)=
\Bigl[\frac{\gamma}{R^2\zeta}+2\pi {R_0}\hat  p_{\rm eff}^\prime(\pi {R_0}^2)/\zeta\Bigr]\hat\rho_0(t)
\end{equation}

Let us prove first the exponential stabilization of the zero mode. To this end integrate the equations $\Delta \tilde\phi_0 +\hat m_0=\zeta \tilde\phi_0$ and $\partial_t \hat m_0=\Delta \hat m_0-m_0\Delta \tilde\phi_0$ over $B_{R_0}$ to obtain
\begin{equation}
\label{perv_ssootn_rad_sym}
\frac{2}{R_0}\frac{d}{d t} \hat \rho_0+\langle\hat  m_0 \rangle=\zeta \langle \tilde \phi_0 \rangle,\quad 
\frac{d}{dt} \langle \hat m_0\rangle+\frac{2 m_0}{R_0}\frac{d}{dt}\hat\rho_0=0.
\end{equation}
The second equation (linearized myosin mass preservation) implies that $\langle\hat  m_0\rangle+\frac{2 m_0}{R_0}\hat \rho_0=M_0$ 
is conserved in time, therefore  the first equation in \eqref{perv_ssootn_rad_sym} rewrites  with the help of \eqref{vspomog_rvo1} as
\begin{equation}
\label{vtor_ssootn_rad_sym}
\frac{2}{R_0}\frac{d}{dt} \hat\rho_0= \left( \frac{2 m_0}{R_0}+ \frac{\gamma}{{R_0}^2}+2\pi {R_0} p_{\rm eff}^\prime(\pi {R_0}^2)\right) \hat\rho_0+ \zeta (\langle \tilde \phi_0 \rangle-\tilde \phi(R_0,t)) -M_0.
\end{equation}
Subtracting $cU_1/\Pi$ from the solution $U$ we reduce the study to the case $M_0=0$, see Remark \ref{remarkadlinnaya}, hence
\begin{equation}
\label{esche_formulka}
\frac{2}{R_0}\frac{d}{dt} \hat\rho_0=-\theta_2\hat\rho_0+ \zeta \langle \tilde \phi_0 \rangle,
\end{equation}
where $-\theta_2:=\frac{2 m_0}{R_0}+ \frac{\gamma}{{R_0}^2}+2\pi {R_0} p_{\rm eff}^\prime(\pi {R_0}^2)<0$.

Next multiply the equation
$\partial_t \hat m_0=\Delta \hat m_0-m_0\Delta \tilde\phi_0$   
 by $\tilde m=\hat m_0-\langle 
\hat m_0\rangle$ and integrate over $B_{R_0}$:
\begin{equation}
\label{tret_ssootn_rad_sym}
\frac{d}{2dt}\int_{B_{R_0}}\tilde m^2 dxdy= -\int_{B_{R_0}}|\nabla \tilde m|^2 dxdy+m_0 \int_{B_{R_0}}\tilde m^2 dxdy-\zeta m_0 \int_{B_{R_0}}\tilde \phi_0\tilde m  dxdy.
\end{equation}
Then we multiply the equation $\Delta \tilde \phi_0 +\hat m_0=\zeta\tilde \phi_0$ by $\tilde \phi_0 -\langle \tilde \phi_0\rangle$ and integrate over 
$B_{R_0}$:
\begin{equation*}
\int_{B_{R_0}}\tilde \phi_0\tilde m  dxdy=\zeta \int_{B_{R_0}} (\tilde \phi_0 -\langle \tilde \phi_0\rangle)^2 dxdy
+\int_{B_{R_0}}|\nabla (\tilde \phi_0 -\langle \tilde \phi_0  \rangle)|^2 dxdy
-2\pi R_0 \frac{d \hat \rho_0}{dt}(\tilde \phi_0(R_0,t)-\langle \tilde \phi_0 \rangle).
\end{equation*}
We use this equality in \eqref{tret_ssootn_rad_sym} to get 
\begin{equation}
\label{chetv_ssootn_rad_sym}
\begin{aligned}
\frac{d}{2dt}\int_{B_{R_0}}\tilde m^2 dxdy=& -\int_{B_{R_0}}|\nabla \tilde m|^2 dxdy+m_0 \int_{B_{R_0}}\tilde m^2 dxdy
-\zeta m_0 \int_{B_{R_0}}|\nabla (\tilde \phi_0 -\langle \tilde \phi_0\rangle)|^2 dxdy
\\&-\zeta^2 m_0 \int_{B_{R_0}} (\tilde \phi_0 -\langle \tilde \phi_0\rangle)^2 dxdy+
2\pi \zeta m_0 R_0 \frac{d \hat\rho_0}{dt}(\tilde \phi_0(R_0,t)-\langle \tilde \phi_0 \rangle). 
\end{aligned}
\end{equation}
Then  using 
\eqref{vtor_ssootn_rad_sym}  and the inequality $\int_{B_R}|\nabla (\tilde \phi_0 -\langle \tilde \phi_0\rangle)|^2dxdy+\zeta
\int_{B_R}(\tilde \phi_0 -\langle \tilde \phi_0\rangle)^2 dxdy\geq 
Q(\tilde \phi_0(R,t) -\langle \tilde \phi_0\rangle)^2$ (see 
\eqref{miniminiminimizat})
we get 
\begin{equation}
\label{pyat_ssootn_rad_sym}
\begin{aligned}
\frac{d}{2dt}\int_{B_{R_0}}\tilde m^2 dxdy\leq& -\int_{B_R}|\nabla \tilde m|^2 dxdy+m_0 \int_{B_{R_0}}\tilde m^2 dxdy-
\zeta m_0 Q(\tilde \phi({R_0}) -\langle \tilde \phi\rangle)^2
\\&
-4\pi m_0\left(\frac{d \hat \rho_0}{dt}\right)^2 +
\pi   m_0 R_0 \left( \frac{2 m_0}{R_0}+ \frac{\gamma}{R^2}+2\pi R p_{\rm eff}^\prime(\pi R^2_0)\right)\frac{d }{dt}\hat\rho_0^2 . 
\end{aligned}
\end{equation}
Since $m_0$ is less than the third eigenvalue of the Neumann Laplacian, by Proposition \ref{lem:ineq_for_m}  we have 
$\int_{B_R}|\nabla \tilde m|^2 dxdy-m_0 \int_{B_R}\tilde m^2 dxdy\geq \theta_1  \int_{B_R}\tilde m^2 dxdy$ for some 
$\theta_1>0$.
Then using 
\eqref{vtor_ssootn_rad_sym} once more we get
\begin{equation}
\label{shest_ssootn_rad_sym}
\begin{aligned}
\frac{d}{2dt}\int_{B_{R_0}}\tilde m^2 dxdy+(\pi m_0 R_0+2Q m_0/(\zeta R_0))\theta_2 \frac{d }{dt}\hat\rho_0^2 \leq-\theta_1\int_{B_R}|\tilde m|^2 dxdy-\theta_3\hat\rho_0^2,
\end{aligned}
\end{equation}
$\theta_3=Q\theta_2^2/\zeta^2>0$, this yields exponential decay of $\|\hat m_0\|_{L^2}$ and $|\hat\rho_0|$ as $t\to+\infty$.


Exponential decay of other modes is more simple to show (as in Theorem \ref{thm_linearizeddisk}).
For the component $n=1$
we have 
 $\hat \rho_1=0$ for all $t\geq 0$, then using positive definiteness of the form 
\eqref{formaquadratic} we get $\|\hat m_1 \|_{L^2}^2\leq e^{-\theta_4 t}\|\hat m_1\|^2|_{t=0}$, $\theta_4>0$. For higher harmonics, $n\geq 2$, we write $\partial_t \hat m_n\cos n\varphi=\Delta (\hat m_n\cos n\varphi)-m_0\Delta (\hat\phi_n\cos n\varphi)=
\Delta  (\hat m_n\cos n\varphi)+m_0 \hat m_n\cos n\varphi-\zeta m_0\hat\phi_n\cos n\varphi$ multiply by $\hat m_n\cos n\varphi$ 
and  integrate over $B_{R_0}$ to obtain, using the equality
$\hat m_n\cos n\varphi=\zeta \hat\phi_n\cos n\varphi-\Delta  (\hat\phi_n\cos n\varphi)$  and boundary conditions 
$\partial_r\hat\phi_n(R_0)=\frac{d\hat\rho_n}{dt}$, $\hat\phi_n(R_0)=-\frac{\gamma(n^2-1)}{R^2\zeta}\hat\rho_n$ ,
\begin{equation}
\label{higher _harm_ssootn_rad_sym}
\begin{aligned}
\frac{d}{4dt}\int_{B_{R_0}} \hat m^2_ndxdy=& -\int_{B_{R_0}}|\nabla (\hat  m_n\cos n\varphi)|^2 dxdy+\frac{m_0}{2} 
\int_{B_{R_0}} \hat m^2_n dxdy \\
&-
\zeta m_0 \int_{B_{R_0}}|\nabla (\hat \phi_n\cos n\varphi)|^2 dxdy
-\frac{\zeta^2 m_0}{2}  \int_{B_{R_0}} \hat \phi^2_n dxdy+\pi  \frac{m_0\gamma (1-n^2)}{2R_0}\frac{d}{dt}\hat\rho^2_n(t),
\end{aligned}
\end{equation}
where $\rho(\varphi,t)=\hat\rho(t) \cos n\varphi$.  Observe that for every function $\phi(r)$ one has
\begin{equation}
\label{lower_bound_harm}
\int_{B_{R_0}}|\nabla (\phi (r)\cos n\varphi)|^2 dxdy\geq |\phi(R_0)|^2\int_{B_{R_0}}|\nabla \left((r/R_0)^n\cos n\varphi\right)|^2 dxdy= \pi n|\phi(R_0)|^2.
\end{equation}
Plugging this bound into \eqref{higher _harm_ssootn_rad_sym} and applying Gronwall's  lemma we obtain 
\begin{align}
\label{gronwall1}
(\|\hat m_n(r,t)\|_{L^2(B_{R_0})}^2+n^2 |\hat\rho_n(t)|^2)\leq e^{-\overline\theta_1(n) t}(\|\hat m_n(r,0)\|_{L^2(B_{R_0})}^2
+n^2|\hat \rho_n(0)|^2)\quad \text{with}\   \overline\theta_1(n)\geq c n^2,
\end{align}
where $c>0$. This proves  \eqref{exp_decay}. Also estimates \eqref{gronwall1} yield  \eqref{exp_decay_bis} via a bootstrap procedure described in the proof of (iii).

Now we proceed with item (iii). Represent $F(t)=(f(r,\varphi,t), g(\varphi,t))$ as $F(t)=s(t)U_1/\Pi+\tilde F$,
 where $s(t)=\int_{B_{R_0}}fdxdy+m_0R_0 \int_{-\pi}^{\pi} g d\varphi$ and $\tilde F=(\tilde f,\tilde g )$ satisfies 
$\int_{B_{R_0}}\tilde f dxdy+m_0R_0 \int_{-\pi}^{\pi} \tilde g d\varphi=0$ for all $t$.  Then 
$$
U(t)=U_1\int_0^t s(\tau)d\tau/\Pi+\tilde U, \quad \tilde U=(\tilde m(r,\varphi,t),\tilde \rho(\varphi,t))
$$ 
and by 
item (i) we have  
$$
\|\tilde U(t)\|_{L^2(B_{R_0})\times H^1(-\pi,\pi)}\leq u(t):=
C\int_0^te^{-\theta (t-\tau)}\|F(\tau)\|_{L^2(B_{R_0})\times H^1(-\pi,\pi)}d\tau. 
$$
Since $$\frac{d u}{dt}u +\theta u^2=C u(t) \|F(t)\|_{L^2(B_{R_0})\times H^1(-\pi,\pi)} \leq \frac{C^2}{2\theta}\|F(t)\|^2_{L^2(B_{R_0})\times H^1(-\pi,\pi)}+\frac{\theta}{2} u^2, $$
 it holds that
 $$
\int_0^T u^2dt \leq \frac{C^2}{\theta^2}\int_0^T\|F(t)\|^2_{L^2(B_{R_0})\times H^1(-\pi,\pi)}dt.
$$
In particular, for every Fourier component $\tilde\rho_n(t)$ of $\tilde\rho=\sum \tilde \rho_n(t) \cos n \varphi$ we have 
\begin{equation}
\label{Fourier_comp_tilde_rho}
 \int_0^T \tilde\rho_n^2(t) dt \leq C\int_0^T\|F(t)\|^2_{L^2(B_{R_0})\times H^1(-\pi,\pi)}dt.
\end{equation}
To improve \eqref{Fourier_comp_tilde_rho} for $n>2$ expand $f$, $g$, $\tilde m$ and $\tilde\phi$ into Fourier series
$f=\sum f_n(r,t)\cos n\varphi$, $g=\sum g_n(t)\cos n\varphi$, $\tilde m=\sum \tilde m_n(r,t)\cos n\varphi$, $\tilde \phi=\sum \tilde\phi_n(r,t)\cos n\varphi$., where $\tilde\phi$ is the solution of problem \eqref{tilde_phi_eqn}.   Then, arguing as in the derivation of \eqref{higher _harm_ssootn_rad_sym} we get for $n>2$
\begin{equation}
\label{higher _harm_ssootn_rad_sym_bis}
\begin{aligned}
\frac{d}{4dt}\int_{B_R} \tilde m^2_ndxdy=& -\int_{B_R}|\nabla (\tilde m_n\cos n\varphi)|^2 dxdy+\frac{m_0}{2} \int_{B_R} \tilde m^2_n dxdy+\frac{1}{2} \int_{B_R} f_n \tilde m_n dxdy \\
&-
\zeta m_0 \int_{B_R}|\nabla (\tilde \phi_n\cos n\varphi)|^2 dxdy
-\frac{\zeta^2 m_0}{2}  \int_{B_R} \tilde \phi^2_n dxdy\\
&+\pi  \frac{m_0\gamma (1-n^2)}{2R_0}\frac{d}{dt}\tilde\rho^2_n(t)-\pi  \frac{m_0\gamma (1-n^2)}{R_0}g_n(t)\tilde\rho_n(t),
\end{aligned}
\end{equation}
Now we use here the bound \eqref{lower_bound_harm}, integrate the result  from $0$ to $T$ in time to obtain that
\begin{equation}
\label{Fourier_comp_tilde_rho_bis}
cn^5 \int_0^T \tilde \rho_n^2(t)dt \leq C \int_0^T\Bigl(\int_{B_{R_0}} f_n^2dxdy+n^2g_n^2(t)\Bigr)dt,\quad n>2,
\end{equation}
where $c>0$ and $C$ are independent of $n$, $t$ and $T$. Thus \eqref{Fourier_comp_tilde_rho} and \eqref{Fourier_comp_tilde_rho_bis}
imply that $\|\rho\|_{L^2(0,T;H^{5/2}(-\pi,\pi))}\leq C\|F\|_{L^2(0,T;B_{R_0})\times H^1(-\pi,\pi))}$. Then, by elliptic estimates applied to
$-\Delta\tilde\phi+\zeta\tilde\phi=\tilde m$ in $B_{R_0}$ with the boundary 
condition $\tilde\phi=\frac{\gamma}{R^2\zeta}(\tilde\rho^{\prime\prime}+\tilde\rho)$ on $\partial B_R$ we have 
$\int_0^T\|\tilde\phi\|^2_{H^2(B_{R_0})}dt \leq C\int_0^T\|F(t)\|^2_{L^2(B_{R_0})\times H^1(-\pi,\pi)}dt$.
This allows us to improve bound for $\tilde m$, applying parabolic estimates to the equation $\partial_t \tilde m-\Delta \tilde m+\tilde m=(m_0+1) \tilde m -\zeta m_0 \tilde \phi$
(where we consider the right hand side as known) with the boundary condition $\partial_r \tilde m=0$ on $\partial B_{R_0}$. We find that 
\begin{equation}
\label{m_my_otsenili}
\begin{aligned}
\int_0^T (\|\tilde m\|^2_{H^{2}(B_{R_0})}+\|\partial_t \tilde m\|^2_{L^2(B_{R_0})}) dt &\leq 
C\int_0^T\|(m_0+1)\tilde m -\zeta m_0\tilde \phi\|^2_{L^2(B_{R_0})}dt\\
&\leq  C\int_0^T\|F(t)\|^2_{L^2(B_{R_0})\times H^1(-\pi,\pi)}dt.
\end{aligned}
\end{equation}
%

We also improve  bounds \eqref{Fourier_comp_tilde_rho_bis} for $n>2$. To this end represent the solution $\tilde\phi$ of
\begin{equation}
\label{tilde_phi_eqn}
\Delta \tilde\phi+\tilde m=\zeta\tilde \phi\quad\text{in}\ B_{R_0},
\quad \tilde \phi=\frac{\gamma}{R^2_0\zeta}(\tilde\rho^{\prime\prime}+\tilde \rho) \quad \text{on}\ \partial B_{R_0}
\end{equation}
as $\tilde\phi=\tilde\phi^{(1)}+\tilde\phi^{(2)}$, where
\begin{equation*}
\begin{aligned}
\Delta \tilde\phi^{(1)}=\zeta\tilde \phi^{(1)}\quad\text{in}\ B_{R_0},
\quad\tilde \phi^{(1)}=\frac{\gamma}{R^2\zeta}(\tilde\rho^{\prime\prime}+\tilde \rho) \quad \text{on}\ \partial B_{R_0},\\
\Delta \tilde\phi^{(2)}+\tilde m=\zeta\tilde \phi^{(2)}\quad\text{in}\ B_{R_0},
 \quad\tilde\phi^{(2)}=0 \quad \text{on}\ \partial B_{R_0}.
\end{aligned}
\end{equation*}
Next expand $\tilde \phi^{(1)}$ and $\tilde\phi^{(2)}$ into the Fourier series 
$\tilde\phi^{(1)} =\sum \tilde\phi_n^{(1)}(r,t)\cos n\varphi$, 
$\tilde\phi^{(2)}=\sum \tilde\phi_n^{(2)}(r,t)\cos n\varphi$ and  multiply \eqref{tilde_phi_eqn} 
by $\tilde\phi_n(r,t)\cos n\varphi$, $n\geq 2$, to find, integrating over $B_{R_0}$
\begin{equation}
\label{eqn83}
\begin{aligned}
\int_{B_{R_0}}(|\nabla (\tilde\phi_n\cos n\varphi)|^2+\frac{1}{2}(\zeta\tilde\phi_n^2-m\tilde\phi_n))dxdy&=\int_{-\pi}^{\pi}\tilde \phi_n \partial_r\tilde \phi_n R_0\cos^2n\varphi
d\varphi\\
&=\frac{\pi \gamma}{R_0\zeta}(1-n^2)\tilde\rho_n(t)\left(\frac{d\tilde\rho_n(t)}{dt}-g_n(t)\right).
\end{aligned}
\end{equation}
On the other hand, the left hand side of \eqref{eqn83} rewrites as 
$$
\int_{B_{R_0}}\left(|\nabla (\tilde\phi_n^{(1)}\cos n\varphi)|^2+\frac{\zeta}{2}(\tilde\phi_n^{(1)})^2\right)dxdy+
\frac{\gamma\pi}{{R_0}\zeta}(1-n^2)\tilde\rho_n(t)\partial_r\tilde\phi_n^{(2)}(R_0,t),
$$
and using \eqref{lower_bound_harm} we obtain
\begin{equation}
\label{Nudostalouzhe}
c n^5\tilde\rho^2_n(t)+\frac{\pi \gamma}{2R_0\zeta}(n^2-1)\frac{d\tilde\rho_n^2(t)}{dt}\leq 
\frac{C}{n}\left(\bigl(\partial_r\tilde\phi_n^{(2)}(R_0,t)\bigr)^2+g_n^2\right),
\end{equation}
with $c>0$ and $C$ independent of $n$. Now multiply \eqref{Nudostalouzhe} by $n^6$ integrate in $t$ from $0$ to $T$ and add up 
the inequalities obtained to find that
\begin{equation}
\label{Nusovsemnadoelo}
\int_0^T\|\tilde\rho\|_{H^{11/2}(-\pi,\pi)}^2dt \leq C_1  \int_0^T\|\partial_r\tilde\phi^{(2)}\|_{H^{5/2}(-\pi,\pi)}^2dt+C_2 \int_0^T\|F(t)\|^2_{L^2(B_{R_0})\times H^{5/2}(-\pi,\pi)}dt. 
\end{equation}
It remains to note that by elliptic estimates that $\|\tilde\phi^{(2)}\|_{H^4(B_{R_0})}\leq C \|\tilde m\|_{H^2(B_{R_0})}$, which yields
$\|\partial_r\tilde\phi^{(2)}\|_{H^{5/2}(-\pi,\pi)}\leq C \|\tilde m\|_{H^2(B_{R_0})}$, and exploit \eqref{m_my_otsenili} to obtain the required bound for $\|\tilde\rho\|_{H^{11/2}(-\pi,\pi)}$ in $L^2(0,T)$. Also, since $\partial_t \tilde\rho =\partial_r \tilde \phi^{(1)}+\partial_r \tilde\phi_2 $ and
$\|\tilde \phi^{(1)}\|_{H^4(B_{R_0})}\leq C \|\tilde\rho\|_{H^{11/2}(-\pi,\pi)}$ we have $\int_0^T\|\partial_t \tilde\rho\|^2_{H^{5/2}(-\pi,\pi)}dt\leq C_2 \int_0^T\|F(t)\|^2_{L^2(B_{R_0})\times H^{5/2}(-\pi,\pi)}dt$.   

To prove (ii) we first obtain from \eqref{gronwall1} the following bound for the $\rho$-component $\tilde \rho$ of $\tilde U$, 
\begin{equation}
\int_0^\infty \|\tilde \rho\|^2_{H^{3}(-\pi,\pi)}dt \leq C  \|\tilde U(0)\|^2_{ H^1(B_{R_0})\times  H^{2}(-\pi,\pi)}.
\end{equation}
By (i) we also know that $\|\tilde m\|_{L^2(B_{R_0})}\leq C e^{-\theta t}  \| U(0)\|_{ L^2(B_{R_0})\times (H^{1}(-\pi,\pi))}$, 
therefore, arguing as in item (iii) one can show that  
\begin{equation}
\label{esche_boot_strap}
\int_0^\infty (\|\tilde m\|^2_{H^{2}(B_{R_0})}+\|\partial_t \tilde m\|^2_{L^2(B_{R_0})}) dt \leq 
C\int_0^\infty \|(m_0+1)\tilde m -\zeta m_0\tilde \phi\|^2_{L^2(B_{R_0})}
\leq C  \|\tilde U(0)\|^2_{ H^1(B_{R_0})\times  H^{2}(-\pi,\pi)}.
\end{equation}
Following further the lines of the  proof of item (iii) we eventually get  
\begin{equation}
\label{Nusovsemnadoelo_konets}
\int_0^\infty\|\tilde\rho\|_{H^{11/2}(-\pi,\pi)}^2dt \leq C_1  \int_0^\infty\|\partial_r\tilde\phi^{(2)}\|_{H^{5/2}(-\pi,\pi)}^2dt+
C_2 \|\rho\|^2_{H^{4}(-\pi,\pi)}\bigl|_{t=0}. 
\end{equation}
Then again arguing as in item (iii) we complete the proof of the Lemma.
\end{proof}

\begin{cor}
\label{cor_pro_l_infty_bounds} Under conditions of Theorem \ref{nonlinear_stability} the following uniform in $t\in[0,T]$ bounds hold
$$
\|e^{\mathcal{A}t}U_0\|_{H^1(B_{R_0})\times H^4(-\pi,\pi)}\leq C\|U_0\|_{H^2(B_{R_0})\times H^4(-\pi,\pi)}\quad \forall U_0\in
H^2_N (B_{R_0})\times H^4(-\pi,\pi),
$$
$$
\|U(t)\|_{H^1(B_{R_0})\times H^4(-\pi,\pi)}\leq  C T \int_0^T \| F(t)\|^2_{ L^2(B_{R_0})\times 
H^{5/2}(-\pi,\pi)}dt,\quad \forall F\in L^2(0,T;  L^2(B_{R_0})\times 
H^{5/2}(-\pi,\pi)), 
$$
where $U(t)=\int_0^te^{A(t-\tau)}F(\tau)d\tau$, $C$ is independent of $T$.
\end{cor}
\begin{proof} To get the sought bound for the  $\rho$-component, we write
$$
\begin{aligned}
\|\rho(t)\|^2_{H^4(-\pi,\pi)}=&C\|\rho(0)\|^2_{H^4(-\pi,\pi)}+
C\int_0^t \int_{-\pi}^{\pi}(\partial^4_{\varphi^4}\rho \partial_t \partial^4_{\varphi^4}\rho + \rho \partial_t \rho)d\varphi dt\\
& \leq 
C\|\rho(0)\|^2_{H^4(-\pi,\pi)}+C\int_0^t \|\partial_t \rho\|_{H^{5/2}(-\pi,\pi)} 
\|\rho\|_{H^{11/2}(-\pi,\pi)}dt
\end{aligned}
$$
and then use bounds from Lemma \ref{Dlinnaya_i_pechalnaya_lemma}. The $m$-component is treated similarly.
\end{proof}

Although  the function $\phi$ appearing in the linearized problem  \eqref{RadSymLinearized1}--\eqref{ur_e_dlya_m} does not belong to the 
phase space, it is convenient to introduce the operator $S_\phi(m,\rho)$ which 
assigns to the given $m$ and $\rho$ the unique solution $S_\phi(m,\rho)$of the problem 
\begin{equation}
\label{funktsiya_phi_iz_problemy} 
\Delta S_\phi + m=\zeta S_\phi+p_{\rm eff}^\prime(\pi R^2)R\int_{-\pi}^{\pi}\rho(\varphi)d\varphi \quad\text{in}\ B_R,\quad 
S_\phi=\frac{\gamma}{R^2\zeta}(\rho^{\prime\prime}+\rho) \quad \text{on}\ \partial B_R.
\end{equation}

To deal with shift invariance we rewrite problem  \eqref{actflow_in_terms_of_phi}--\eqref{myosin_bc_I} in moving frame 
with center at $\ve(X_{c,\ve}(t), 0)$, then $\Omega_\ve(t)=\tilde\Omega_\ve(t)+\ve(X_{c,\ve}(t), 0)$ and \eqref{actin_bc_normal}  after introducing the polar coordinates $(\tilde r,\tilde \varphi)$ to parameterize $\tilde\Omega_\ve(t)$, 
 $\tilde\Omega_\ve(t)=\{0\leq \tilde r<R_0+\ve \rho_\ve(\tilde\varphi,t)\}$, reads  
\begin{equation}\label{actin_bc_normal_new}
\partial_t \rho_{\ve}+\dot X_{c,\ve}\left(\cos\varphi +\frac{\ve \rho^\prime_{\ve}\sin \varphi}{R_0+\ve\rho_{\ve}}\right)=
\frac{\sqrt{\ve^2(\rho^\prime_{\ve})^2+(R_0+\ve\rho_{\ve})^2}}{\ve(R_0+\ve\rho_{\ve})}\partial_\nu \phi  \quad\text{on}\ \partial \tilde\Omega_\ve(t),
\end{equation}
while \eqref{myosin_equations_I} becomes
\begin{equation}
\label{meqOdIn}
\partial_t m= \Delta m +\ve\dot X_{c,\ve}\partial_x m-\div (m \nabla \phi), \quad \text{in} \ \tilde\Omega_\ve(t).
\end{equation}
We impose the orthogonality condition $\int_{-\pi}^{\pi}\partial_t \rho_\ve \cos\varphi d\varphi=0$ which yields the following equation governing the evolution of $X_{c,\ve}$
\begin{equation}
\label{evolution_of_center}
\dot X_{c,\ve}\left(1 +\frac{\ve}{2\pi}\int_{-\pi}^{\pi}\frac{\rho^\prime_{\ve}\sin 2\varphi}{R_0+\ve\rho_{\ve}}d\varphi\right)=
\frac{1}{\pi}\int_{-\pi}^{\pi}\frac{\sqrt{\ve^2(\rho^\prime_{\ve})^2+(R_0+\ve\rho_{\ve})^2}}{\ve(R_0+\ve\rho_{\ve})}\partial_\nu \phi \cos\varphi d\varphi.
\end{equation}

Next we introduce a transformation to reduce the study of the free boundary problem  to a problem in the fixed disk.  
We introduce local coordinates in an inner neighborhood of   $\partial \tilde\Omega_\ve$ by setting 
$
\tilde\Omega_\ve\ni (\tilde x,\tilde y)\mapsto (r,\varphi)\in (2R_0/3,R_0)\times (-\pi,\pi)$,
\begin{equation}
\label{change_in_neigh}
\begin{aligned}
\tilde x
=(R_0+\ve\rho_\ve(\varphi,t))\cos\varphi +(r-R_0)\frac{\ve \rho^\prime \sin\varphi+(R_0+\ve\rho_\ve)\cos\varphi}
{\sqrt{\ve^2(\rho^\prime_\ve)^2+(R_0+\ve\rho_\ve)^2}}, \\
\tilde y =(R_0+\ve\rho_\ve(\varphi,t))\sin\varphi +(r-R_0)\frac{-\ve\rho^\prime_\ve \cos\varphi+(R_0+\ve\rho_\ve)\sin\varphi}
{\sqrt{\ve^2(\rho^\prime_\ve)^2+(R_0+\ve\rho_\ve)^2}}      
\end{aligned}
\end{equation}
Note that the  normal vector on the boundary is given by 
$$\nu_x=\frac{\ve\rho^\prime_\ve \sin\varphi+(R_0+\ve\rho_\ve)\cos\varphi}{\sqrt{\ve^2(\rho^\prime_\ve)^2+(R_0+\ve\rho_\ve)^2}},
\quad
\nu_y=\frac{-\ve\rho^\prime_\ve \cos\varphi+(R_)+\ve\rho_\ve)\sin\varphi}{\sqrt{\ve^2(\rho^\prime_\ve)^2+(R_)+\ve\rho_\ve)^2}}.
$$
Also observe that  $R_0-r$ in \eqref{change_in_neigh}  
represents the distance from the boundary $\partial\tilde\Omega_\ve$ to $(\tilde x,\tilde y)$  and therefore the normal derivative on the boundary $\partial\tilde{\Omega}_\ve(t)$   becomes  the derivative  in $r$  on $\partial B_{R_0}$ even though the domain  $\tilde\Omega_\ve$  is obtained by non-radial perturbations  of the disk $B_R$:
$$
\partial_\nu m(\tilde x(R_0,\varphi),\tilde y(R_0,\varphi))=
\partial_r m(\tilde x(r,\varphi), \tilde y(r,\varphi))\Bigl|_{r=R_0}\Bigr.
$$
 In order to avoid  singular behavior at the origin, the coordinate transformation  \eqref{change_in_neigh}  is defined in a neighborhood of the boundary  $\partial \tilde\Omega_\ve$.  Then  the extension  from  this neighborhood to the entire domain   $\tilde \Omega_{\ve}$ by $(r,\varphi)\in [0,R_0)\times [-\pi,\pi)$ is done  by  employing  a  cutoff function  
$\chi\in C^\infty$,
$\chi(r)=1$ for $r>2R_0/3$ and $\chi(r)=0$ for $r<R_0/2$: 
\begin{equation*}
\begin{aligned}
\tilde x 
=\Bigl[(R_0+\ve\rho_\ve(\varphi,t))\cos\varphi +(r-R_0)\frac{\ve \rho^\prime_\ve \sin\varphi+(R_0+\ve\rho_\ve)\cos\varphi}
{\sqrt{\ve^2(\rho^\prime_\ve)^2+(R_0+\ve\rho_\ve)^2}}\Bigr]\chi(r)+(1-\chi(r))r\cos\varphi, \\
\tilde y =
\Bigl[(R_0+\ve\rho_\ve(\varphi,t))\sin\varphi +(r-R_0)\frac{-\ve\rho^\prime_\ve \cos\varphi+(R_0+\ve\rho_\ve)\sin\varphi}
{\sqrt{\ve^2(\rho^\prime_\ve)^2+(R_0+\ve\rho_\ve)^2}}
\Bigr]\chi(r)+
(1-\chi(r))r\sin\varphi,     
\end{aligned}
\end{equation*}
or
\begin{equation}
\label{ne_hanzawa}
\tilde x
=(r+\ve\eta)\cos\varphi-\ve \sigma \sin\varphi,\quad 
\tilde y =(r+\ve\eta)\sin\varphi +\ve\sigma\cos\varphi,
\end{equation}
where 
$$
\eta(r,\rho_\ve,\rho^\prime_\ve)=\frac{\ve(\rho_\ve^\prime)^2(R_0-r)}
{\Bigl(R_0+\ve\rho_\ve+{\sqrt{\ve^2(\rho^\prime_\ve)^2+(R_0+\ve\rho_\ve)^2}}\Bigr){\sqrt{\ve^2(\rho^\prime_\ve)^2+(R_0+\ve\rho_\ve)^2}}}\chi(r)
+\rho_\ve\chi(r),
$$
$$
\sigma(r,\rho_\ve,\rho^\prime_\ve)=(R_0-r)\frac{\rho^\prime_\ve}{\sqrt{\ve^2(\rho^\prime_\ve)^2+(R_0+\ve\rho_\ve)^2}}\chi(r).
$$


 Represent $m$ and $\phi$ in the  form 
$$
\begin{aligned}
m(\tilde x(r,\varphi,t)+\ve X_{c,\ve}(t),\tilde y(r,\varphi,t),  t)&=m_0+\ve m_\ve(r,\varphi,t), \\
\quad \phi(\tilde x(r,\varphi,t)+\ve X_{c,\ve}(t),\tilde y(r,\varphi,t),t)&=\phi_0+\ve \phi_\ve (r,\varphi,t)
+\ve p_{\rm eff}^\prime(\pi R^2_0)\frac{R_0}{\zeta}\int_{-\pi}^{\pi}\rho_\ve(\tilde\varphi,t)d\tilde\varphi,
\end{aligned}
$$
then \eqref{actflow_in_terms_of_phi}--\eqref{myosin_bc_I} rewrites as a problem whose  linear part is the same as in \eqref{RadSymLinearized1}--\eqref{ur_e_dlya_m}, but with additional nonlinear terms $f_1$, $f_2$, $g_1$, and $g_2$:   
\begin{equation}
\label{NEWactflow_in_terms_of_phi}
\Delta\phi_\ve +m_\ve=\zeta  \phi_\ve +p_{\rm eff}^\prime(\pi R^2_0)R_0\int_{-\pi}^{\pi}\rho_\ve(\tilde\varphi)d\tilde\varphi
 -\ve f_1 \quad\text{in}\  B_R,
\end{equation}
\begin{equation}\label{NEWactin_bc_potential}
\phi_\ve=\frac{\gamma}{R^2\zeta}(\rho^{\prime\prime}+\rho) 
+\ve g_1
\quad\text{on}\ \partial B_R,
\end{equation}
\begin{equation}\label{NEWactin_bc_normal}
\partial_t\rho_\ve =\partial_r  \phi_\ve-\frac{\cos\varphi}{\pi} \int_{-\pi}^\pi \partial_r \phi_\ve \cos\tilde\varphi d\tilde\varphi+\ve g_2\quad
\text{on}\  \partial B_R,
\end{equation}
\begin{equation}
\label{NEWmyosin_equations_I}
\partial_t m_\ve= \Delta m_\ve-m_0 \Delta\phi_\ve +\ve f_2
\quad \text{in} \ B_R,
\end{equation}
\begin{equation}
\label{NEWmyosin_bc_I}
\partial_r m_\ve =0 \quad 
%
\text{on}\ \partial B_R,
\end{equation}
together with
\begin{equation}
\label{skorost'}
\dot X_{c,\ve}=\tilde V,\quad \tilde V[\phi_\ve, \rho_\ve]= \frac{1}{
\pi \left(1 +\frac{\ve}{2\pi}\int_{-\pi}^{\pi}\frac{\rho^\prime_{\ve}\sin 2\varphi}{R_0+\ve\rho_{\ve}}d\varphi\right)}
\int_{-\pi}^{\pi}
\frac{\sqrt{\ve^2(\rho^\prime_\ve)^2+(R_0+\ve\rho_\ve)^2}}{R_0+\ve\rho_{\ve}}\partial_r\phi_\ve \cos\varphi d\varphi.
\end{equation} 
The additional term $f_1$  in \eqref{NEWactflow_in_terms_of_phi} appears  when applying the  coordinate change \eqref{ne_hanzawa} to  \eqref{actflow_in_terms_of_phi} and linearizing $p_{\rm eff}(|\tilde\Omega_\ve|)$,
\begin{equation*}
f_1[\tilde\phi_\ve,\rho_\ve]
=\frac{1}{\ve}p_{\rm eff}^\prime(\pi R^2_0)R_0\int_{-\pi}^{\pi}\rho_\ve(\tilde\varphi,t)d\tilde\varphi
-\frac{1}{\ve^2}
\left(
p_{\rm eff}\Bigl[
\int_{-\pi}^\pi (R_0+\ve \rho_\ve(\tilde\varphi,t))^2\frac{d\tilde\varphi}{2}
\Bigr] -p_{\rm eff}(\pi R_0^2)
\right)+
L(\phi_\ve,\rho_\ve),
\end{equation*}
where
\begin{equation*}
\begin{aligned}
 L(\phi_\ve, \rho_\ve)=&2(a_x\cos\varphi+a_y\cos\varphi)\partial^2_{rr}\phi_\ve
+\ve (a_x^2+a_y^2)
\partial^2_{rr} \phi_\ve\\
&+(\cos\varphi \partial_r a_x- b_x \sin\varphi+\frac{1}{r}(\cos\varphi \partial_\varphi a_y-\sin\varphi\partial_\varphi a_x) 
+\sin\varphi \partial_r a_y+ b_y \cos\varphi)\partial_r\phi_\ve\\
&+\ve(a_x\partial_r a_x+b_x\partial_\varphi a_x+a_y\partial_ra_y+b_y\partial_\varphi a_y)\partial_r\phi_\ve\\
&+2(b_x\cos\varphi-a_x\frac{\sin\varphi}{r}+a_y\frac{\cos\varphi}{r}+b_y\sin\varphi) \partial_{r\varphi}\phi_\ve+2\ve(a_xb_x+a_yb_y)\partial_{r\varphi}\phi_\ve\\
&+(a_x\frac{\sin\varphi}{r^2}+\cos\varphi \partial_r b_x-b_x\frac{\cos\varphi}{r}-\frac{\sin\varphi}{r}\partial_\varphi b_x
    -a_y\frac{\cos\varphi}{r^2}+\sin\varphi \partial_r b_y-b_y\frac{\sin\varphi}{r}+\frac{\cos\varphi}{r}\partial_\varphi b_y)
\partial_{\varphi}\phi_\ve\\
&+\ve(a_x\partial_r b_x+b_x\partial_\varphi b_x+a_y\partial_r b_y+b_y\partial_\varphi b_y)\partial_{\varphi}\phi_\ve\\
&+2(b_y\frac{\cos\varphi}{r}-b_x\frac{\sin\varphi}{r})\partial^2_{\varphi\varphi}m_\ve+\ve (b_x^2+b_y^2)\partial^2_{\varphi\varphi}
\phi_\ve,
\end{aligned}
\end{equation*}
with
\begin{equation*}
\begin{aligned}
a_x&=(\cos\varphi(\eta+\partial_\varphi \sigma)+\sin\varphi(\partial_\varphi \eta-\sigma))(\frac{1}{r}-\ve Z)- rZ\cos\varphi,\\
a_y&=(\cos\varphi(\sigma-\partial_\varphi \eta)+\sin\varphi(\eta+\partial_\varphi\sigma))(\frac{1}{r}-\ve Z)- rZ\sin\varphi,\\
b_x&=(\ve Z-\frac{1}{r})(\sin\varphi\partial_r\eta+\cos\varphi\partial_r\sigma)+Z\sin\varphi,\  
b_y=(\frac{1}{r}-\ve Z)(\cos\varphi\partial_r\eta-\sin\varphi\partial_r\sigma)-Z\sin\varphi,
\end{aligned}
\end{equation*}
$$
Z=\frac{\eta+\partial_r \eta +(1+\ve \partial_r \eta)\partial_\varphi \sigma +\ve \partial_r \sigma (\sigma-\partial_\varphi\eta)}
{r(r+\ve(\eta+\partial_r \eta +(1+\ve \partial_r \eta)\partial_\varphi \sigma) +\ve^2 \partial_r \sigma (\sigma-\partial_\varphi\eta))}.
$$

The term $f_2$ in \eqref{NEWmyosin_equations_I} appears when applying change
of variables \eqref{ne_hanzawa} to \eqref{myosin_equations_I},
\begin{equation*}
\begin{aligned}
f_2[m_\ve,\phi_\ve,\rho_\ve]=&L(m_\ve,\rho_\ve)-(m_0+\ve m_\ve)L(\phi_\ve, \rho_\ve)-m_\ve\Delta \phi_\ve
\\
&+
(\tilde V[\phi_\ve,\rho_\ve]+\cos\varphi\partial_t \eta-\sin\varphi\partial_t \sigma)
((\cos\varphi+\ve a_x)\partial_r m_\ve +
(\ve b_x-\frac{\sin\varphi}{r})\partial_\varphi m_\ve)
\\
&+(\sin\varphi\partial_t \eta+\cos\varphi\partial_t \sigma)
((\sin\varphi+\ve a_y)\partial_r m_\ve +
(\ve b_y+\frac{\cos\varphi}{r})\partial_\varphi m_\ve)
\\
&
-\left ((\cos\varphi+\ve a_x)\partial_r m_\ve +
(\ve b_x-\frac{\sin\varphi}{r})\partial_\varphi m_\ve\right)
\left((\cos\varphi+\ve a_x)\partial_r \tilde\phi_\ve +
(\ve b_x-\frac{\sin\varphi}{r})\partial_\varphi \phi_\ve\right)
\\
&
-\left ((\sin\varphi+\ve a_y)\partial_r m_\ve +
(\ve b_y+\frac{\cos\varphi}{r})\partial_\varphi m_\ve\right)
\left((\sin\varphi+\ve a_y)\partial_r \tilde\phi_\ve +
(\ve b_y+\frac{\cos\varphi}{r})\partial_\varphi \phi_\ve\right).
\end{aligned}
\end{equation*}
Also, 
\begin{equation}
\label{funktsiya_g1}
\begin{aligned}
g_1[\rho_\ve]=-\frac{2\gamma(\rho_\ve^\prime)^2}{\zeta{((R_0+\ve\rho_\ve)^2+\ve^2(\rho^\prime_\ve)^2)^{3/2}}}-
\frac{\gamma((\rho_\ve^{\prime\prime}+\rho_\ve)(2\rho_\ve R_0+\ve\rho_\ve^2)-R_0\rho_\ve^2)(R_0+\ve\rho_\ve)}
{\zeta R_0^2{((R_0+\ve\rho_\ve)^2+\ve^2(\rho^\prime_\ve)^2)^{3/2}}}\\
+\frac{\gamma(R_0-\ve\rho_\ve^{\prime\prime}-\ve\rho_\ve)(\rho^\prime_\ve)^2(2(R_0+\ve\rho_\ve)^2+(\ve\rho_\ve^\prime)^2
+
(R_0+\ve\rho_\ve)\sqrt{(R_0+\ve\rho_\ve)^2+(\ve\rho^\prime_\ve)^2})}
{\zeta R_0^2(R_0+\ve\rho_\ve+\sqrt{(R_0+\ve\rho_\ve)^2+(\ve\rho^\prime_\ve)^2}){((R_0+\ve\rho_\ve)^2+\ve^2(\rho^\prime_\ve)^2)^{3/2}}}
\end{aligned}
\end{equation}
\begin{equation}
\label{funktsiya_g2}
\begin{aligned}
g_2[\phi_\ve,\rho_\ve]=&\frac{\ve(\rho_\ve^\prime)^2}{\Bigl(R_0+\ve\rho_\ve+{\sqrt{\ve^2(\rho^\prime_\ve)^2+(R_0+\ve\rho_\ve)^2}}\Bigr)
{(R_0+\ve\rho_\ve)}}\partial_r\phi_\ve
\\
&+
 \frac{ \int_{-\pi}^\pi \partial_r \phi_\ve \cos\tilde\varphi d\tilde\varphi}
{
\pi \left(1 +\frac{\ve}{2\pi}\int_{-\pi}^{\pi}\frac{\rho^\prime_{\ve}\sin 2\tilde\varphi}{R_0+\ve\rho_{\ve}}d\tilde\varphi\right)}
\left[\frac{\cos\varphi}{2\pi}\int_{-\pi}^{\pi}\frac{\rho^\prime_{\ve}\sin 2\tilde\varphi}{R_0+\ve\rho_{\ve}}d\tilde\varphi
-\frac{\rho^\prime_{\ve}\sin \varphi}{R_0+\ve\rho_{\ve}}\right]
\\
&-\ve 
 \frac{\cos\varphi +\frac{\rho^\prime_{\ve}\sin \varphi}{R_0+\ve\rho_{\ve}}}
{
\pi \left(1 +\frac{\ve}{2\pi}\int_{-\pi}^{\pi}\frac{\rho^\prime_{\ve}\sin 2\tilde\varphi}{R_0+\ve\rho_{\ve}}d\tilde\varphi\right)}
\int_{-\pi}^\pi\frac{(\rho_\ve^\prime)^2\partial_r\phi_\ve \cos\tilde\varphi d\tilde\varphi}{\Bigl(R_0+\ve\rho_\ve+{\sqrt{\ve^2(\rho^\prime_\ve)^2+(R_0+\ve\rho_\ve)^2}}\Bigr)
{(R_0+\ve\rho_\ve)}}.
\end{aligned}
\end{equation}
%
The nonlinear terms $f_1$, $f_2$, $g_1$, $g_2$ in system \eqref{NEWactflow_in_terms_of_phi}--\eqref{skorost'} contain higher order derivatives, that is why regularity result (iii) in Lemma \ref{Dlinnaya_i_pechalnaya_lemma} is crucial for the solvability
of this system.

The solvability of  \eqref{NEWactflow_in_terms_of_phi}--\eqref{skorost'} is shown iteratively via the contraction mapping theorem. Namely 
in the initial step we solve \eqref{NEWactflow_in_terms_of_phi}--\eqref{NEWmyosin_bc_I} with given initial data and 
$f_1=f_2=0$, $g_1=g_2=0$ to obtain the first iteration 
$(m_{\ve,0},\rho_{\ve,0})=e^{\mathcal{A}t}(m_\ve(r,\varphi,0),
\rho_\ve(\varphi,0))$, $\phi_{\ve,0}=S_{\phi}(m_{\ve,0},\rho_{\ve,0})$. Without loss of generality we can assume that 
the function $\rho_\ve(\varphi,0)$, which determines the initial shape, is orthogonal to $\cos\varphi$, for otherwise one modifies appropriately 
the initial  position of the center $X_{c,\ve}(0)$ of the reference steady state. Then semigroup $e^{\mathcal{A}t}$ is well defined for such initial data. 

Next introduce 
new unknowns 
$\mu_\ve:=m_\ve-m_{\ve,0}$,  $\varrho_\ve:=\rho_\ve -\rho_{\ve,0}$ 
 and represent $\phi_\ve$ as $\phi_\ve=\psi_\ve+\phi_{\ve,0}+\ve\overline\psi_\ve$, where $\overline\psi_\ve=
\overline\psi_\ve[\psi_{\ve},\varrho_{\ve},\psi_{\ve,0},\rho_{\ve,0}]$ solves
\begin{equation}
\label{vspomog_actin}
\Delta\overline {\psi}_\ve=\zeta \overline{\psi}_\ve 
 -f_1[\psi_\ve+\phi_{\ve,0}+\ve\overline\psi_\ve,\varrho_\ve+\rho_{\ve,0}]\ \quad\text{in}\  B_R,
\end{equation}
\begin{equation}\label{vspomogbc1}
\overline{\psi}_\ve=  g_1[\varrho_\ve+\rho_{\ve,0}]
\quad\text{on}\ \partial B_R,
\end{equation}
to rewrite 
equations \eqref{NEWactflow_in_terms_of_phi}--\eqref{NEWmyosin_bc_I} in the form
\begin{equation}
\label{strasnenkii_actin}
\Delta\psi_\ve +\mu_\ve=\zeta \psi_\ve +p_{\rm eff}^\prime(\pi R^2_0)R_0\int_{-\pi}^{\pi}\varrho_\ve(\tilde\varphi)d\tilde\varphi 
\quad\text{in}\  B_R,
\end{equation}
\begin{equation}\label{strasnenkiibc1}
\psi_\ve=\frac{\gamma}{R^2\zeta}(\varrho^{\prime\prime}_\ve+\varrho_\ve) 
\quad\text{on}\ \partial B_R,
\end{equation}
\begin{equation}\label{strasnenkii_bc_normal}
\begin{aligned}
\partial_t\varrho_\ve =\partial_r \psi_\ve-\frac{\cos\varphi}{\pi} \int_{-\pi}^\pi \partial_r \psi_\ve \cos\tilde\varphi d\tilde\varphi+\ve
\Bigl(\partial_r \overline\psi_\ve&-\frac{\cos\varphi}{\pi} \int_{-\pi}^\pi \partial_r \overline\psi_\ve \cos\tilde\varphi d\tilde\varphi 
\Bigr.\\
&+\Bigl. g_2[\psi_\ve
+\phi_{\ve,0}+\ve\overline\psi_\ve,\varrho_\ve+\ve\rho_{0,\ve}]\Bigr)
\end{aligned}
 \quad
\text{on}\  \partial B_R,
\end{equation}
\begin{equation}
\label{strasnenkii_myosin}
\partial_t \mu_\ve= \Delta \mu_\ve-m_0 \Delta \psi_\ve +
\ve 
(
f_2[\mu_\ve+m_{\ve,0},\psi_\ve+\phi_{\ve,0}+\ve\overline\psi_\ve,\varrho_\ve+\rho_{\ve,0}]
- 
m_0\Delta\overline\psi_\ve
)
\quad \text{in} \ B_R,
\end{equation}
\begin{equation}
\label{strasnenkii_myobc}
\partial_r \mu_\ve =0 \quad 
%
\text{on}\ \partial B_R.
\end{equation}
Thus by Duhamel's formula we have 
\begin{equation}
\label{Banach_fix_point}
(\mu_\ve,\varrho_\ve)=\ve \int_0^t e^{\mathcal{A}(t-\tau)}
(\tilde f_2(\mu_\ve,\varrho_\ve), \tilde g_2 ( \mu_\ve, \varrho_\ve))d\tau=:G_\ve(\mu_\ve,\varrho_\ve),
\end{equation}
where 
$$
\tilde g_2 =  g_2[\psi_\ve
+\phi_{\ve,0}+\ve\overline\psi_\ve,\varrho_\ve+\rho_{\ve,0}]+ 
\partial_r \overline \psi_\ve -\frac{\cos\varphi}{\pi} \int_{-\pi}^\pi \partial_r \overline\psi_\ve \cos\tilde\varphi d\tilde\varphi,
$$
$$
\tilde f_2= f_2[\mu_\ve+m_{\ve,0},\psi_\ve+\phi_{\ve,0}+\ve\overline\psi_\ve,\varrho_\ve+\rho_{\ve,0}]
- 
m_0\Delta\overline\psi_\ve,
$$
and $\psi_\ve=S_{\phi}(\mu_\ve, \varrho_\ve)$ in the definition \eqref{vspomog_actin}--\eqref{vspomogbc1} of  $\overline{\psi}_\ve$. 
The fixed point problem \eqref{Banach_fix_point} is considered in the space 
\begin{equation}
\label{prostir}
\begin{aligned}
Y=\Bigl\{&
(\mu,\varrho)\in L^\infty([0,T];\, H^1(B_{R_0}))\times L^\infty([0,T];\, H^4_{\rm per}(-\pi,\pi)\setminus\{\cos\varphi \});\Bigr.
\\
& (\mu,\varrho)\in  L^2([0,T];\, H^2_{N}(B_{R_0}))\times L^2([0,T];\, H^{11/2}_{\rm per}(-\pi,\pi)))
\\
&\Bigl.\partial_t (\mu,\varrho)\in  L^2([0,T];\, L^2(B_{R_0}))\times L^2([0,T];\, H^{5/2}_{\rm per}(-\pi,\pi))),\,  (\mu,\varrho)|_{t=0}=0\Bigr\},
\end{aligned}
\end{equation}
endowed with the norm
$
\|(\mu,\varrho)\|_Y^2=\|(\mu,\varrho)\|_{1}^2+\|(\mu,\varrho)\|_{2}^2+\|\partial_t(\mu,\varrho)\|_{3}^2
$, where $\|\,\cdot\,\|_{1}$,$\|\,\cdot\,\|_{2}$ and $\|\,\cdot\,\|_{3}$ denote norms in 
$ L^\infty([0,T];\, H^1(B_{R_0}))\times L^\infty([0,T];\, H^4(-\pi,\pi)\})$,
$L^2([0,T];\, H^2_{N}(B_{R_0}))\times L^2([0,T];\, H^{11/2}(-\pi,\pi)))$ and
$ L^2([0,T];\, L^2(B_{R_0}))\times L^2([0,T];\, H^{5/2}(-\pi,\pi)))$, respectively, $ H^2_{N}(B_{R_0})$
 is the subspace of the Sobolev space $H^2(B_{R_0})$ of functions $\mu$ satisfying $\partial_r \mu=0$ on  $\partial B_{R_0}$. 

By Lemma \ref{Dlinnaya_i_pechalnaya_lemma} and Corollary \ref{cor_pro_l_infty_bounds} we have the following bound 
\begin{equation*}
 \|(m_{\ve,0},\rho_{\ve,0})\|_{Y}\leq C_0 I_{0,\ve},
\end{equation*}
where 
$$
I_{0,\ve}:=\|(m_{\ve,0},\rho_{\ve,0})\|_{H^2(B_{R_0})\times H^4(-\pi,\pi)}\bigl|_{t=0}\leq 1,
$$
and $C_0$ is independent of $T$. Consider the set 
\begin{equation*}
 E=\{(\mu,\varrho)\in Y;\, \|(\mu,\varrho)\|_{Y}\leq 2C_0 I_{0,\ve}\}.
\end{equation*}
We next show that $G_\ve$ defined in \eqref{Banach_fix_point} maps the set $E$ into itself for sufficiently small $\ve$, moreover
\begin{equation}
\label{okonchat_pobeda}
\|G_\ve(\mu,\varrho)\|_{Y}\leq \ve C(1+T) I_{0,\ve}, \quad \forall (\mu,\varrho)\in E,
\end{equation}
where $C$ is independent of $T$ and $I_{0,\ve}$. 
 To this end 
observe that  the mappings $g_1[\rho_\ve]$, $g_2[\phi_\ve, \rho_\ve]$ and $f_1[\rho_\ve,\phi_\ve]$
have the following pointwise in $t\in[0,T]$ bounds 
$$
\|g_1[\rho_\ve]\|_{H^{k-2}(-\pi,\pi)}\leq C\|\rho_\ve\|_{H^{k}(-\pi,\pi)},\quad k=4,\, 11/2
$$
$$
\|g_2[\phi_\ve, \rho_\ve]\|_{H^{5/2}(-\pi,\pi)}\leq C\|\phi_\ve\|_{H^4(B_{R_0})},
$$
$$
\|f_1[\phi_\ve,\rho_\ve]\|_{H^{k-2}(B_{R_0})}\leq C\left(\|\phi_\ve\|_{H^{k}(B_{R_0})}+\|\rho_\ve\|_{H^{4}(-\pi,\pi)}\right),
\quad k=2,\, 5/2 
$$
$$
\begin{aligned}
\|f_2[m_\ve,\phi_\ve,\rho_\ve]\|_{L^2(B_{R_0})}\leq& C \Bigl(\|m_\ve\|_{H^2(B_{R_0})}+\|\phi_\ve\|_{H^{5/2}(B_{R_0})}\Bigr)
\end{aligned}
$$
and the integral bound
$$
\begin{aligned}
\|f_1[\phi_\ve,\rho_\ve]\|^2_{L^2(0,T; H^2(B_{R_0}))}\leq C &\Bigl(\|\phi_\ve\|^2_{L^2(0,T; H^4(B_{R_0}))}\Bigr.
\\
&+\Bigl.\|\rho_\ve\|_{L^2(0,T;H^{5}(-\pi,\pi))}^2\left(1+\|\phi_\ve\|^2_{L^\infty(0,T; H^{5/2}(B_{R_0}))}\right)\Bigr)
\end{aligned}
$$
for $(m_\ve,\rho_\ve)\in E$ and $\ve\leq \ve_0$ (with some $\ve_0>0$). Then \eqref{okonchat_pobeda} follows by 
Lemma \ref{Dlinnaya_i_pechalnaya_lemma} and Corollary \ref{cor_pro_l_infty_bounds}. 
Also, one checks that $G_\ve$ is Lipschitz continuous with Lipschitz constant less than one for sufficiently small $\ve$. Thus there 
exists the unique fixed point $(\mu_\ve,\varrho_\ve)$ and we have 
$$
(m_\ve,\rho_\ve)=C_{0,\ve} U_1/\Pi + \tilde U_\ve+(\mu_\ve,\varrho_\ve),
$$
where $|C_{0,\ve}|\leq CI_{0,\ve}$ and
 $\|\tilde U_\ve\|_{H^2(B_{R_0})\times H^4(-\pi,\pi)}\leq C_1 e^{-\theta t} I_{0,\ve}$. Here $\theta>0$ is constant appearing in \eqref{exp_decay_bis}. Thus there is 
$\frac{1}{\theta}\log\frac{1}{\ve} \leq T^\ast\leq \frac{1}{\theta}\log\frac{1}{\ve}+1 $ such that at $t=T^\ast$
$$
\|\tilde U_\ve+(\mu_\ve,\varrho_\ve)\|_{H^2(B_{R_0})\times H^4(-\pi,\pi)}\leq C \ve I_{0,\ve}\log \frac{1}{\ve}  
$$
On the other hand,  due 
to myosin preservation property we have 
\begin{eqnarray}
\nonumber&&\int_{\Omega_\ve(0)} m(x,y,0)dxdy=\int_{\Omega_\ve(t)} m(x,y,t)dxdy \\
\nonumber&&\hspace{50pt}=\int_{0}^{R_0}\int_{-\pi}^\pi \left(m_0+\ve m_\ve(r,\varphi,t)\right)J_\ve(r,\varphi)d\varphi dr
\\   
\nonumber&&\hspace{50pt}=|\Omega_\ve(t)|\left(m_0+\ve (\frac{\gamma}{R_0}+2\pi R_0 p'_h(\pi R_0^2))\frac{C_{0,\ve}}{\Pi}\right)+\ve\int_{0}^{R_0}\int_{-\pi}^\pi(\tilde{m}_{\ve,0}+\mu_\ve) J_\ve d\varphi dr
\\
\nonumber&&\hspace{50pt}=m_0\left(\pi R_0^2+2\pi R_0\ve \frac{C_{0,\ve}}{\Pi}\right)+\ve\left(\frac{\gamma}{R_0}+2\pi R_0p_{\rm eff}'(\pi R_0^2)\right)\frac{C_{0,\ve}}{\Pi}\pi R_0^2+O(\ve^2 I_{0,\ve}\log \frac{1}{\ve})
\\
\nonumber&&\hspace{50pt}= m_0(\pi R_0^2 +  \ve C_{0,\ve})
+O(\ve^2 I_{0,\ve}\log \frac{1}{\ve}).
%
\end{eqnarray} 
In equalities above, $\tilde{m}_{\ve,0}$ is the $m$-component of $\tilde{U}_{\ve}$ and we used the following expression for Jacobian: 
\begin{equation*}
J_\ve=
(1+\ve \partial_r\eta)(r+\ve \eta)+\ve \partial_{\varphi}\sigma(1+\ve \partial_r\eta)+\ve^2 \sigma \partial_{r}\sigma-\ve^2 \partial_r \sigma \partial_\varphi\eta=r+\ve \rho_{\ve,0}\partial_r (r\chi(r))+O\left(\ve^2I_{0,\ve}\log \frac{1}{\ve}\right). 
\end{equation*}

Since $\int_{\Omega_\ve(0)} m(x,y,0)dxdy=\pi m_0 R_0^2$, we get $C_{0,\ve}=O(\ve I_{0,\ve}\log \frac{1}{\ve})$. Thus, for sufficiently small $\ve$,  $\|(m_\ve,\rho_\ve)\|_{H^2(B_{R_0})\times H^4(-\pi,\pi)}\bigl|_{t=T^\ast}<\sqrt{\ve}\|(m_\ve,\rho_\ve)\|_{H^2(B_{R_0})\times H^4(-\pi,\pi)}|_{t=0}$. Applying this result iteratively we establish exponential decay of the solution as 
$t\to \infty$.
\end{proof}

\section*{Appendix}
\label{sec:appedix}

\begin{prop}\label{lem:ineq_for_m}
Consider $m\in H^1(B_R)$ such that it satisfies Neumann boundary condition \eqref{eig5} and $\langle m \cos \varphi \rangle=0$, where 
$\langle v\rangle:=\frac{1}{\pi R^2}\int_{B_R}v\,dxdy$. Then 
\begin{equation}\label{ineq_for_m}
\int_{B_R}|\nabla m|^2\, dxdy - m_0\int_{B_R}|m|^2 \,dxdy \geq -m_0\pi R^2|\langle m\rangle|^2
\end{equation}
for any $m_0$ which is less or equal to the third eigenvalue of the operator $-\Delta$ in $B_R$ with the Neumann boundary condition on 
$\partial B_R$.
\end{prop}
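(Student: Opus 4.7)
My plan is a Fourier decomposition argument exploiting the paper's global symmetry assumption ($m$ even in $y$), which writes $m(r,\varphi) = \sum_{n\geq 0} A_n(r)\cos n\varphi$. First I rewrite the target inequality in the equivalent form $\int_{B_R}|\nabla m|^2\,dxdy \geq m_0\int_{B_R}(m-\langle m\rangle)^2\,dxdy$ (using $\int m^2 - \pi R^2\langle m\rangle^2 = \int(m-\langle m\rangle)^2$). Distinct Fourier modes being orthogonal in both the Dirichlet and $L^2$ inner products, the estimate decouples into a per-mode Rayleigh bound for the radial operator $L_n = -\partial_r^2 - r^{-1}\partial_r + n^2/r^2$ on $L^2(r\,dr)$ with Neumann condition $f'(R)=0$, whose spectrum is $\{(\alpha_{n,j}/R)^2\}_{j\geq 1}$, with $\alpha_{n,j}$ the $j$-th positive zero of $J_n'$ (plus $\alpha_{0,0}=0$ for the constant).

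For $n=0$: after the mean is subtracted, $A_0-\langle m\rangle$ is orthogonal to the constant eigenfunction, so its Rayleigh quotient is at least $(\alpha_{0,1}/R)^2$; and $(\alpha_{0,1}/R)^2 > \lambda_3 = (\alpha_{2,1}/R)^2 \geq m_0$ since $\alpha_{0,1}\approx 3.83 > \alpha_{2,1}\approx 3.05$. For $n\geq 2$: the first radial eigenvalue is $(\alpha_{n,1}/R)^2 \geq (\alpha_{2,1}/R)^2 = \lambda_3 \geq m_0$, so the bound is immediate. Already these two cases cover every invocation of the proposition in the main text, where $m$ is a pure mode with $n=0$ (radial) or $n\geq 2$ and the constraint $\langle m\cos\varphi\rangle = 0$ is trivially satisfied.

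The delicate case is $n=1$, where the bare first eigenvalue $(\alpha_{1,1}/R)^2$ is strictly below $\lambda_3$ and the hypothesis $\langle m\cos\varphi\rangle = 0$ must be brought in. After angular integration it becomes $\int_0^R r A_1(r)\,dr=0$, i.e.\ $L^2(r\,dr)$-orthogonality of $A_1$ to the constant $1$. Expanding $A_1 = \sum_j a_j J_1(\alpha_{1,j}r/R)$ in the $L_1$-eigenbasis, the constraint reads $\sum_j a_j \beta_j = 0$ with $\beta_j:=\int_0^R r J_1(\alpha_{1,j}r/R)\,dr$, and a Lagrange multiplier argument identifies the constrained Rayleigh minimum $\mu^\ast$ as the smallest root of the secular equation $\sum_j \beta_j^2/\bigl(c_j(\lambda_{1,j}-\mu)\bigr)=0$, with $c_j=\|J_1(\alpha_{1,j}r/R)\|_{L^2(r\,dr)}^2$ and $\lambda_{1,j}=(\alpha_{1,j}/R)^2$.

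The main obstacle will be showing $\mu^\ast \geq \lambda_3$. The plan is to evaluate the secular sum at $\mu=\lambda_3$ and show it is non-positive; since that function is increasing on $(\lambda_{1,1},\lambda_{1,2})$ where $\mu^\ast$ must lie, a non-positive value at $\lambda_3$ forces $\mu^\ast \geq \lambda_3$. The dominant (negative) contribution comes from $j=1$, and the positive tail $\sum_{j\geq 2}\beta_j^2/(c_j(\lambda_{1,j}-\lambda_3))$ is controlled via Parseval $\sum_j \beta_j^2/c_j = \int_0^R r\,dr = R^2/2$ (the expansion of $1$ in the $L_1$-eigenbasis) together with the uniform bound $\lambda_{1,j}-\lambda_3 \geq \lambda_{1,2}-\lambda_3 > 0$ for $j\geq 2$. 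The closed-form expressions for $\beta_j$ and $c_j$ in terms of $J_0, J_1$ at $\alpha_{1,j}$, combined with the Neumann relation $J_0(\alpha_{1,j}) = J_1(\alpha_{1,j})/\alpha_{1,j}$, reduce this to a concrete numerical inequality about Bessel zeros.
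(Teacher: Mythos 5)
Your mode-by-mode Rayleigh reduction is a genuinely different route from the paper's. The paper first identifies the second Neumann eigenvalue as belonging to $n=1$ by a comparison trick (if a radial $\hat m(r)$ achieved $\lambda_2^{(N)}$, then $\hat m'(r)\cos\varphi$ would be an $n=1$ \emph{Dirichlet} eigenfunction at the same value, and dropping the boundary constraint strictly lowers the $n=1$ Neumann minimum, a contradiction), and then simply asserts the Rayleigh bound over $\{m\perp 1,\cos\varphi\}$. You instead decompose $m$ angularly from the start, make $\lambda_3=(\alpha_{2,1}/R)^2$ explicit, and dispatch $n=0$ and $n\ge 2$ by directly comparing Bessel zeros ($\alpha_{0,1}=j_{1,1}\approx 3.83>\alpha_{2,1}\approx 3.05$). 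This is cleaner and fully rigorous for exactly the two cases the paper actually invokes.

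You also correctly isolate where both arguments are incomplete: $n=1$. The hypothesis $\langle m\cos\varphi\rangle=0$ only says the $n=1$ radial profile $A_1$ is $L^2(r\,dr)$-orthogonal to the constant $1$, \emph{not} to the first $n=1$ eigenfunction $J_1(\alpha_{1,1}r/R)$; and orthogonality to a non-eigenfunction carries no soft Courant--Fischer guarantee that the constrained Rayleigh minimum reaches $\lambda_3$ (indeed the max--min characterization only gives that the minimum over any codimension-two constraint set is $\le\lambda_3$, with equality a fact that must be checked for this particular pair). Your secular-equation reduction $\sum_j\beta_j^2/\bigl(c_j(\lambda_{1,j}-\mu)\bigr)=0$ and the plan of showing the sum is nonpositive at $\mu=\lambda_3$ is exactly the right tool, and the Parseval tail estimate does push it through numerically, but as written you stop short of carrying the estimate out. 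For comparison, the paper's own proof does not treat this step at all: after identifying $\lambda_2^{(N)}$ with the $n=1$ sector it writes ``It follows, in particular, that\dots'' with no handling of $n=1$ profiles orthogonal merely to $1$ rather than to $J_1(\alpha_{1,1}\cdot)$. As you note, this is harmless for the paper since the Proposition is only ever applied to pure $n=0$ or $n\ge2$ modes; but as a proof of the Proposition in its stated generality, the $n=1$ Bessel inequality still needs to be verified, in your write-up as in the paper's.
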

\begin{proof}
Similar to operator $\mathcal{A}$, eigenvectors for operator $-\Delta$ with Neumann boundary condition are of the form $m=\hat{m}(r)\cos(n\varphi)$ for integer $n\geq 0$, and for each $n\geq 0$ there are infinitely many eigenvalues. The first (minimal) eigenvalue of $-\Delta$ with Neumann boundary condition is $\lambda_1^{(N)}=0$ and the corresponding eigenvector $m$ is a constant ($n=0$ and $\hat{m}(r)\equiv \text{const}$). Let us show that the second eigenvalue $\lambda_2^{(N)}$ corresponds to an eigenvector $m$ of the
 form $m=\hat{m}(r)\cos (\varphi)$ ($n=1$).

First, we note that 
\begin{equation}
\lambda_2^{(N)}=\inf_{\footnotesize\begin{array}{c}m\in H^1(B_R)\\ \langle m\rangle =0\end{array}}
\dfrac{\int_{B_R}|\nabla m|^2\,dxdy}{\int_{B_R}|m|^2\,dxdy}=\inf_{\footnotesize\begin{array}{c}m=\hat{m}(r)\cos (n\varphi)\\ \langle m\rangle =0\end{array}}
\dfrac{\int\limits_0^R\left(|\hat{m}'|^2+\frac{n^2}{r^2}|\hat{m}|^2\right)r\,dr}{\int_{0}^{R}|\tilde{m}|^2r\,dr},\label{eig_34}
\end{equation}  
 where the second equality holds for some integer $n\geq 0$ and we aim to show that $n=1$ in \eqref{eig_34}. Indeed, since $\langle m\rangle=0$ for all $m$ of the form $m=\hat{m}(r)\cos (n\varphi)$ with $n\geq 1$, the minimum of fraction in the right hand side of \eqref{eig_34} among $n\geq 1$ is attained at $n=1$. Thus, $n=0$ or $n=1$. Assume that $n=0$. Then  the corresponding eigenfunction $m$ is of the form $m=\hat{m}(r)$. By straightforward calculations one shows that $u:=\hat{m}'(r)\cos \varphi$ is an eigenfunction of operator $-\Delta$ with Dirichlet boundary conditions for eigenvalue $\lambda^{(N)}_2$ and thus
 \begin{equation*}
 \lambda_2^{(N)}= \dfrac{\int_{B_R}|\nabla u|^2\,dxdy}{\int_{B_R}|u|^2\,dxdy}.
 \end{equation*}  
 Denoting by $\lambda^{*}$ minimal eigenvalue for operator $-\Delta$ with Neumann boundary condition corresponding to $n=1$, we obtain 
 \begin{equation*}
 \lambda^*=\inf_{\footnotesize m=\hat{m}(r)\cos (\varphi)}\dfrac{\int_{B_R}|\nabla m|^2\,dxdy}{\int_{B_R}|m|^2\,dxdy}<  
 \dfrac{\int_{B_R}|\nabla u|^2\,dxdy}{\int_{B_R}|u|^2\,dxdy}=\lambda^{(N)}_2,
 \end{equation*} 
where the strict inequality follows from the fact that $u=0$ on $\partial B_R$, a contradiction.     
 It follows, in particular, that for all $m\in H^1(B_R)$ such that $\langle m \rangle = \langle m \cos \varphi\rangle=0$ we have 
 \begin{equation}\label{ineq_for_m_ave}
 \int_{B_R}|\nabla m|^2 \, \text{d}x\text{d}y \geq \lambda^{(N)}_{3} \int_{B_R} |m|^2 \,dxdy\geq  m_0 \int_{B_R} |m|^2 \,dxdy.
 \end{equation} 
Now take an arbitrary $m\in H^1(B_R)$ such that $\langle m \cos \varphi\rangle=0$
and apply \eqref{ineq_for_m_ave} for $m-\langle m\rangle$:
\begin{equation*}
\int_{B_R}|\nabla m|^2 \, \text{d}x\text{d}y \geq  m_0 \int_{B_R} |m-\langle m\rangle|^2 \,dxdy=m_0 \int_{B_R} |m|^2\,dxdy-m_0\pi R^2\langle m\rangle^2.
\end{equation*}
Thus, \eqref{ineq_for_m} is proved. 
\end{proof}

\end{document}